\newtheorem{thm}{Theorem} % 定理は \begin{thm} \end{thm}
\newtheorem{lem}[thm]{Lemma} % 補題 \begin{lem} \end{lem}
\newtheorem{cor}[thm]{Corollary}
\newtheorem{prop}[thm]{Proposition}
\newtheorem{conj}[thm]{Conjecture}
\theoremstyle{definition}
\newtheorem{rmk}[thm]{Remark}
\newtheorem{defn}[thm]{Definition}
\numberwithin{thm}{section}
\numberwithin{equation}{section}
\newcommand{\C}{\mathbb{C}}
\newcommand{\F}{\mathbb{F}}
\newcommand{\Fbar}{\overline{F}}
\newcommand{\Q}{\mathbb{Q}}
\newcommand{\Qb}{\overline{\mathbb{Q}}}
\newcommand{\Kb}{\overline{K}}
\newcommand{\kb}{\overline{k}}
\newcommand{\Fsep}{F^{\text{sep}}}
\newcommand{\rhob}{\overline{\rho}}
\newcommand{\chib}{\overline{\chi}}
\newcommand{\Fab}{F^{\text{ab}}}
\newcommand{\Gab}{\mathrm{G}^{\text{ab}}}
\newcommand{\lambdaab}{\lambda^{\text{ab}}}
\newcommand{\trab}{\mathrm{tr}^{\text{ab}}}
\newcommand{\A}{\mathbb{A}}
\newcommand{\kA}{k_{\A}}
\newcommand{\KA}{K_{\A}}
\newcommand{\R}{\mathbb{R}}
\newcommand{\Z}{\mathbb{Z}}
\newcommand{\pP}{\mathbb{P}}
\newcommand{\betab}{\overline{\beta}}
\newcommand{\disc}{\mathrm{disc}\, }
\newcommand{\mfa}{\mathfrak{a}}
\newcommand{\mfp}{\mathfrak{p}}
\newcommand{\mfq}{\mathfrak{q}}
\newcommand{\mfI}{\mathfrak{I}}
\newcommand{\mfP}{\mathfrak{P}}
\newcommand{\mfS}{\mathfrak{S}}
\newcommand{\mfl}{\mathfrak{l}}
\newcommand{\ktil}{\widetilde{k}}
\newcommand{\sA}{\mathscr{A}}
\newcommand{\h}{\mathbb{H}}
\newcommand{\cP}{\mathcal{P}}
\newcommand{\cO}{\mathcal{O}}
\newcommand{\cQM}{\mathcal{QM}}
\newcommand{\cM}{\mathcal{M}}
\newcommand{\cN}{\mathcal{N}}
\newcommand{\cFR}{\mathcal{FR}}
\newcommand{\cK}{\mathcal{K}}
\newcommand{\cS}{\mathcal{S}}
\newcommand{\cT}{\mathcal{T}}
\newcommand{\Ram}{\mathbf{Ram}}
\newcommand{\id}{\mathrm{id}}
\newcommand{\pmu}{\pmb{\mu}}
\newcommand{\Frob}{\mathrm{Frob}}
\newcommand{\Gal}{\mathrm{Gal}}
\newcommand{\G}{\mathrm{G}}
\newcommand{\M}{\mathrm{M}}
\newcommand{\N}{\mathrm{N}}
\newcommand{\GL}{\mathrm{GL}}
\newcommand{\GSp}{\mathrm{GSp}}
\newcommand{\SL}{\mathrm{SL}}
\newcommand{\Hom}{\mathrm{Hom}}
\newcommand{\End}{\mathrm{End}}
\newcommand{\Aut}{\mathrm{Aut}}
\newcommand{\im}{\mathrm{Im}\, }
\newcommand{\Norm}{\mathrm{Norm}}
\newcommand{\Nrd}{\mathrm{Nrd}}
\newcommand{\ord}{\mathrm{ord}\, }
\newcommand{\tr}{\mathrm{tr}}
\newcommand{\Spec}{\mathrm{Spec}}
\newcommand{\Br}{\mathrm{Br}}
\newcommand{\cf}{cf.\ }
\newcommand{\inj}{\hookrightarrow}
\newcommand{\resp}{resp.\ }
\newcommand{\ch}{\mathrm{char}\,}
\begin{document}

\title{Algebraic points on Shimura curves of $\Gamma_0(p)$-type}
\author{Keisuke Arai and Fumiyuki Momose}
\date{}
%\address[Keisuke Arai]
%{Department of Mathematics, School of Engineering,
%Tokyo Denki University,
%2-2 Kanda-Nishiki-cho, Chiyoda-ku, Tokyo, Japan 
%101-8457}
%\email{araik@mail.dendai.ac.jp}

%\address[Fumiyuki Momose]
%{Department of Mathematics, Faculty of Science and Engineering,
%Chuo University,
%1-13-27 Kasuga, Bunkyo-ku, Tokyo 112-8551, Japan}
%\email{momose@math.chuo-u.ac.jp}

%\date{}

%\pagestyle{plain}
%
%

%
%

%\subjclass[2010]{Primary 11G18, 14G05; Secondary 11G10, 11G15}

%\keywords{rational points, Shimura curves, QM-abelian surfaces}

\maketitle

\begin{center}
\textit{To the memory of Fumiyuki Momose}
\end{center}

\begin{abstract}

%In this article we study algebraic points on Shimura curves
%of $\Gamma_0(p)$-type.
In this article,
we classify the characters associated to algebraic points
on Shimura curves of $\Gamma_0(p)$-type, and over a quadratic field
we show that
there are at most elliptic points on such a Shimura curve
for every sufficiently large prime number $p$.
This is an analogue of the study of rational points or points over a quadratic field
on the modular curve $X_0(p)$ by Mazur and one of the author (Momose).
We also apply the result to a finiteness conjecture on abelian varieties
with constrained prime power torsion by Rasmussen-Tamagawa.
%As an application we show the irreducibility of the mod-$p$ Galois representation
%associated to a QM-abelian surface, together with some finiteness on abelian varieties.

\end{abstract}

\noindent
2010 \textit{Mathematics Subject Classification.}
Primary 11G18, 14G05; Secondary 11G10, 11G15.

\tableofcontents

\section{Introduction}
\label{intro}

%To the memory of Professor Fumiyuki Momose.

A Shimura curve can be considered to be an analogue of a modular curve.
In this context we study points on Shimura curves, and show that
there are few points over quadratic fields on Shimura curves of
``$\Gamma_0(p)$-type" just as in the case of the modular curve
$X_0(p)$ defined below.
% classifying
%elliptic curves with a cyclic subgroup of prime order.
We also get an irreducibility result of the mod $p$ Galois representations
associated to abelian surfaces with quaternionic multiplication,
which we apply to a finiteness conjecture on abelian varieties.

%Let $X_0(1)$ be the smooth compactification of the coarse moduli scheme over $\Q$
%parameterizing isomorphism classes of elliptic curves.
For an integer $N\geq 1$,
let $X_0(N)$ be the smooth compactification of the coarse moduli scheme over $\Q$
parameterizing isomorphism classes of
pairs $(E,C)$ where $E$ is an elliptic curve and $C$ is a cyclic subgroup of $E$ of order $N$.
We know that $X_0(N)$ is a proper smooth curve over $\Q$
(\cf \cite[Th\'{e}or\`{e}me 3.4, p.212]{DR})
and that $X_0(1)$ is isomorphic to the projective line
$\pP_{\Q}^1$ (\cf \cite[Th\'{e}or\`{e}me 1.1, p.267]{DR}).
When $N$ is a prime number $p$,
Mazur and one of the authors studied points on the modular curve $X_0(p)$.
%In this article we consider the Shimura curve ``$M_0^B(p)$", an analogue of
%$X_0(p)$, and study points on it.

\begin{thm}
\label{X0p}

(1) {\rm \cite[Theorem 7.1, p.153]{Ma}}
For a prime number $p$, we have $X_0(p)(\Q)=\{\text{cusps}\}$
if and only if $p\not\in\{2,3,5,7,11,13,17,19,37,43,67,163\}$.

(2) {\rm \cite[Theorem B, p.330]{Mo}}
Let $k$ be a quadratic field which is not an imaginary quadratic field of
class number one.
Then there is a finite set $\mfS(k)$ of prime numbers depending on $k$
such that $X_0(p)(k)=\{\text{cusps}\}$ holds for every prime number $p\not\in\mfS(k)$.

\end{thm}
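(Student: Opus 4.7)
Both parts follow the strategy initiated by Mazur of passing from the curve $X_0(p)$ to a carefully chosen quotient of its Jacobian $J_0(p)$ on which the Mordell--Weil group over the base field is tightly controlled. Given a non-cuspidal $k$-rational point $x$ (with $k=\Q$ in (1) and $k$ a quadratic field in (2)), one forms a degree-zero divisor $D_x$ supported on $x$ (and its Galois conjugates) together with the cusps $0$ and $\infty$; the plan is to show that the class of $D_x$ in $J_0(p)(k)$ is torsion, and in fact dies in a well-chosen quotient, which in turn forces $x$ to coincide with a cusp.

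For (1), the central object is Mazur's \emph{Eisenstein quotient} $J_0(p)^{\mathrm{eis}} = J_0(p)/IJ_0(p)$, where $I$ denotes the Eisenstein ideal inside the Hecke algebra. The two decisive inputs are: (a) the Mordell--Weil group $J_0(p)^{\mathrm{eis}}(\Q)$ is finite, generated by the class of $(0)-(\infty)$ of order equal to the numerator of $(p-1)/12$ (combining Manin--Drinfeld with Mazur's rank analysis); and (b) the composite morphism $X_0(p) \to J_0(p) \to J_0(p)^{\mathrm{eis}}$, $x \mapsto [(x)-(\infty)]$, is a formal immersion at $\infty$ modulo every prime $\ell$ outside a small explicit set. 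A reduction-modulo-$\ell$ argument then forces any non-cuspidal $\Q$-point to specialize to the cusp $\infty$ at every sufficiently large $\ell$, hence to equal $\infty$ globally. The finitely many exceptional primes $p$ listed in the theorem are handled by a direct enumeration of rational points, all corresponding to CM elliptic curves of class number one.

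For (2), one works over a quadratic field $k$ with nontrivial involution $\sigma$ and replaces $D_x$ by $(x)+(x^{\sigma})-(0)-(\infty)$, a divisor now defined over $\Q$. A parallel formal-immersion argument, applied this time to the symmetric square map $X_0(p)^{(2)} \to J_0(p)^{\mathrm{eis}}$ at the pair of cusps, again forces $x$ to be a cusp outside a finite set of primes $\mfS(k)$ depending on $k$. The exclusion of imaginary quadratic fields of class number one is essential: for such $k$, CM elliptic curves defined over $k$ furnish genuine non-cuspidal quadratic points on $X_0(p)$ for infinitely many $p$, because the requisite cyclic $p$-subgroups are automatically $k$-rational. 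Under the class-number hypothesis, the main theorem of complex multiplication constrains the Galois action on the $p$-torsion of putative CM sources sharply enough to exclude them for all large $p$.

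The main obstacle, in both parts, is establishing the formal-immersion property in the worst residue characteristics (especially $\ell = 2,3$) together with the Mordell--Weil rank computation for the Eisenstein quotient; these rely on detailed knowledge of the special fibers of the regular model of $X_0(p)$, on Eichler--Shimura relations tying Hecke operators to Frobenius, and on the action of the Atkin--Lehner involution $w_p$ on $H^0(X_0(p),\Omega^1)$. Once those structural facts are in place, the remainder reduces to a bounded case analysis of residue classes at a short list of small primes, together with an enumeration of the CM $j$-invariants giving rise to exceptional rational or quadratic points.
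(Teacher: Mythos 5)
First, note that the paper offers no proof of this theorem: both parts are quoted from the literature (Mazur for (1), Momose for (2)), so your proposal can only be judged against those original arguments. As a sketch of them it captures one genuine ingredient — the finiteness of the Mordell--Weil group of the Eisenstein quotient together with a formal-immersion argument at the cusps — but it assigns that ingredient the wrong role, and this is a real gap. That step does \emph{not} show that a non-cuspidal point ``specializes to the cusp $\infty$ at every sufficiently large $\ell$, hence equals $\infty$ globally''; what it yields is the contrapositive: a non-cuspidal rational (or quadratic) point cannot reduce to a cusp at the relevant primes, whence $j(E)$ is integral away from $p$ and $E$ has potentially good reduction there (this excludes Momose's ``Type~1''). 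It leaves completely untouched the points with everywhere potentially good reduction, CM points included, and those are the heart of the matter.

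The missing component is the classification of the isogeny character $\lambda:\G_k\to\F_p^{\times}$ attached to the cyclic subgroup: one shows $\lambda^{12}$ is unramified outside $p$, computes it on inertia at $p$ via Raynaud--Serre, and evaluates it on Frobenius at small split primes $q$ using the Weil bounds. The surviving possibilities are ``Type~2'' ($\lambda^{12}=\theta_p^{6}$, forcing $p\equiv 3\bmod 4$ and, via a quadratic-residue computation, forcing every prime $q<p/4$ split in $k$ to be inert in $\Q(\sqrt{-p})$) and ``Type~3'' (CM by an imaginary quadratic field whose Hilbert class field lies in $k$). For (1), Type~2 reduces to $h(\Q(\sqrt{-p}))=1$, resolved by Baker--Heegner--Stark — this is where the explicit list through $163$ comes from, not from ``direct enumeration.'' For (2), the same reduction is handled by Goldfeld's analytic estimate, effective except for at most one prime tied to a possible Siegel zero, a feature the paper explicitly records (Remark~\ref{N(k)effective}, Theorem~\ref{goldfeld}) and which your sketch cannot account for. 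Indeed Sections~\ref{charI}--\ref{charII} of the present paper are an adaptation of precisely this character-classification machinery (Remark~\ref{mimictype} notes that Type~1 is vacuous for Shimura curves), so omitting it means omitting the part of Mazur's and Momose's proofs on which the rest of the paper actually relies.
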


Notice that all the imaginary quadratic fields of class number one
are 
$\Q(\sqrt{-1})$, $\Q(\sqrt{-2})$, $\Q(\sqrt{-3})$,
$\Q(\sqrt{-7})$, $\Q(\sqrt{-11})$, $\Q(\sqrt{-19})$,
$\Q(\sqrt{-43})$, $\Q(\sqrt{-67})$ and $\Q(\sqrt{-163})$
by \cite[p.205]{Ba} or \cite[Theorem, p.2]{St}.
In Theorem \ref{X0p} (2), the finite set $\mfS(k)$ is effectively
estimated except at most one prime number.
If such a prime exists, it is concerned with a Siegel zero
of the $L$-functions of quadratic characters (\cf Theorem \ref{goldfeld}).
For related topics about modular curves, see \cite{A2}.

We have an analogue of Theorem \ref{X0p} for Shimura curves, as
explained below.
Let $B$ be an indefinite quaternion division algebra over $\Q$.
Let
$$d:=\disc B$$
be the discriminant of $B$.
Then $d$ is the product of an even number of distinct prime numbers, and $d>1$.
Fix a maximal order $\cO$ of $B$.
For each prime number $p$ not dividing $d$, fix an isomorphism
\begin{equation}
\label{OM2}
\cO\otimes_{\Z}\Z_p\cong\M_2(\Z_p) \nonumber
\leqno(1.1)
\end{equation}
of $\Z_p$-algebras.

\begin{defn}
\label{defqm}
\rm

(\cf \cite[p.591]{Bu})
Let $S$ be a scheme.
%where $d$ is invertible.
A QM-abelian surface by $\cO$
over $S$ is a pair $(A,i)$ where $A$ is an 
abelian surface over $S$ (i.e. $A$ is an abelian scheme over $S$ 
of relative dimension $2$), and 
$i:\cO\inj\End_S(A)$ 
is an injective ring homomorphism (sending $1$ to $\id$). 
We consider that $A$ has a left $\cO$-action.
We sometimes omit ``by $\cO$" and simply write ``a QM-abelian surface".

\end{defn}

%\noindent
Let $M^B$ be the coarse moduli scheme over $\Q$ parameterizing isomorphism classes
of QM-abelian surfaces by $\cO$.
The notation $M^B$ is permissible
although we should write $M^{\cO}$ instead of $M^B$;
for even if we replace $\cO$ by another maximal order $\cO'$,
we have a natural isomorphism
$M^{\cO}\cong M^{\cO'}$
since $\cO$ and $\cO'$ are conjugate in $B$.
Then $M^B$ is a proper smooth curve over $\Q$, called a Shimura curve.
For a prime number $p$ not dividing $d$,
let $M_0^B(p)$
% (or simply $M_0(p)$) 
be the coarse moduli scheme over $\Q$ parameterizing isomorphism classes
of triples $(A,i,V)$ where $(A,i)$ is a QM-abelian surface by $\cO$
and $V$ is a left $\cO$-submodule of $A[p]$ with $\F_p$-dimension $2$.
Here $A[p]$ is the kernel of multiplication by $p$ in $A$.
Then $M_0^B(p)$ is a proper smooth curve over $\Q$, which we call a
Shimura curve of $\Gamma_0(p)$-type.
We have a natural map $$\pi^B(p):M_0^B(p)\longrightarrow M^B$$
over $\Q$ defined by $(A,i,V)\longmapsto (A,i)$.
Note that $M^B$ (\resp $M_0^B(p)$) is an analogue of the modular curve $X_0(1)$
(\resp $X_0(p)$).

We study points on $M_0^B(p)$, and over a quadratic field
(considered as a subfield of $\C$)
we show that there are at most %``CM points of order $4$ or $6$"
elliptic points of order $2$ or $3$
on it for every sufficiently large prime number $p$.
%A CM point of order $4$ (\resp $6$) is a point on $M_0^B(p)$
%whose corresponding object has an automorphism group of
%order $4$ (\resp $6$).
%The precise definition is given in Section \ref{char}.
%The precise definition of CM points is given in Section \ref{QM}.
%The following is the main theorem of this article.
The following is the main result of this article.

\begin{thm}
\label{mainthm0}
%(Theorem \ref{mainthm} below)

Let $k$ be a quadratic field which is not an imaginary quadratic field of
class number one.
Then there is a finite set $\cN(k)$ of prime numbers depending on $k$
%constant $C(B,K)$ depending on $B$ and $K$
satisfying the following.

(1)
If $B\otimes_{\Q}k\cong\M_2(k)$, then $M_0^B(p)(k)=\emptyset$ holds
for every prime number $p\not\in\cN(k)$ not dividing $d$.

(2)
If $B\otimes_{\Q}k\not\cong\M_2(k)$, then
$M_0^B(p)(k)\subseteq\{\text{elliptic points of order $2$ or $3$}\}$
holds for every prime number $p\not\in\cN(k)$ not dividing $d$.

\end{thm}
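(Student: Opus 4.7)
The plan is to attach to any $P \in M_0^B(p)(k)$ a mod-$p$ Galois character $\lambda\colon G_k \to \F_p^\times$, classify the possible $\lambda$ into a finite list of types, and then rule out each type outside a finite set of primes $p$. First, let $(A,i,V)$ be the moduli datum of $P$; since $M^B$ is only a coarse moduli space, $(A,i)$ is a priori defined only up to a bounded twist, but for non-elliptic points one arranges $(A,i)$ and $V \subset A[p]$ to be defined over $k$ itself, possibly after a quadratic twist. Using the chosen splitting $\cO \otimes \Z_p \cong \M_2(\Z_p)$, the $\cO$-module $A[p]$ is free of rank one over $\M_2(\F_p)$, so $V$ is isomorphic to the standard representation of $\M_2(\F_p)$; by Schur's lemma the $\cO$-linear action of $G_k$ on $V$ factors through a character $\lambda$, with $\lambda\cdot\lambda' = \chi_p$ for $\lambda'$ the corresponding character on $A[p]/V$ and $\chi_p$ the mod-$p$ cyclotomic character.

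The next step is to pin down the ramification of $\lambda$. By Morita's theorem every QM-abelian surface has potentially good reduction at every finite place, so after a finite extension $k'/k$ of bounded degree $A$ has good reduction everywhere, and $\lambda$ is unramified outside the primes above $p$. At a prime $\mfp\mid p$, Raynaud's theory of finite flat group schemes of type $(p,\dots,p)$, combined with the $\cO$-equivariance on $V$, forces $\lambda|_{I_\mfp}$ to be a fundamental character of level $1$ or $2$ of the tame inertia. Together with the Weil bounds on $\tr\Frob_\mfq$ at good unramified primes $\mfq$, this confines $\lambda$ to a finite, explicit list of types depending only on $k$.

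I would then rule out each surviving type in turn. For most types, comparing $\lambda(\Frob_\mfq)$ with the mod-$p$ reduction of the Frobenius traces on $A$ at a well-chosen $\mfq$ produces a divisibility of the form ``$p\mid N$'' for a fixed nonzero integer $N = N(k,\mfq)$, excluding all but finitely many $p$. The resistant cases force $(A,i)$ to admit CM by an imaginary quadratic subfield $F \subset B$. In case (1), where $B\otimes_\Q k \cong \M_2(k)$, the field $k$ itself embeds in $B$, and the compatibility between the embeddings of $k$ and $F$, together with the $G_k$-rationality of $V$ and the relation $\lambda\lambda' = \chi_p$, yields a contradiction for $p$ outside a finite set, so $M_0^B(p)(k)=\emptyset$. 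In case (2), where $B\otimes_\Q k$ is a division algebra, the CM obstruction persists, and the only surviving points are those for which $(A,i)$ has automorphisms strictly beyond $\{\pm 1\}$---namely those with CM by $\Z[\sqrt{-1}]$ or $\Z[(1+\sqrt{-3})/2]$---which are exactly the elliptic points of orders $2$ and $3$.

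The main obstacle, as in Momose's argument for $X_0(p)$ (\cite{Mo}), is the Siegel-zero case. Exactly one of the character types for $\lambda$ resists elimination by the local-global arithmetic above; its control depends on a lower bound for $L(1,\chi)$ for a quadratic Dirichlet character $\chi$ attached to $k$. Invoking Goldfeld's theorem (cited as Theorem~\ref{goldfeld} below) supplies such a bound effectively except possibly for one prime per field $k$, and this is the source of the mild ineffectivity built into the finite set $\cN(k)$ in the statement.
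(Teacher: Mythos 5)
Your overall architecture (attach a mod-$p$ character to a point, control its ramification via potentially good reduction and Raynaud, eliminate the finitely many resulting types by Frobenius congruences, and leave the last type to the Goldfeld/Mazur bound) is exactly the Mazur--Momose strategy the paper follows, and your account of the endgame --- Type~2 handled by the quadratic-residue argument feeding into $\cN_2(k)$, Type~3 excluded because $k$ would have to contain the Hilbert class field of an imaginary quadratic field --- is essentially right in spirit. But there is a genuine gap at the very first step, the field of definition of the moduli datum, and it is precisely where the Shimura-curve case differs from $X_0(p)$.

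You assert that for non-elliptic points one can arrange $(A,i)$ and $V$ to be defined over $k$ itself, ``possibly after a quadratic twist.'' This is false. The obstruction to descending from the field of moduli $k$ to a field of definition is a class $[c_x]\in H^2(\G_k,\Aut_{\cO}(A_x,V_x))=H^2(\G_k,\{\pm1\})=\Br_k[2]$, i.e.\ a quaternion algebra over $k$, not an $H^1$ class that twisting could absorb. By Jordan's theorem (Proposition~\ref{fieldMB}) the pair $(A_x,i_x)$ can be defined over $k$ \emph{if and only if} $B\otimes_{\Q}k\cong\M_2(k)$; so in your case~(2) no point of $M_0^B(p)(k)$ is represented by a triple over $k$, and even in case~(1) the triple may fail to descend when $\Aut(x)=\{\pm1\}$ and $\Aut(x')\cong\Z/4\Z$. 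Consequently your character $\lambda$ is in general only defined on $\G_K$ for a quadratic extension $K/k$ depending on the point (chosen to split the Brauer class, Lemma~\ref{fieldofdef}). To get a character of $\G_k$ one must push $\lambda$ down via the transfer map, $\varphi=\lambdaab\circ\tr_{K/k}$, and then verify (i) that $\varphi^{12}$ is still unramified outside $p$ (Corollary~\ref{phiunr}), (ii) that $\varphi^4$ is independent of the choice of $K$ (Lemma~\ref{phi4indep}), and (iii) that the inertia exponents at $p$ and the Frobenius congruences at auxiliary split primes $\mfq$ survive the transfer --- which is why the paper works with exponents in $\{0,8,12,16,24\}$ and compares $\alpha_{\mfq}^{\varepsilon}$ with $\beta_{\mfq}^{24h_k}$ rather than $\beta_{\mfq}^{12h_k}$, and why the choice of $K$ must be re-normalized (primes over $p$ inert, auxiliary primes ramified) at several points. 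None of this is present in your sketch, and without it the classification is carried out for a character that does not exist over $k$. A second, smaller omission: in case~(1) you must also dispose of the elliptic points (the theorem claims $M_0^B(p)(k)=\emptyset$ there, not just absence of non-elliptic points); the paper does this by showing that when $\Aut(x)\ne\{\pm1\}$ or $\Aut(x')\not\cong\Z/4\Z$ the triple \emph{does} descend to $k$ (Proposition~\ref{fieldM0Bp}(1)), so the direct character argument applies to them as well.
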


\begin{rmk}
\label{N(k)effective}
\rm

The set $\cN(k)$ in Theorem \ref{mainthm0} is effectively estimated except at most one prime number
(\cf Theorem \ref{goldfeld}).

\end{rmk}

We know $M^B(\R)=\emptyset$ by \cite[Theorem 0, p.136]{Sh}.
Since there is a map
$\pi^B(p):M_0^B(p)\longrightarrow M^B$
over $\Q$, we have $M_0^B(p)(\R)=\emptyset$.
So if $k$ has a real place, then we have already known $M_0^B(p)(k)=\emptyset$
before Theorem \ref{mainthm0}.

In Section \ref{QM}, we discuss
the Galois representations associated to QM-abelian surfaces.
In Section \ref{endaut}, we discuss the endomorphism rings and the automorphism groups
of QM-abelian surfaces.
In Section \ref{fieldofdefinition}, we study the fields of definition
of the pairs (\resp the triples) corresponding to
algebraic points on $M^B$ (\resp $M_0^B(p)$).
In Section \ref{charI} and Section \ref{charII},
we classify the characters associated to algebraic points on $M_0^B(p)$
by slightly modifying the method in \cite{Mo},
and show Theorem \ref{mainthm0}.
In Section \ref{ex}, we give examples of points over imaginary quadratic fields
on the Shimura curves $M^B$ of genus zero. %without level structure.
In Section \ref{galim}, we study the images of the mod $p$ Galois representations associated to
QM-abelian surfaces over imaginary quadratic fields.
In Section \ref{apply}, we apply the result on the Galois images in Section \ref{galim}
to a finiteness conjecture
on abelian varieties in \cite{RT}.

The first author (Arai) is very sorry for the loss of the coauthor (Momose)
during this work.
This article shall be dedicated to Fumiyuki Momose.

The first author would like to thank Yuichiro Taguchi
for suggesting an application to a finiteness conjecture on abelian varieties.
%(Section \ref{apply}).
He would like to thank also Yoshiyasu Ozeki and
Akio Tamagawa %and the anonymous referee 
for helpful comments.

\vspace{5mm}
\noindent
{\bf Notation}

\vspace{1mm}

For an integer $n\geq 1$ and a commutative group (or a commutative group scheme) $G$,
let $G[n]$ denote the kernel of multiplication by $n$ in $G$.
For a field $F$,
let $\ch F$ denote the characteristic of $F$,
let $\Fbar$ denote an algebraic closure of $F$,
let $\Fsep$ (\resp $\Fab$) denote the separable closure
(\resp the maximal abelian extension) of $F$ inside $\Fbar$,
and let $\G_F=\Gal(\Fsep/F)$, $\Gab_F=\Gal(\Fab/F)$.
For a number field $k$, 
let $h_k$ denote the class number of $k$;
fix an inclusion $k\hookrightarrow\C$
and take the algebraic closure $\kb$ inside $\C$;
let $k_v$ denote the completion of $k$ at $v$
where $v$ is a place (or a prime) of $k$;
let $k_{\A}$ denote the ad\`{e}le ring of $k$;
and let $\Ram (k)$ denote the set of prime numbers which are ramified in $k$.
For a number field or a local field $k$, let $\cO_k$ denote
the ring of integers of $k$.
For a scheme $S$ and an abelian scheme $A$ over $S$,
let $\End_S(A)$ denote the ring of endomorphisms of $A$ defined over $S$.
If $S=\Spec(F)$ for a field $F$ and if $F'/F$ is a field extension,
simply put
$\End_{F'}(A):=\End_{\Spec(F')}(A\times_{\Spec(F)}\Spec(F'))$
and
$\End(A):=\End_{\Fbar}(A)$.
Let $\Aut(A):=\Aut_{\Fbar}(A)$ be the group
of automorphisms of $A$ defined over $\Fbar$.
For a prime number $p$ and an abelian variety $A$ over a field $F$,
let
$\displaystyle T_pA:=\lim_{\longleftarrow}A[p^n](\Fbar)$
be the $p$-adic Tate module of $A$,
where the inverse limit is taken with respect to
multiplication by $p$ : $A[p^{n+1}](\Fbar)\longrightarrow A[p^n](\Fbar)$.

\section{Galois representations associated to QM-abelian surfaces (generalities)}
\label{QM}

We consider the Galois representation associated to a QM-abelian surface.
Take a prime number $p$ not dividing $d$.
Let $F$ be a field with $\ch F\ne p$.
Let $(A,i)$ be a QM-abelian surface by $\cO$ over $F$.
We have isomorphisms of $\Z_p$-modules:
$$\Z_p^4\cong T_pA\cong\cO\otimes_{\Z}\Z_p\cong\M_2(\Z_p).$$
The middle is also an isomorphism of left $\cO$-modules;
the last is also an isomorphism of $\Z_p$-algebras (which is fixed in (\ref{OM2})).
We sometimes identify these $\Z_p$-modules.
Take a $\Z_p$-basis
$$e_1=\left(\begin{matrix} 1&0 \\ 0&0\end{matrix}\right),\ 
e_2=\left(\begin{matrix} 0&0 \\ 1&0\end{matrix}\right),\ 
e_3=\left(\begin{matrix} 0&1 \\ 0&0\end{matrix}\right),\ 
e_4=\left(\begin{matrix} 0&0 \\ 0&1\end{matrix}\right)$$
of $\M_2(\Z_p)$.
Then the image of the natural map
$$\M_2(\Z_p)\cong\cO\otimes_{\Z}\Z_p\hookrightarrow\End(T_pA)\cong\M_4(\Z_p)$$
lies in
$\left\{\left(\begin{matrix} 
X\ &0 \\ 0\ &X
\end{matrix}\right)
\Bigg|\, X\in\M_2(\Z_p)\right\}$.
%
%Let $\Fsep$ be a separable closure of $F$, and
%let $\G_F=\Gal(\Fsep/F)$ be the absolute Galois group of $F$.
The action of the Galois group $\G_F$ on $T_pA$ induces a representation
$$\rho:\G_F\longrightarrow\Aut_{\cO\otimes_{\Z}\Z_p}(T_pA)\subseteq\Aut(T_pA)
\cong\GL_4(\Z_p),$$
where
$\Aut_{\cO\otimes_{\Z}\Z_p}(T_pA)$
is the group of automorphisms of $T_pA$
commuting with the action of $\cO\otimes_{\Z}\Z_p$.
We often identify
$\Aut(T_pA)=\GL_4(\Z_p)$.
The above observation implies
$$\Aut_{\cO\otimes_{\Z}\Z_p}(T_pA)=
\left\{\left(\begin{matrix} sI_2&tI_2 \\ uI_2&vI_2\end{matrix}\right) \Bigg|\,
\left(\begin{matrix} s&t \\ u&v\end{matrix}\right)\in\GL_2(\Z_p)\right\},$$
where 
$I_2=\left(\begin{matrix} 1\ &0 \\ 0\ &1\end{matrix}\right)$.
Note that $\Aut_{\cO\otimes_{\Z}\Z_p}(T_pA)$ is contained in the general symplectic group
$\GSp_4(\Z_p)=\{Y\in\GL_4(\Z_p)\mid
{}^tYJYJ^{-1}\in\Z_p^{\times}I_4\}$,
where
$J=\left(\begin{matrix} 0&-I_2 \\ I_2&0\end{matrix}\right)$
and
$I_4=\left(\begin{matrix} I_2&0 \\ 0&I_2\end{matrix}\right)$.
Then the Galois representation $\rho$ factors as
$$\rho:\G_F\longrightarrow
\left\{\left(\begin{matrix} sI_2&tI_2 \\ uI_2&vI_2\end{matrix}\right) \Bigg|\,
\left(\begin{matrix} s&t \\ u&v\end{matrix}\right)\in\GL_2(\Z_p)\right\}
\subseteq\GL_4(\Z_p).$$
Let 
$$\rhob:\G_F\longrightarrow
\left\{\left(\begin{matrix} sI_2&tI_2 \\ uI_2&vI_2\end{matrix}\right) \Bigg|\,
\left(\begin{matrix} s&t \\ u&v\end{matrix}\right)\in\GL_2(\F_p)\right\}
\subseteq\GL_4(\F_p)$$
be the reduction of $\rho$ modulo $p$.
Let
\begin{equation}
\label{rhobar}
\rhob_{A,p}:\G_F\longrightarrow\GL_2(\F_p) \nonumber
\leqno(2.1)
\end{equation}
denote the Galois representation induced from $\rhob$ by
$``\left(\begin{matrix} s&t \\ u&v\end{matrix}\right)"$,
so that we have
$\rhob_{A,p}(\sigma)=\left(\begin{matrix} s&t \\ u&v\end{matrix}\right)$
if $\rhob(\sigma)=\left(\begin{matrix} sI_2&tI_2 \\ uI_2&vI_2\end{matrix}\right)$
for $\sigma\in\G_F$.

%

%For a separable closure $\Fsep$ of $F$,
Suppose that $A[p](\Fsep)$ has a left $\cO$-submodule $V$ with $\F_p$-dimension $2$
which is stable under the action of $\G_F$.
We may assume
$V=\F_p e_1\oplus\F_p e_2
=\left\{\left(\begin{matrix} *&0 \\ *&0 \end{matrix}\right)\right\}$.
Since $V$ is stable under the action of $\G_F$, we find
$\rhob_{A,p}(\G_F)\subseteq
\left\{\left(\begin{matrix} 
s&t \\ 0&v
\end{matrix}\right)\right\}\subseteq\GL_2(\F_p)$.
%
%We also find that $\G_F$ acts on $V$ as
%``multiplication by $x$";
Let
\begin{equation}
\label{lambda}
\lambda:\G_F\longrightarrow\F_p^{\times} \nonumber
\leqno(2.2)
\end{equation}
denote the character induced from $\rhob_{A,p}$ by ``$s$", so that
$\rhob_{A,p}(\sigma)=
\left(\begin{matrix} 
\lambda(\sigma)&* \\ 0&*
\end{matrix}\right)$
for $\sigma\in\G_F$.
Note that
$\G_F$ acts on $V$ by $\lambda$
(i.e. $\rhob(\sigma)(v)=\lambda(\sigma)v$
for $\sigma\in\G_F$, $v\in V$).

\begin{rmk}
\label{analogellip}

The Galois representation $\rhob_{A,p}$ as above
looks like that of an elliptic curve over $F$ having an $F$-rational
isogeny of degree $p$.

\end{rmk}

\section{Endomorphism rings and automorphism groups}
\label{endaut}

We recall the notion of CM (complex multiplication) on an abelian variety.
Let $F$ be a field, and let $A$ be an abelian variety over $F$.
The abelian variety $A$ is said to have CM (over $\Fbar$)
if $\End(A)\otimes_{\Z} \Q$ contains a product $R$ of number fields satisfying
$\dim_{\Q} R=2\dim A$.

Consider the case where $\ch F=0$.
If $A/F$ is $\Fbar$-simple and has CM by $R$ as above, then $\End(A)\otimes_{\Z}\Q\cong R$
(\cite[Chapter IV, Section 21, Table, p.202]{Mu}).
If $(A,i)/F$ is a QM-abelian surface, then either $A$ has CM or $A$ is $\Fbar$-simple.
If $(A,i)/F$ is a QM-abelian surface with CM, then $A$ is $\Fbar$-isogenous to $E\times E$
where $E$ is an elliptic curve over $\Fbar$ with CM.

Next we consider the automorphism group of a QM-abelian surface.
Let $(A,i)$ be a QM-abelian surface by $\cO$ over a field $F$.
Put
$$\End_{\cO}(A):=\{f\in\End(A)\mid fi(g)=i(g)f \text{\ \ for any $g\in\cO$}\}$$
and
$$\Aut_{\cO}(A):=\Aut(A)\cap\End_{\cO}(A).$$
Assume $\ch F=0$.
If $A$ has no CM, then $\End_F(A)=\End(A)=\cO$ (loc. cit.),
$\End_{\cO}(A)=\Z$
and $\Aut_{\cO}(A)=\{\pm 1\}\cong\Z/2\Z$.
If $A$ has CM, then
$\End(A)\otimes_{\Z}\Q=\M_2(\cK)$
for an imaginary quadratic field $\cK$.
In this case $\End_{\cO}(A)$ is an order of the imaginary quadratic field
$\Set{\left(\begin{matrix} a&0 \\ 0&a\end{matrix}\right)\in\M_2(\cK)|a\in\cK}$.
Then $\Aut_{\cO}(A)\cong\Z/2\Z$, $\Z/4\Z$ or $\Z/6\Z$.
%The isomorphism $\Aut_{\cO}(A)\cong\Z/4\Z$ (\resp $\Aut_{\cO}(A)\cong\Z/6\Z$)
%is possible only when all the prime divisors of $d$ are congruent to $2$ or $-1$ modulo $4$
%(\resp $0$ or $-1$ modulo $3$).

Let $p$ be a prime number not dividing $d$.
Let $(A,i,V)$ be a triple where $(A,i)$ is a QM-abelian surface by $\cO$ over
a field $F$
and $V$ is a left $\cO$-submodule of $A[p](\Fbar)$ with $\F_p$-dimension $2$.
Define a subgroup $\Aut_{\cO}(A,V)$ of $\Aut_{\cO}(A)$ by
$$\Aut_{\cO}(A,V):=\{f\in\Aut_{\cO}(A)\mid f(V)=V\}.$$
Assume $\ch F=0$.
Then $\Aut_{\cO}(A,V)\cong\Z/2\Z$, $\Z/4\Z$ or $\Z/6\Z$.
Notice that we have
$\Aut_{\cO}(A)\cong\Z/2\Z$
(\resp $\Aut_{\cO}(A,V)\cong\Z/2\Z$)
if and only if
$\Aut_{\cO}(A)=\{\pm 1\}$
(\resp $\Aut_{\cO}(A,V)=\{\pm 1\}$).
Notice also that if $A$ has no CM, then $\Aut_{\cO}(A)=\Aut_{\cO}(A,V)=\{\pm 1\}$.

We express the sets $M^B(\C)$ and $M_0^B(p)(\C)$ of $\C$-valued points
as quotients of the upper half-plane.
Let $\cO'$ be the order of $B$
satisfying $\cO'\subseteq\cO$ and
\begin{equation*}
\begin{cases}
\cO'\otimes_{\Z}\Z_l=\cO\otimes_{\Z}\Z_l \ \ (l\ne p), \\
\cO'\otimes_{\Z}\Z_p
=\Set{\left(\begin{matrix} s&t \\ u&v \end{matrix}\right)\in\M_2(\Z_p)
\cong\cO\otimes_{\Z}\Z_p|u\in p\Z_p}.
\end{cases}
\end{equation*}
Consider the groups
$$\Gamma:=\{x\in\cO\mid\Nrd(x)=1\}$$
and
$$\Gamma':=\{x\in\cO'\mid\Nrd(x)=1\},$$
where $\Nrd$ is the reduced norm.
Then $\Gamma$ and $\Gamma'$ are considered to be subgroups of $\SL_2(\R)$
via $\SL_2(\R)\subseteq \GL_2(\R)\cong (B\otimes_{\Q}\R)^{\times}$.
We have isomorphisms of Riemann surfaces
$$M^B(\C)\cong\Gamma\backslash\h$$
and
$$M_0^B(p)(\C)\cong\Gamma'\backslash\h,$$
where $\h$ is the upper half-plane
(\cf \cite[4.3, p.669]{Be}).
A point of $M_0^B(p)(\C)$ is an elliptic point of order $2$ (\resp $3$)
if and only if the corresponding triple $(A,i,V)$ over $\C$ satisfies
$\Aut_{\cO}(A,V)\cong\Z/4\Z$ (\resp $\Aut_{\cO}(A,V)\cong\Z/6\Z$)
(\cf \cite[1.2, p.9]{Sh0}).
Let $\nu_2(p)$ (\resp $\nu_3(p)$) be the number of
elliptic points of order $2$ (\resp $3$) of $M_0^B(p)(\C)$.
%points of $M_0^B(p)(\C)$
%whose corresponding pairs $(A,V)$ satisfy
%$\Aut_{\cO}(A,V)\cong\Z/4\Z$ (\resp $\Aut_{\cO}(A,V)\cong\Z/6\Z$).
Then we have
$$\nu_2(p)
=\left(1+\left(\frac{-1}{p}\right)\right)\prod_{l|d}\left(1-\left(\frac{-1}{l}\right)\right),$$
$$\nu_3(p)
=\left(1+\left(\frac{-3}{p}\right)\right)\prod_{l|d}\left(1-\left(\frac{-3}{l}\right)\right)
$$
where $l$ is a prime divisor of $d$ and
\begin{align*}
\left(\frac{-1}{q}\right)&=
\begin{cases}
1 &\text{if $q\equiv 1\mod{4}$},\\
-1 &\text{if $q\equiv -1\mod{4}$},\\
0 &\text{if $q=2$},
\end{cases}\\
\left(\frac{-3}{q}\right)&=
\begin{cases}
1 &\text{if $q\equiv 1\mod{3}$},\\
-1 &\text{if $q\equiv -1\mod{3}$},\\
0 &\text{if $q=3$}
\end{cases}
\end{align*}
for a prime number $q$ (\cite[Theorem 3.12, p.111]{Shimizu}).
%Note that $\nu_2(p)$ (\resp $\nu_3(p)$) is the number of elliptic points
%of order $2$ (\resp $3$) in $M_0^B(p)(\C)$.
%
So the isomorphism
$\Aut_{\cO}(A,V)\cong\Z/4\Z$ (\resp $\Aut_{\cO}(A,V)\cong\Z/6\Z$)
is possible only when
$p\not\equiv -1\bmod{4}$ and
all the prime divisors of $d$ are congruent to $2$ or $-1$ modulo $4$
(\resp $p\not\equiv -1\bmod{3}$ and
all the prime divisors of $d$ are congruent to $0$ or $-1$ modulo $3$).

\section{Fields of definition}
\label{fieldofdefinition}

Let $k$ be a number field (considered as a subfield of $\C$).
Let $p$ be a prime number not dividing $d$.
Take a point
$$x\in M_0^B(p)(k).$$
Let $x'\in M^B(k)$ be the image
of $x$ by the map $\pi^B(p):M_0^B(p)\longrightarrow M^B$.
Then $x'$ is represented by a QM-abelian surface (say $(A_x,i_x)$) over $\kb$,
and $x$ is represented by a triple $(A_x,i_x,V_x)$
where $V_x$ is a left $\cO$-submodule of $A_x[p](\kb)$
with $\F_p$-dimension $2$.
For a finite extension $M$ of $k$ (in $\kb$),
we say that we can take $(A_x,i_x)$ (\resp $(A_x,i_x,V_x)$) to be defined
over $M$
if there is a QM-abelian surface $(A,i)$ over $M$ such that
$(A,i)\otimes_M\kb$ is isomorphic to $(A_x,i_x)$
(\resp if there is a QM-abelian surface $(A,i)$ over $M$
and a left $\cO$-submodule $V$ of $A[p](\kb)$ with $\F_p$-dimension $2$
stable under the action of $\G_M$ such that there is an isomorphism
between $(A,i)\otimes_M\kb$ and $(A_x,i_x)$ under which $V$ corresponds
to $V_x$).
Put
$$\Aut(x):=\Aut_{\cO}(A_x,V_x),\ \ \ \ \Aut(x'):=\Aut_{\cO}(A_x).$$
Then $\Aut(x)$ is a subgroup of $\Aut(x')$.
The point $x$ is called a CM point if $A_x$ has CM.
Note that if $\Aut(x')\cong\Z/4\Z$ or $\Z/6\Z$,
then $x$ is a CM point.
In particular, if $x$ is an elliptic point of order $2$ or $3$,
then $x$ is a CM point.

Since $x$ is a $k$-rational point, we have 
$^{\sigma}x=x$ for any $\sigma\in\G_k$.
Then, for any $\sigma\in\G_k$, there is an isomorphism
$$\phi_{\sigma}:{}^{\sigma}(A_x,i_x,V_x)\longrightarrow (A_x,i_x,V_x),$$
which we fix once for all.
Let
$$\phi'_{\sigma}:{}^{\sigma}(A_x,i_x)\longrightarrow (A_x,i_x)$$
be the isomorphism induced from $\phi_{\sigma}$ by forgetting $V_x$.
For $\sigma,\tau\in\G_k$, put
$$c_x(\sigma,\tau)
:=\phi_{\sigma}\circ {}^{\sigma}\phi_{\tau}
\circ\phi_{\sigma\tau}^{-1}\in\Aut(x)$$
and
$$c'_x(\sigma,\tau)
:=\phi'_{\sigma}\circ {}^{\sigma}\phi'_{\tau}
\circ(\phi'_{\sigma\tau})^{-1}\in\Aut(x').$$
Then $c_x$ (\resp $c'_x$) is a $2$-cocycle
and defines a cohomology class
$[c_x]\in H^2(\G_k,\Aut(x))$
(\resp $[c'_x]\in H^2(\G_k,\Aut(x'))$.
Here the action of $\G_k$ on $\Aut(x)$ (\resp $\Aut(x')$)
is defined as follows.

Let $f:A_x\longrightarrow A_x$
be an isomorphism in $\Aut(x)$ (\resp $\Aut(x')$).
For an element $\sigma\in\G_k$,
let $^{\sigma}f:{}^{\sigma}A_x\longrightarrow {}^{\sigma}A_x$
be the isomorphism induced from $f$.
We define $\sigma(f)$ by the commutative diagram
\begin{equation*}
\begin{CD}
^{\sigma}A_x@>\text{$^{\sigma}f$}>> ^{\sigma}A_x \\
@V\text{$\phi_{\sigma}$}VV   @VV\text{$\phi_{\sigma}$}V \\
A_x@>\text{$\sigma(f)$}>> A_x.
\end{CD}
\end{equation*}
Then we get an action of $\G_k$ on the abelian group $\Aut(x)$ (\resp $\Aut(x')$)
by
$$\G_k\longrightarrow\Aut(\Aut(x)):\sigma\longmapsto(f\longmapsto\sigma(f))$$
$$(\text{\resp}\G_k\longrightarrow\Aut(\Aut(x')):\sigma\longmapsto(f\longmapsto\sigma(f))).$$

Note that the action of $\G_k$ on $\Aut(x)$ (\resp $\Aut(x')$)
is trivial if $\Aut(x)=\{\pm 1\}$ (\resp $\Aut(x')=\{\pm 1\}$).
For a cohomology class $[c]\in H^2(\G_k,*)$ and a finite extension $M$ of $k$
(\resp a place $v$ of $k$),
let $[c]_M\in H^2(\G_M,*)$ be the restriction of $[c]$ to $\G_M$
(\resp $[c]_v\in H^2(\G_{k_v},*)$ be the restriction of $[c]$ to $\G_{k_v}$).
Let $\Br_k$ be the Brauer group of $k$.
We want a small extension of $k$ over which $(A_x,i_x,V_x)$ (or $(A_x,i_x)$)
can be defined.

\begin{prop}[{\cite[Theorem (1.1), p.93]{J}}]
\label{fieldMB}

We can take $(A_x,i_x)$ to be defined over $k$
if and only if
$B\otimes_{\Q}k\cong\M_2(k)$.

\end{prop}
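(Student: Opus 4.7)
My plan is to translate the descent question into nonabelian Galois cohomology and then identify the obstruction with a Brauer class. By standard nonabelian descent, the pair $(A_x,i_x)$ is defined over $k$ if and only if the isomorphisms $\phi'_\sigma$ can be adjusted by elements of $\Aut(x')$ so as to satisfy the $1$-cocycle condition, which is equivalent to the class $[c'_x]\in H^2(\G_k,\Aut(x'))$ being trivial. So the proposition reduces to showing that $[c'_x]=0$ if and only if $B\otimes_{\Q}k\cong\M_2(k)$.

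For the $(\Rightarrow)$ direction, suppose $(A_x,i_x)$ descends to a pair $(A,i)$ over $k$. Extending $i$ $\Q$-linearly gives an embedding $B=\cO\otimes_{\Z}\Q\hookrightarrow\End_k(A)\otimes\Q$, and hence a $k$-algebra homomorphism from $B\otimes_{\Q}k$ to $\End_k(\mathrm{Lie}(A))\cong\M_2(k)$. This homomorphism sends $1$ to $\id$ and is therefore nonzero, hence injective by the simplicity of $B\otimes_{\Q}k$; since both source and target have $k$-dimension $4$, it is an isomorphism, so $B\otimes_{\Q}k\cong\M_2(k)$.

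For the $(\Leftarrow)$ direction, my plan is to construct a homomorphism
$$\kappa:H^2(\G_k,\Aut(x'))\longrightarrow\Br_k$$
sending $[c'_x]$ to $[B]\in\Br_k$, so that vanishing of $[B]_k$ forces vanishing of $[c'_x]$. Fix a prime $\ell\nmid d$ and a $\Q_\ell$-linear isomorphism of the rational Tate module $V_\ell A_x:=T_\ell A_x\otimes\Q_\ell$ with $B\otimes_{\Q}\Q_\ell$ as a left $\cO\otimes\Q_\ell$-module, and then extend scalars to $\kb$. Under this identification, each $\phi'_\sigma$ acts as right multiplication by an element $\beta_\sigma\in(B\otimes_{\Q}\kb)^\times$, and $(\sigma,\tau)\mapsto\beta_\sigma\cdot{}^\sigma\!\beta_\tau\cdot\beta_{\sigma\tau}^{-1}$ represents the image of $[c'_x]$ under the natural inclusion $\Aut(x')\hookrightarrow(B\otimes_{\Q}\kb)^\times$. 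Twisting the split form $\M_2(\kb)$ by the $\beta_\sigma$ reconstructs $B\otimes_{\Q}k$ as a $k$-form, so the image of $[c'_x]$ in $\Br_k$ is exactly $[B]_k$; when this vanishes, so does $[c'_x]$, and $(A_x,i_x)$ descends to $k$.

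The hard part will be making the identification $\kappa([c'_x])=[B]_k$ precise, particularly in the CM case, where $\Aut(x')$ may be as large as a cyclic group of order $4$ or $6$ and carries a nontrivial $\G_k$-action inherited from its embedding into the imaginary quadratic algebra $\End_{\cO}(A_x)\otimes\Q$. One must verify that this embedding is $\G_k$-equivariant for the two actions in play---the action defined via $\phi'_\sigma$ in the excerpt and the natural Galois action on $B\otimes_{\Q}\kb$---and that the reduced-norm (or splitting-field) reduction from $(B\otimes_{\Q}\kb)^\times$ to $\kb^\times$ preserves the relevant Brauer class of the resulting $2$-cocycle.
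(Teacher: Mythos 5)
The paper does not actually prove this proposition: it is imported verbatim from Jordan \cite{J}, Theorem (1.1), so your attempt has to stand on its own against that theorem rather than against an argument in the text. Your forward direction is correct and is the standard one: the $\cO$-action on the two-dimensional $k$-vector space $\mathrm{Lie}(A)$ of a model $(A,i)/k$ extends to a nonzero, hence injective, hence (by equality of dimensions) bijective homomorphism $B\otimes_{\Q}k\to\End_k(\mathrm{Lie}(A))\cong\M_2(k)$. Likewise, reducing descent to the vanishing of $[c'_x]\in H^2(\G_k,\Aut(x'))$ is the correct framework (it is the one the paper itself invokes, via D\`ebes--Douai, in the proof of Proposition \ref{fieldM0Bp}).

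The converse is where the entire content of Jordan's theorem lies, and there your argument has a genuine gap --- in fact two. First, the identity $\kappa([c'_x])=[B\otimes_{\Q}k]$ is precisely the hard statement, and you explicitly defer it; moreover the proposed construction of the $\beta_\sigma$ does not quite parse, since $\phi'_\sigma$ acts on the $\ell$-adic Tate module, a $\Q_\ell$-vector space on which Galois acts semilinearly through the identification $V_\ell({}^\sigma A_x)\cong V_\ell(A_x)$, not on a $\kb$-form of $B$; the cocycle computation for $\phi'_\sigma\circ{}^\sigma\phi'_\tau\circ(\phi'_{\sigma\tau})^{-1}$ uses this semilinearity in an essential way, so "right multiplication by $\beta_\sigma\in(B\otimes_{\Q}\kb)^\times$" needs to be replaced by an actual construction. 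Second, even granting $\kappa([c'_x])=[B\otimes_{\Q}k]$, you must show $\kappa$ is injective on the image of $H^2(\G_k,\Aut(x'))$ to conclude $[c'_x]=0$ from the splitting of $B\otimes_{\Q}k$. When $A_x$ has no CM one has $\Aut(x')=\{\pm 1\}$ with trivial action and injectivity of $H^2(\G_k,\{\pm 1\})\to\Br_k$ follows from Hilbert 90, but you never say this; when $A_x$ has CM, $\Aut(x')$ is cyclic of order $4$ or $6$ with a possibly nontrivial $\G_k$-action, and neither the $\G_k$-equivariance of an embedding $\Aut(x')\hookrightarrow\kb^{\times}$ nor injectivity on $H^2$ is established --- this is exactly the delicate case that forces the separate direct-factor arguments in Proposition \ref{fieldM0Bp}. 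As it stands, the converse direction is a plausible strategy rather than a proof; completing it requires either carrying out the identification of the descent gerbe with the class of $B\otimes_{\Q}k$ (the substance of Jordan's argument) or a separate CM-theoretic descent for the CM points.
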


\begin{prop}
\label{fieldM0Bp}

(1)
Suppose $B\otimes_{\Q}k\cong\M_2(k)$.
Further assume $\Aut(x)\ne\{\pm 1\}$ or
$\Aut(x')\not\cong\Z/4\Z$.
Then we can take $(A_x,i_x,V_x)$ to be defined over $k$.

(2)
Assume $\Aut(x)=\{\pm 1\}$.
Then there is a quadratic extension $K$ of $k$
such that we can take $(A_x,i_x,V_x)$ to be defined over $K$.

\end{prop}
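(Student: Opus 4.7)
The plan is to interpret both parts via the cohomological obstruction $[c_x]\in H^2(\G_k,\Aut(x))$, which vanishes over a field $L\supseteq k$ if and only if $(A_x,i_x,V_x)$ descends to $L$; the class $[c'_x]$ controls descent of the pair $(A_x,i_x)$ in the same way, and the natural inclusion $\Aut(x)\subseteq\Aut(x')$ sends $[c_x]\mapsto[c'_x]$ by construction.

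For part (1), the hypothesis $B\otimes_{\Q}k\cong\M_2(k)$ together with Proposition~\ref{fieldMB} forces $[c'_x]=0$, so $[c_x]$ lies in the kernel of $H^2(\G_k,\Aut(x))\to H^2(\G_k,\Aut(x'))$, and hence in the image of the connecting map
\[
\delta\colon H^1(\G_k,\Aut(x')/\Aut(x))\longrightarrow H^2(\G_k,\Aut(x))
\]
coming from $1\to\Aut(x)\to\Aut(x')\to Q\to 1$. I would then case-split on $(\Aut(x),\Aut(x'))$: since both groups lie in $\{\Z/2\Z,\Z/4\Z,\Z/6\Z\}$, the case $\Aut(x)=\Aut(x')$ is trivial ($Q=1$, so $\delta=0$). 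The only nontrivial case allowed by the hypothesis is $\Aut(x)=\{\pm 1\}$ and $\Aut(x')\cong\Z/6\Z$, giving $Q\cong\Z/3\Z$. Here the sequence $0\to\Z/2\Z\to\Z/6\Z\to\Z/3\Z\to 0$ is $\G_k$-equivariantly split, because the canonical decomposition $\Z/6\Z\cong\Z/2\Z\times\Z/3\Z$ into its $2$- and $3$-primary parts is preserved by every group automorphism of $\Z/6\Z$. This splitting kills $\delta$ and forces $[c_x]=0$, whence the triple descends to $k$.

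For part (2), the assumption $\Aut(x)=\{\pm 1\}$ makes the $\G_k$-action on $\Aut(x)$ trivial, so $[c_x]$ lies in $H^2(\G_k,\mu_2)=\Br_k[2]$. Because $k$ is a number field, the Albert--Brauer--Hasse--Noether theorem gives index equal to exponent in $\Br_k$, so every class of $\Br_k[2]$ is represented by a quaternion algebra $D/k$ (the trivial algebra when $[c_x]=0$). I would take $K$ to be a maximal subfield of $D$ when $D$ is non-split, and any quadratic extension of $k$ otherwise; in either case $K/k$ is quadratic and $[c_x]|_K=0$, so descent yields a model of $(A_x,i_x,V_x)$ over $K$.

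The only non-formal step is the $\G_k$-equivariant splitting of $0\to\Z/2\Z\to\Z/6\Z\to\Z/3\Z\to 0$ in part (1); the remainder is naturality of obstruction cocycles plus the standard global-field fact that Brauer-group exponent equals index. Some bookkeeping is needed to verify that the descent produced by the vanishing of $[c_x]$ automatically preserves the QM-structure and the submodule $V_x$, but this is built into the choice $\Aut(x)=\Aut_{\cO}(A_x,V_x)$, which already encodes both compatibilities.
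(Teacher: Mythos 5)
Your proposal is correct and follows essentially the same route as the paper: both parts reduce to the vanishing of the descent obstruction $[c_x]$, with part (1) resting on the fact that $\Aut(x)$ is a $\G_k$-equivariant direct factor of $\Aut(x')$ in the only nontrivial case $\Z/2\Z\subset\Z/6\Z$ (the paper phrases this as injectivity of $\Phi$ rather than vanishing of the connecting map, which is the same thing), and part (2) on identifying $H^2(\G_k,\{\pm1\})$ with $\Br_k[2]$ and splitting the corresponding quaternion algebra by a quadratic extension.
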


\begin{proof}

Let $M$ be a finite extension of $k$.
Then we can take $(A_x,i_x,V_x)$ (\resp $(A_x,i_x)$) to be defined over $M$
if and only if $[c_x]_M=0$ in $H^2(\G_M,\Aut(x))$
(\resp $[c'_x]_M=0$ in $H^2(\G_M,\Aut(x'))$)
(\cf \cite[Main Theorem, p.304 and Final Note, p.336]{DD}).

(1)
In this case we have $[c'_x]=0$ in $H^2(\G_k,\Aut(x'))$
by Proposition \ref{fieldMB}.
Let $\Phi:H^2(\G_k,\Aut(x))\longrightarrow H^2(\G_k,\Aut(x'))$
be the map induced from the inclusion $\Aut(x)\subseteq\Aut(x')$.
If $\Aut(x)\ne\{\pm 1\}$, then $\Aut(x)=\Aut(x')$ and the map $\Phi$
is the identity map (so $\Phi$ is injective in particular).
Therefore $[c_x]=0$.
If $\Aut(x')\not\cong\Z/4\Z$, then $\Aut(x)$ is a direct factor of $\Aut(x')$
as a $\G_k$-module.
Therefore $\Phi$ is injective, and so $[c_x]=0$.

(2)
In this case the action of $\G_k$ on $\Aut(x)=\{\pm 1\}$ is trivial,
and so $\G_k$ acts on $\Aut(x)$ via the mod $2$ cyclotomic character.
Then we have a natural isomorphism
$H^2(\G_k,\Aut(x))\cong\Br_k[2]$.
%where $\Br_k$ is the Brauer group of $k$.
Let $D$ be the quaternion algebra over $k$ corresponding to $[c_x]$.
We know that there is a quadratic extension $K$ of $k$ such that
$D\otimes_k K\cong\M_2(K)$
(see the proof of Lemma \ref{fieldofdef} below for a detailed explanation).
For such an extension $K$, we have $[c_x]_K=0$ in $H^2(\G_K,\Aut(x))$.

\end{proof}

\begin{lem}
\label{fieldofdef}

Let $K$ be a quadratic extension of $k$.
Assume $\Aut(x)=\{\pm 1\}$.
Then the following two conditions are equivalent.

(1)
We can take $(A_x,i_x,V_x)$ to be defined over $K$.

(2)
For any place $v$ of $k$ satisfying $[c_x]_v\ne 0$,
the tensor product $K\otimes_k k_v$ is a field.
%(i.e. $v$ does not split in $K/k$).

\end{lem}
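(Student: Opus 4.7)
The plan is to translate both conditions into an assertion about when a certain quaternion algebra over $k$ is split by $K$, using the cohomological framework developed in the proof of Proposition \ref{fieldM0Bp}.

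First, exactly as in the proof of Proposition \ref{fieldM0Bp}, I would invoke the descent criterion: $(A_x, i_x, V_x)$ can be taken to be defined over $K$ if and only if $[c_x]_K = 0$ in $H^2(\G_K, \Aut(x))$. Since $\Aut(x) = \{\pm 1\}$ carries the trivial $\G_k$-action, identifying $\{\pm 1\}$ with $\mu_2$ gives the natural Kummer isomorphism $H^2(\G_k, \Aut(x)) \cong \Br_k[2]$, and $[c_x]$ corresponds to the class of a quaternion algebra $D$ over $k$. The same isomorphism $H^2(\G_K, \Aut(x)) \cong \Br_K[2]$ is compatible with base change along $k \hookrightarrow K$, so $[c_x]_K$ corresponds to the class of $D \otimes_k K$. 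Therefore (1) is equivalent to $D \otimes_k K \cong \M_2(K)$.

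Next, I would apply the Albert--Brauer--Hasse--Noether theorem: $D \otimes_k K$ is split if and only if $D \otimes_k K_w$ is split at every place $w$ of $K$. For $w \mid v$ one has $D \otimes_k K_w = D_v \otimes_{k_v} K_w$, so the condition is automatic at those $v$ with $[c_x]_v = 0$. At a place $v$ with $[c_x]_v \ne 0$, $D_v$ is the unique quaternion division algebra over $k_v$, and by the local theory of quaternion algebras it is split by a finite extension $L/k_v$ exactly when $[L:k_v]$ is even; since $[K_w : k_v] \in \{1, 2\}$, this amounts to $K_w \ne k_v$, i.e.\ $v$ is non-split in $K$.

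Finally, I would combine this with the elementary decomposition $K \otimes_k k_v = \prod_{w \mid v} K_w$, which is a field precisely when $v$ is non-split in $K$ (so that there is a single $w$ above $v$ and $K_w$ is a nontrivial quadratic extension of $k_v$). Hence (1) holds if and only if for every $v$ with $[c_x]_v \ne 0$, $v$ does not split in $K$, equivalently $K \otimes_k k_v$ is a field, which is exactly (2). The only step requiring any care is the naturality of the identification $H^2(-,\mu_2) \cong \Br(-)[2]$ under restriction, guaranteeing that $[c_x]_K$ really is represented by $D \otimes_k K$; once this is in place the rest is a direct application of the Hasse principle and standard local splitting facts for quaternion algebras.
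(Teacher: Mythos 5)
Your proposal is correct and follows essentially the same route as the paper: translate condition (1) into the vanishing of $[c_x]_K$, identify $[c_x]$ with a quaternion algebra $D$ over $k$ via $H^2(\G_k,\Aut(x))\cong\Br_k[2]$, and reduce to the local splitting of $D\otimes_k K_w$ at all places via the Hasse principle. The paper's proof is just a terser version of yours, leaving the local splitting criterion and the decomposition $K\otimes_k k_v=\prod_{w\mid v}K_w$ implicit.
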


\begin{proof}

The condition (1) holds if and only if
$[c_x]_K=0$ in $H^2(\G_K,\Aut(x))$.
Let $D$ be the quaternion algebra over $k$ corresponding to $[c_x]$.
We have $[c_x]_K=0$ if and only if
$D\otimes_k K_w\cong\M_2(K_w)$ for any place $w$ of $K$.
The last condition is equivalent to the following:
for any place $v$ of $k$ where $D$ ramifies, 
the tensor product $K\otimes_k k_v$ is a field.

%$v$ does not split in $K/k$ for any (finite) place $v$ dividing
%$\disc D$.
%The place $v$ divides $\disc D$ if and only if $[c]_v=-1$,
%as required.
\end{proof}

\begin{rmk}
\label{Kexists}
\rm

We can explicitly construct a quadratic extension $K$ of $k$
in Lemma \ref{fieldofdef} (2) as explained below.
First notice that $k$ has no real places owing to the assumption
$M_0^B(p)(k)\ne\emptyset$.
So, for any infinite place $v$ of $k$,
we have $[c_x]_v=0$.
Let $\mfp_1,\cdots,\mfp_m$, $\mfq_1,\cdots,\mfq_n$ be distinct primes
of $k$.
For simplicity, assume that $2$ is not divisible by any $\mfp_i$;
this is enough for later use.
By the Chinese Remainder Theorem, we can choose an element $t\in\cO_k$
satisfying the following two conditions.

(i) $t\bmod{\mfp_i}$ is not a square in $\cO_k/\mfp_i$ for any $i$.

(ii) $t\bmod{\mfq_j^2}$ is a non-zero element in $\mfq_j/\mfq_j^2$
for any $j$.

\noindent
If we put $K=k(\sqrt{t})$, then each $\mfp_i$ (\resp $\mfq_j$)
 is inert (\resp ramified) in $K/k$.

\end{rmk}
%

%\vspace{10mm}

\section{Classification of characters (I)}
\label{charI}

%{\bf Case (I)}:
We keep the notation in Section \ref{fieldofdefinition}.
Throughout this section,
assume $\Aut(x)=\{\pm 1\}$.
Let $K$ be a quadratic extension of $k$
which satisfies the equivalent conditions in
Lemma \ref{fieldofdef}.
%If $B\otimes_{\Q}k\cong\M_2(k)$, we impose no restrictions
%on the quadratic extension $K$.
%Note that the quadratic extension $K$ has no restriction
%if $B\otimes_{\Q}k\cong\M_2(k)$.
Then $x$ is represented by a triple $(A,i,V)$,
where $(A,i)$ is a QM-abelian surface over $K$ and $V$ is a left
$\cO$-submodule of $A[p](\Kb)$ with $\F_p$-dimension $2$
stable under the action of $\G_K$.
Let
$$\lambda:\G_K\longrightarrow\F_p^{\times}$$
be the character associated to $V$ in (2.2). %(\ref{lambda}).
For a prime $\mfl$ of $k$ (\resp $K$), let $I_{\mfl}$ denote the inertia
subgroup of $\G_k$ (\resp $\G_K$) at $\mfl$.

\begin{lem}
\label{lambdaunr}

The character $\lambda^{12}$ is unramified at every prime of
$K$ not dividing $p$.

\end{lem}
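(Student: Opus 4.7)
The plan is to argue prime-by-prime: fix a prime $\mathfrak{l}$ of $K$ not dividing $p$, and show $\lambda^{12}|_{I_\mathfrak{l}} = 1$. The central structural input is that $A$ has potentially good reduction at every finite prime of $K$. This follows from the properness of $M^B$: any toric part of the semistable reduction of $A$ would be $\cO$-stable, so its character group would be a $B$-module of even $\Z$-rank; since $\dim A = 2$, the only options are good reduction or entirely toric reduction, and the latter is excluded because $M^B$ has no cusps.

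If $A$ has good reduction at $\mathfrak{l}$, then by the criterion of N\'eron--Ogg--Shafarevich, $I_\mathfrak{l}$ acts trivially on $T_pA$ (as $\mathfrak{l} \nmid p$), and hence $\lambda|_{I_\mathfrak{l}} = 1$ directly. So assume $A$ has bad (but potentially good) reduction at $\mathfrak{l}$. Choose a finite Galois extension $L/K_\mathfrak{l}$ over which $A_L$ acquires good reduction, and let $\tilde A$ be the special fiber of its N\'eron model over $\cO_L$. Applying N\'eron--Ogg--Shafarevich over $L$, the subgroup $I_L \subseteq I_\mathfrak{l}$ acts trivially on $T_pA$, so $\lambda|_{I_\mathfrak{l}}$ factors through the finite quotient $I_\mathfrak{l}/I_L$. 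By functoriality of the N\'eron model, this quotient embeds into $\Aut_{\cO}(\tilde A)$; and $\G_K$-stability of $V$ places the image inside the upper-triangular Borel of $\Aut_{\cO \otimes \Z_p}(T_pA) \cong \GL_2(\Z_p)$, under which $\lambda|_{I_\mathfrak{l}}$ becomes the reduction modulo $p$ of the top-left diagonal character.

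The main step---and the principal obstacle---is to bound the order of the image of $\lambda|_{I_\mathfrak{l}}$ in $\F_p^\times$ by $12$. The idea is that $\End_{\cO}(\tilde A) \otimes_\Z \Q$ is either $\Q$, an imaginary quadratic field (in the non-CM or CM ordinary cases), or a definite quaternion algebra over $\Q$ (in the supersingular case)---in every case, its unit group modulo the center is finite, so every element of $\Aut_{\cO}(\tilde A)$ is a root of unity. Any root of unity in such an algebra generates an imaginary quadratic subfield, and the only imaginary quadratic fields containing a primitive $n$th root of unity are $\Q(i) = \Q(\zeta_4)$ and $\Q(\zeta_3) = \Q(\zeta_6)$; hence $n$ divides $\mathrm{lcm}(4,6) = 12$. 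Consequently, the top-left Borel entry of any element of the image of $I_\mathfrak{l}$ is an eigenvalue of a finite-order matrix whose order divides $12$, so its reduction mod $p$ satisfies $\lambda^{12}|_{I_\mathfrak{l}} = 1$. The main technical care is in setting up the embedding $I_\mathfrak{l}/I_L \hookrightarrow \Aut_{\cO}(\tilde A)$ precisely via the N\'eron model and in uniformly handling the ordinary vs.\ supersingular dichotomy for $\tilde A$; the supersingular case is where the enlarged endomorphism algebra makes the bound non-obvious and where I expect the bulk of the technical work.
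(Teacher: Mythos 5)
Your proof is correct and follows essentially the same route as the paper: at a prime $\mathfrak{l}\nmid p$, inertia acts on $T_pA$ through a finite quotient all of whose elements have order in $\{1,2,3,4,6\}$, hence dividing $12$, and N\'eron--Ogg--Shafarevich finishes. The only difference is that the paper obtains this order bound by citing Jordan (Proposition 3.4(1) of \cite{J}), whereas you re-derive it from the finiteness and root-of-unity structure of $\Aut_{\cO}(\widetilde{A})$ for the good reduction over a suitable finite extension --- which is in substance a proof of the cited proposition.
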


\begin{proof}

Let $\mfl$ be a prime of $K$ with residual characteristic $l$.
If $d\ne 6$ or $l\geq 5$,
then $A\otimes_K K_{\mfl}^{unr}$ has good reduction after
a cyclic extension $M/K_{\mfl}^{unr}$ of degree $1,2,3,4$ or $6$,
where $K_{\mfl}^{unr}$ is the maximal unramified extension of $K_{\mfl}$
(\cite[Proposition 3.4 (1), p.101]{J}).
Suppose $d=6$ and $l\leq 3$.
Then $A\otimes_K K_{\mfl}^{unr}$ has good reduction after a
finite Galois extension $M/K_{\mfl}^{unr}$ such that
every element of $\Gal(M/K_{\mfl}^{unr})$ ($\cong I_{\mfl}/\G_M$) is of order $1,2,3,4$ or $6$
%We can take each element of $G$ to be of order $1,2,3,4$ or $6$
(loc.\ cit.).
If $\mfl$ does not divide $p$, then
the action of $\G_M$ (which is a subgroup of $I_{\mfl}$)
on the Tate module $T_pA$ is trivial, and so $\lambda|_{\G_M}=1$.
For any $\sigma\in I_{\mfl}$, we have $\sigma^{12}\in\G_M$
and $\lambda^{12}(\sigma)=\lambda(\sigma^{12})=1$.
Therefore $\lambda^{12}$ is unramified at $\mfl$.

\end{proof}

Let $\lambdaab:\Gab_K\longrightarrow\F_p^{\times}$
be the natural map induced from $\lambda$.
Put
\begin{equation}
\label{phi}
\varphi:=\lambdaab\circ\tr_{K/k}:\G_k\longrightarrow\Gab_K
\longrightarrow\F_p^{\times}, \nonumber
\leqno(5.1)
\end{equation}
where $\tr_{K/k}:\G_k\longrightarrow\Gab_K$ is the transfer map.
Notice that the induced map
$\trab_{K/k}:\Gab_k\longrightarrow\Gab_K$
from $\tr_{K/k}$ corresponds to the natural inclusion
$\kA^{\times}\hookrightarrow\KA^{\times}$
via class field theory
(\cite[Theorem 8 in \S 9 of Chapter XIII, p.276]{We}).

\begin{cor}
\label{phiunr}

The character $\varphi^{12}$ is unramified at every prime of
$k$ not dividing $p$.

\end{cor}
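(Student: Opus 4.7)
My plan is to exploit the local description of the transfer map coming from class field theory and then invoke Lemma \ref{lambdaunr} directly. Fix a prime $\mfl$ of $k$ not dividing $p$. I want to show that $\varphi^{12}$ vanishes on the inertia subgroup $I_\mfl\subseteq\G_k$. Since $\varphi=\lambdaab\circ\tr_{K/k}$ factors through $\Gab_k$, it suffices to locate the image $\trab_{K/k}(I_\mfl)$ inside $\Gab_K$ and then apply $(\lambdaab)^{12}$ to it.

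The first step is to use the identification, recalled just before the corollary, that $\trab_{K/k}$ corresponds under class field theory to the natural inclusion $\kA^\times\inj\KA^\times$. Localized at $\mfl$, this inclusion restricts to the diagonal embedding $k_\mfl^\times\inj\prod_{w\mid\mfl}K_w^\times$, and in particular sends $\cO_{k_\mfl}^\times$ diagonally into $\prod_{w\mid\mfl}\cO_{K_w}^\times$. Under the local reciprocity maps, $\cO_{k_\mfl}^\times$ corresponds to the image of $I_\mfl$ in $\Gab_k$, while each $\cO_{K_w}^\times$ corresponds to the image of $I_w$ in $\Gab_K$. Thus $\trab_{K/k}(I_\mfl)$ lies in the subgroup of $\Gab_K$ generated by the images of the inertia subgroups $I_w$ for $w\mid\mfl$.

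The second step is the reduction to Lemma \ref{lambdaunr}: since $\mfl$ does not divide $p$, no prime $w$ of $K$ above $\mfl$ divides $p$, so $\lambda^{12}|_{I_w}=1$ for every such $w$. Therefore $(\lambdaab)^{12}$ is trivial on the image of each $I_w$, and by the containment established in the previous paragraph, $(\lambdaab)^{12}\circ\trab_{K/k}$ is trivial on $I_\mfl$. This is exactly the statement that $\varphi^{12}$ is unramified at $\mfl$.

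The only real obstacle I expect is bookkeeping around the normalization of the reciprocity isomorphisms and the passage $\tr_{K/k}\mapsto\trab_{K/k}$: I will need to confirm that the identification of the decomposition/inertia filtration on the Galois side with the unit filtration on the idelic side is compatible on both sides of the transfer, so that no inverse or sign discrepancy enters. Given the explicit citation of \cite[Chapter XIII \S 9]{We}, this should be routine.
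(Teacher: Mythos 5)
Your argument is correct and is essentially the paper's own proof: both identify $\varphi^{12}|_{I_{\mfl}}$ via class field theory with the composite of the diagonal embedding $\cO_{k_{\mfl}}^{\times}\hookrightarrow\prod_{w\mid\mfl}\cO_{K_w}^{\times}$ and the map induced by $\lambda^{12}$, then conclude by Lemma \ref{lambdaunr}. No substantive difference.
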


\begin{proof}

Let $\mfq$ be a prime of $k$ not dividing $p$.
The restriction $\varphi^{12}|_{I_{\mfq}}$ can be expressed
by class field theory in the 
following form:
$$\varphi^{12}|_{I_{\mfq}}:
\cO_{k_{\mfq}}^{\times}\longrightarrow
\prod_{\mfq_K |\mfq}K_{\mfq_K}^{\times}
\longrightarrow\F_p^{\times},$$
where $\mfq_K$ is a prime of $K$ above $\mfq$,
the left map is the diagonal map
(where we consider $\cO_{k_{\mfq}}^{\times}$ to be a subgroup
of each $K_{\mfq_K}^{\times}$)
and the right map is induced from $\lambda^{12}$.
By Lemma \ref{lambdaunr}, the right map is trivial on
$\displaystyle\prod_{\mfq_K |\mfq}\cO_{K_{\mfq_K}}^{\times}$.
Therefore $\varphi^{12}|_{I_{\mfq}}$ is trivial.

\end{proof}

\begin{lem}
\label{phi4indep}

The character $\varphi^4$ (and so $\varphi^{12}$)
does not depend on the choice of a quadratic extension $K$ of $k$
which satisfies the equivalent conditions in Lemma \ref{fieldofdef}.

\end{lem}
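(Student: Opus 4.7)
The plan is as follows. Let $K$ and $K'$ be two quadratic extensions of $k$ both satisfying the conditions in Lemma \ref{fieldofdef}, and let $(A,i,V)/K$, $(A',i',V')/K'$ be corresponding triples, giving rise to characters $\lambda,\lambda'$ of $\G_K,\G_{K'}$ and $\varphi,\varphi'$ of $\G_k$. I want to show $\varphi^4=(\varphi')^4$. Set $L:=KK'$, so that $[L:K]=[L:K']\in\{1,2\}$.

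The first step is to compare $\lambda|_{\G_L}$ and $\lambda'|_{\G_L}$. The base changes $(A,i,V)\otimes_K L$ and $(A',i',V')\otimes_{K'} L$ are two $L$-forms of the same $\kb$-triple $(A_x,i_x,V_x)$, so by descent theory they differ by a class in $H^1(\G_L,\Aut(x))=H^1(\G_L,\{\pm 1\})=\Hom(\G_L,\{\pm 1\})$, namely a quadratic character $\eta:\G_L\to\{\pm 1\}$. Fixing the $\kb$-identification $V\otimes_K\kb=V_x=V'\otimes_{K'}\kb$ and using that the non-trivial element of $\Aut(x)$ acts on $V_x$ by $-1$, one deduces $\lambda|_{\G_L}=\eta\cdot\lambda'|_{\G_L}$; squaring gives $\lambda^2|_{\G_L}=(\lambda')^2|_{\G_L}$.

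The second step passes to idele class characters via class field theory. Let $\chi_K,\chi_{K'}$ correspond to $\lambda,\lambda'$; as recalled after (5.1), $\varphi$ (\resp $\varphi'$) corresponds to the pullback of $\chi_K$ (\resp $\chi_{K'}$) along the inclusion $\kA^\times\hookrightarrow\KA^\times$ (\resp $\kA^\times\hookrightarrow(K')_{\A}^\times$). The restriction $\lambda|_{\G_L}$ corresponds on $L_{\A}^\times$ to $\chi_K\circ\N_{L/K}$, and similarly for $\lambda'$, so Step 1 translates to
$$\chi_K^{2}\circ\N_{L/K}=\chi_{K'}^{2}\circ\N_{L/K'}\quad\text{on }L_{\A}^\times.$$
For $a\in\kA^\times$ embedded diagonally in $L_{\A}^\times$, the composition $\N_{L/K}\circ(\KA^\times\hookrightarrow L_{\A}^\times)$ is the $[L:K]$-power map, so evaluating the displayed identity at the image of $a$ gives $\varphi^{2[L:K]}(a)=(\varphi')^{2[L:K]}(a)$; since $[L:K]\in\{1,2\}$, this forces $\varphi^4=(\varphi')^4$.

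The main obstacle is Step 1: one must verify carefully that two descents of $(A_x,i_x,V_x)$ from $\kb$ to $L$ genuinely differ by a $\{\pm 1\}$-valued cocycle (this uses $\Aut(x)=\{\pm 1\}$) and, tracking the identification through the Tate module, that this cocycle acts on $V_x$ precisely as the quadratic character $\eta$. Once this is in hand, the remainder is a clean application of the reciprocity dictionary together with the identity $\N_{M/F}\circ(F_{\A}^\times\hookrightarrow M_{\A}^\times)=(\cdot)^{[M:F]}$.
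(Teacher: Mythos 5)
Your proof is correct and follows essentially the same route as the paper: compare the two triples over the compositum, control the discrepancy by $H^1(\G_{\bullet},\{\pm 1\})$, and descend to $k$ via the norm/transfer compatibility of class field theory. The only difference is cosmetic: the paper trivializes the quadratic discrepancy by passing to a further quadratic extension $N$ of the compositum, so that $\lambda_1|_{\G_N}=\lambda_2|_{\G_N}$ and the degree $[N:K_j]=4$ supplies the exponent $4$ directly, whereas you keep the quadratic character $\eta$ on $L=KK'$ and square it away, with $[L:K]=2$ supplying the remaining factor of $2$ in the exponent.
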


\begin{proof}

Let $K_1, K_2$ be distinct quadratic extensions of $k$, and assume that
$x$ is represented by a triple $(A_j,i_j,V_j)$ defined over $K_j$
for $j=1,2$.
Then we have two characters
$\lambda_1:\G_{K_1}\longrightarrow\F_p^{\times}$,
$\lambda_2:\G_{K_2}\longrightarrow\F_p^{\times}$
as in (2.2).
In the same way as (5.1),
put $\varphi_j:=\lambdaab_j\circ\tr_{K_j/k}$ for $j=1,2$.
Put $M:=K_1 K_2$.
Then $(A_1,i_1,V_1)\otimes_{K_1}M$ and $(A_2,i_2,V_2)\otimes_{K_2}M$ are isomorphic
over $\kb$.
%Since $x$ is not a CM point, 
%By assumption we have $\Aut_{\cO}(A_j,V_j)=\{\pm 1\}$ for $j=1,2$.
Since each element of
$H^1(\G_M,\Aut(x))=\Hom(\G_M,\{\pm 1\})$
is trivialized by a quadratic extension of $M$,
there is a quadratic extension $N$ of $M$ such that
$(A_1,i_1,V_1)\otimes_{K_1}M$ and $(A_2,i_2,V_2)\otimes_{K_2}M$
are isomorphic over $N$
(\cf \cite[Theorem 2.2 in \S 2 of Chapter X, p.318]{Si}).
Hence $\lambda_1|_{\G_N}=\lambda_2|_{\G_N}$.
We have the following commutative diagram for $j=1,2$ via class field theory:
\begin{equation*}
\begin{CD}
\kA^{\times}@>\text{$\subseteq$}>> {K_j}_{\A}^{\times}@>\text{$\lambda_j^4$}>> \F_p^{\times} \\
@V\text{$\subseteq$}VV @VVV @V\text{id}VV \\
N_{\A}^{\times}@>\text{$\Norm_{N/K_j}$}>> {K_j}_{\A}^{\times}@>\text{$\lambda_j$}>> \F_p^{\times},
\end{CD}
\end{equation*}
where the middle vertical map is $x\mapsto x^4$.
%The upper horizontal line corresponds to $\varphi_i^4$ and
%the lower to $\lambda_i|_{\G_M}$.
%Therefore we get $\varphi_1^4=\varphi_2^4$.
Then
$\varphi_j^4=(\lambda_j|_{\G_N})^{\text{ab}}
\circ (\tr_{N/k}:\G_k\longrightarrow\Gab_N)$ for $j=1,2$,
where $(\lambda_j|_{\G_N})^{\text{ab}}:\Gab_N\longrightarrow\F_p^{\times}$
is the natural map induced from the restriction $\lambda_j|_{\G_N}$.
Therefore $\varphi_1^4=\varphi_2^4$.

\end{proof}

Let $\theta_p$ denote the mod $p$ cyclotomic character.
By Lemma \ref{lambdaunr} (\resp Corollary \ref{phiunr}),
$\lambda^{12}$ (\resp $\varphi^{12}$) corresponds to a character of the
ideal group $\mfI_K(p)$ (\resp $\mfI_k(p)$) consisting of
fractional ideals of $K$ (\resp $k$) prime to $p$.
By abuse of notation, let denote also by $\lambda^{12}$ (\resp $\varphi^{12}$)
the corresponding character
of $\mfI_K(p)$ (\resp $\mfI_k(p)$).

\begin{lem}
\label{epsilon}

(1)
Let $\cP$ be a prime of $K$ lying over $p$.
Assume $p\geq 5$ and that $\cP$ is unramified in $K/\Q$.
Then $\lambda^{12}|_{I_{\cP}}=\theta_p^b$
for an element $b\in\{0,4,6,8,12\}$.
Further we can take $b\in\{0,6,12\}$ (\resp $b\in\{0,4,8,12\}$)
if $p\not\equiv 2\bmod{3}$ (\resp $p\not\equiv 3\bmod{4}$).

(2)
Assume that $k$ is Galois over $\Q$.
Suppose $p\geq 5$ and that $p$ is unramified in $k$.
Take a prime $\mfp$ of $k$ lying over $p$.
Then there exists an element
$$\varepsilon=\sum_{\sigma\in\Gal(k/\Q)}a_{\sigma}\sigma\in\Z[\Gal(k/\Q)]$$
with $a_{\sigma}\in\{0,8,12,16,24\}$
satisfying the following three conditions.

(i)
$\varphi^{12}(\alpha\cO_k)\equiv\alpha^{\varepsilon}\bmod{\mfp}$
for each element $\alpha\in k^{\times}$ prime to $p$.

%(ii)
%For $\sigma,\sigma'\in\Gal(k/\Q)$,
%we have $a_{\sigma}=a_{\sigma'}$
%if $\mfp^{\sigma}=\mfp^{\sigma'}$.

(ii)
If $p\not\equiv 2\bmod{3}$ (\resp $p\not\equiv 3\bmod{4}$),
then
$a_{\sigma}\in\{0,12,24\}$ (\resp $a_{\sigma}\in\{0,8,16,24\}$)
for any $\sigma\in\Gal(k/\Q)$.

%The case $a_{\sigma}=8,16$ (\resp $12$)
%can occur only when $p\equiv 2\bmod{3}$ (\resp $p\equiv 3\bmod{4}$).

(iii)
$\varphi^{12}|_{I_{\mfp^{\sigma^{-1}}}}=\theta_p^{a_{\sigma}}$
for any $\sigma\in\Gal(k/\Q)$.

\end{lem}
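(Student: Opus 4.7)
My plan is to prove (1) via $p$-adic Hodge theory and Raynaud's classification of finite flat group schemes, and then to deduce (2) by idelic class field theory. For (1), I would first apply \cite[Proposition 3.4(1)]{J} (as in the proof of Lemma~\ref{lambdaunr}) to obtain a totally ramified extension $L/K_\cP^{\mathrm{unr}}$ of degree $e\mid 12$ over which $A$ acquires good reduction; the hypothesis $p\ge 5$ rules out the exceptional small-residue-characteristic case. Since $\cP$ is unramified in $K/\Q$, the absolute ramification index of $L/\Q_p$ is exactly $e$, and the level-$1$ fundamental character $\psi_L$ of $I_L$ satisfies $\psi_L^{\,e}=\theta_p|_{I_L}$. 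The $\cO$-stable subspace $V$ remains $\G_L$-stable and corresponds to a rank-$1$ subquotient of the finite flat group scheme associated to $A[p]_{/\cO_L}$; because the $2$-dimensional representation $\rho_{A,p}|_{\G_L}$ is crystalline with Hodge--Tate weights $\{0,1\}$, Raynaud's theorem forces $\lambda|_{I_L}=\psi_L^{\,a}$ for some integer $0\le a\le e$. On the other hand, $\lambda|_{I_\cP}$ factors through tame inertia and, since $K_\cP/\Q_p$ is unramified, is of the form $\theta_p^{\,c}$; restriction to $I_L$ gives $ce\equiv a\pmod{p-1}$, hence $\lambda^{12}|_{I_\cP}=\theta_p^{\,12a/e}$. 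A case analysis over $e\in\{1,2,3,4,6,12\}$, combined with the stability of $V$ under the larger group $\G_K$ (not merely $\G_L$) and its $\cO$-submodule structure, constrains $12a/e$ to $\{0,4,6,8,12\}$; the refinements arise because $b\in\{4,8\}$ (from $e=3$) requires $A$ to have CM accommodating a primitive cube root of unity, and $b=6$ (from $e=2$) requires CM by $\Q(i)$, both of which are ruled out by combining the Galois-stability of $V$ with the presence of $\mu_3$ or $\mu_4$ in $\F_p^\times$ when $p\equiv 1\pmod 3$ or $p\equiv 1\pmod 4$ respectively.

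For (2), I would use the idelic description of the transfer map recalled after (5.1). By Corollary~\ref{phiunr}, $\varphi^{12}$ is unramified outside $p$ and hence corresponds to a character on $\mfI_k(p)$, determined by its local behavior at primes above $p$. For each $\sigma\in\Gal(k/\Q)$ the prime $\mfp^{\sigma^{-1}}$ is unramified over $p$ by hypothesis; the identification of $\tr_{K/k}$ with the inclusion $\kA^\times\hookrightarrow\KA^\times$ expresses the local component of $\varphi^{12}$ at $\mfp^{\sigma^{-1}}$ as the composition
\[ \cO_{k_{\mfp^{\sigma^{-1}}}}^\times \hookrightarrow \prod_{\cP\mid\mfp^{\sigma^{-1}}} \cO_{K_\cP}^\times \xrightarrow{\prod\lambda^{12}} \F_p^\times. \]
Applying part (1) to each $\cP$ above $\mfp^{\sigma^{-1}}$ and observing that the sum of local degrees $\sum_{\cP\mid\mfp^{\sigma^{-1}}}[K_\cP:k_{\mfp^{\sigma^{-1}}}]$ equals $[K:k]=2$, one obtains $\varphi^{12}|_{I_{\mfp^{\sigma^{-1}}}}=\theta_p^{\,a_\sigma}$ with $a_\sigma$ equal to twice an element of $\{0,4,6,8,12\}$, i.e.\ $a_\sigma\in\{0,8,12,16,24\}$. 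This establishes (iii), and (ii) follows at once from the corresponding refinements in (1). Finally, (i) is obtained by decomposing $\alpha\cO_k$ as a product of prime ideals (contributions from primes prime to $p$ vanishing by Corollary~\ref{phiunr}), evaluating $\varphi^{12}$ factor by factor, and using Artin reciprocity together with the fact that the Frobenius at $\mfp^{\sigma^{-1}}$ sends $\alpha$ to $\alpha^{\sigma^{-1}}$ modulo $\mfp$, so that the total exponent on $\alpha\bmod\mfp$ assembles into $\alpha^{\varepsilon}$ with $\varepsilon=\sum_\sigma a_\sigma\sigma$.

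The main obstacle is the case-by-case enumeration at the end of part (1). Ruling out the intermediate values of $a$ strictly between $0$ and $e$ --- which would yield exponents $b\notin\{0,4,6,8,12\}$ (for instance $b=3$ when $e=4$, or $b\in\{1,5,7,11\}$ when $e=12$) --- requires exploiting the $\cO$-equivariance of $V$ together with its stability over the smaller field $K$, in conjunction with the classification of $\Aut_\cO(A)$ at CM reduction. This is the technical heart, presumably following the elliptic-curve argument of \cite{Mo} adapted to the quaternionic setting.
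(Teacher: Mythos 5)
Your overall architecture (Raynaud plus tame inertia for (1), then id\`elic class field theory for (2)) matches the paper's, but there are two genuine gaps. In part (1), the step you yourself identify as ``the technical heart'' --- excluding the intermediate exponents and establishing the congruence refinements --- is not actually carried out: you defer it to an unspecified CM/automorphism classification argument ``presumably following \cite{Mo}''. The paper needs no such argument. It notes that \cite[Proposition 3.4 (1)]{J} gives $e_M\in\{1,2,3,4,6\}$ (not all divisors of $12$ --- your case $e=12$, which produces the worst exponents $b\in\{1,5,7,11\}$, simply does not occur), and that one may enlarge $M$ so that $e_M=4$ or $6$. Then, writing $\lambda|_{I_M}=\theta_M^{\langle ce_M\rangle_{p-1}}$ and comparing with Raynaud's bound $0\le a\le e_M$, the key observation is purely arithmetic: $ce_M$ is even and $p-1$ is even, so $\langle ce_M\rangle_{p-1}$ is even, which immediately restricts the exponent to $\{0,2,4\}$ (for $e_M=4$) or $\{0,2,4,6\}$ (for $e_M=6$), i.e.\ $b=12\langle ce_M\rangle_{p-1}/e_M\in\{0,6,12\}$ or $\{0,4,8,12\}$. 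The refinements likewise follow from divisibility, not CM: e.g.\ $\langle 4c\rangle_{p-1}=2$ forces $p-1\mid 4c-2$, hence $4\nmid p-1$, i.e.\ $p\equiv 3\bmod 4$; similarly $\langle 6c\rangle_{p-1}\in\{2,4\}$ forces $3\nmid p-1$. Your proposed mechanism via $\mu_3$ or $\mu_4$ in $\F_p^\times$ and CM reduction is not substantiated and is not needed.

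In part (2) there is a second gap: you treat a prime $\mfp^{\sigma^{-1}}$ that splits in $K/k$ as if ``sum of local degrees $=2$'' automatically doubles a single exponent from part (1). If $\mfp^{\sigma^{-1}}$ splits as $\cP_1\cP_2$, the diagonal embedding gives $\varphi^{12}|_{I_{\mfp^{\sigma^{-1}}}}=\theta_p^{b_1+b_2}$ with $b_1,b_2\in\{0,4,6,8,12\}$ a priori independent, and $b_1+b_2$ (e.g.\ $4+6=10$) need not lie in $\{0,8,12,16,24\}$. The paper avoids this by first replacing $K$ so that every prime of $k$ above $p$ is \emph{inert} in $K/k$ --- a replacement legitimized by Lemma \ref{fieldofdef} (2) together with Lemma \ref{phi4indep}, which guarantees $\varphi^{12}$ is independent of the choice of $K$ --- so that the local norm $\Norm_{K_\cP/k_{\mfp'}}$ genuinely squares the exponent. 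You should incorporate that reduction explicitly; without it your deduction of (ii) and (iii) from (1) does not go through.
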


\begin{proof}

(1)
%We may take $K$ where $p$ is unramified.
%Take a prime $\cP$ of $K$ lying over $\mfp$.
The abelian surface $A\otimes K_{\cP}$ has good reduction
over a totally ramified extension $M/K_{\cP}$ of degree $e_M=1,2,3,4$ or $6$
since $p\geq 5$ (\cite[Proposition 3.4 (1), p.101]{J}).
We may take $e_M=4$ or $6$.
Since $\cP$ is unramified in $K/\Q$, we have
$\lambda|_{I_{\cP}}=\theta_p^c$ for some $c\in\Z$
(\cite[Proposition 5, p.266 and Proposition 8, p.269]{Se}).
Then $\lambda|_{I_M}=\theta_M^{ce_M}$,
where $I_M$ is the inertia subgroup of $G_M$ and
$\theta_M$ is the fundamental character of level $1$ of $M$ in the sense of
\cite[p.267]{Se}.
For two integers $x,y$ with $y>0$,
let $\langle x\rangle_y$ be the unique integer satisfying
$\langle x\rangle_y\equiv x\bmod{y}$
and
$0\leq \langle x\rangle_y<y$
(\cf \cite[p.150]{Ma}).
We show $0\leq \langle ce_M\rangle_{p-1}\leq e_M$.
By \cite[Corollaire 3.4.4, p.270]{Ray}, we have
$\lambda|_{I_M}=\theta_M^a$
for $0\leq a\leq e_M$.
This combined with $\lambda|_{I_M}=\theta_M^{\langle ce_M\rangle_{p-1}}$
implies that
$\langle ce_M\rangle_{p-1}-a$ is divisible by $p-1$.
If $e_M\geq p-1$, then
$0\leq \langle ce_M\rangle_{p-1}<p-1\leq e_M$.
If $e_M<p-1$, then
$-(p-1)<-e_M\leq \langle ce_M\rangle_{p-1}-a<p-1$,
and so
$\langle ce_M\rangle_{p-1}=a$.
Therefore
$0\leq \langle ce_M\rangle_{p-1}\leq e_M$.

We claim
$\lambda^{12}|_{I_{\cP}}=\theta_p^b$
for an element $b\in\{0,4,6,8,12\}$.

First assume $e_M=4$.
Since $p-1$ is even, we have
$\langle ce_M\rangle_{p-1}=\langle 4c\rangle_{p-1}=0,2$ or $4$.
If $\langle 4c\rangle_{p-1}=0$ or $4$,
then
$\lambda^{12}|_{I_{\cP}}=(\theta_p^{4c})^3=1$ or $\theta_p^{12}$.
If $\langle 4c\rangle_{p-1}=2$,
%(this case can occur only when $p\equiv 3\bmod{4}$ because $4$ cannot
%divide $p-1$),
then
$\lambda^{12}|_{I_{\cP}}=(\theta_p^{4c})^3=\theta_p^6$.
In this case, since $p-1$ divides $4c-2$, we have $p\equiv 3\bmod{4}$.

Next assume $e_M=6$.
Since $p-1$ is even, we have
$\langle ce_M\rangle_{p-1}=\langle 6c\rangle_{p-1}=0,2,4$ or $6$.
If $\langle 6c\rangle_{p-1}=0$ or $6$,
then
$\lambda^{12}|_{I_{\cP}}=(\theta_p^{6c})^2=1$ or $\theta_p^{12}$.
If $\langle 6c\rangle_{p-1}=2$ or $4$,
%(this case can occur only when $p\equiv 2\bmod{3}$ because $3$ cannot
%divide $p-1$),
then
$\lambda^{12}|_{I_{\cP}}=(\theta_p^{6c})^2=\theta_p^4$ or $\theta_p^8$.
In this case, since $p-1$ divides $6c-2$ or $6c-4$, we have $p\equiv 2\bmod{3}$.

(2)
By replacing $K$ if necessary,
we may assume that every prime of $k$ above $p$ is inert in $K/k$
(see the condition (2) in Lemma \ref{fieldofdef}).
Let $\mfp'$ be a prime of $k$ above $p$, and
let $\cP$ be the prime of $K$ above $\mfp'$.
Then the restriction $\varphi^{12}|_{I_{\mfp'}}$ has the
following form via class field theory:
$$\varphi^{12}|_{I_{\mfp'}}:
\cO_{k_{\mfp'}}^{\times}\longrightarrow\cO_{K_{\cP}}^{\times}
\longrightarrow\F_p^{\times},$$
where the left map is the natural inclusion and the right map is
$\Norm_{K_{\cP}/\Q_p}^{-b_{\mfp'}}\bmod{p}$
with $b_{\mfp'}\in\{0,4,6,8,12\}$ as in (1).
Then
$\varphi^{12}|_{I_{\mfp'}}(x)\equiv\Norm_{K_{\cP}/\Q_p}(x)^{-b_{\mfp'}}
=(\Norm_{k_{\mfp'}/\Q_p}\circ\Norm_{K_{\cP}/k_{\mfp'}}(x))^{-b_{\mfp'}}
=\Norm_{k_{\mfp'}/\Q_p}(x)^{-2b_{\mfp'}}\bmod{p}$
for any $x\in\cO_{k_{\mfp'}}^{\times}$.
Hence $\varphi^{12}|_{I_{\mfp'}}=\theta_p^{2b_{\mfp'}}$.
For each element $\alpha\in k^{\times}$ prime to $p$, we have
$$\varphi^{12}(\alpha\cO_k)
=\varphi^{12}(1,\ldots,1,\alpha,\alpha,\ldots)
=\varphi^{12}(\alpha^{-1},\ldots,\alpha^{-1},1,1,\ldots)$$
$$=\prod_{\mfp'|p}\varphi^{12}|_{I_{\mfp'}}(\alpha^{-1})
\equiv\prod_{\mfp'|p}\Norm_{k_{\mfp'}/\Q_p}(\alpha)^{2b_{\mfp'}}\bmod{p}
\equiv\prod_{D_{\mfp}\cdot\sigma\in (D_{\mfp}\backslash\Gal(k/\Q))}
\prod_{\tau\in D_{\mfp}\cdot\sigma}\alpha^{2(b_{\mfp^{\sigma}})\tau^{-1}}\bmod{\mfp}$$
by class field theory again.
Here $(1,\ldots,1,\alpha,\alpha,\ldots)$
(\resp $(\alpha^{-1},\ldots,\alpha^{-1},1,1,\ldots)$)
is the element of $\kA^{\times}$
whose components above $p$ are $1$ and the others $\alpha$
(\resp whose components above $p$ are $\alpha^{-1}$ and the others $1$),
and $D_{\mfp}$ is the decomposition group of $\Gal(k/\Q)$ at $\mfp$.
%Since $p\geq 19$, we can distinguish $8,12,16$ from $0,24$ modulo $p-1$.

\end{proof}

For a prime number $q$, put
$$\cFR(q):=\Set{\beta\in\C|
\beta^2+a\beta+q=0 \text{ for some integer $a\in\Z$ with $|a|\leq 2\sqrt{q}$}}.$$
Notice that $|a|\leq 2\sqrt{q}$ implies $|a|<2\sqrt{q}$
since $2\sqrt{q}$ is not a rational number.
Notice also that for any $\beta\in\cFR(q)$,
we have $\beta\not\in\R$ and $|\beta|=\sqrt{q}$.
Recall that $h_k$ is the class number of $k$.

\begin{lem}
\label{type23epsilon}

Under the situation in Lemma \ref{epsilon} (2), take a prime number
q different from $p$. Assume that $q$ splits completely in $k$,
and take a prime $\mfq$ of $k$ lying over $q$.
Let $\alpha\in\cO_k\setminus\{0\}$ be an element such that
$\mfq^{h_k}=\alpha\cO_k$.
If $\alpha^{\varepsilon}=\beta^{24h_k}$
for an  element $\beta\in\cFR(q)$,
%a Frobenius eigenvalue $\beta$ on some QM-abelian surface
%defined over $\F_q$, 
then $\varepsilon$ is of one of the following types.

Type 2:
$\varepsilon=\displaystyle\sum_{\sigma\in\Gal(k/\Q)}12\sigma$
and $p\equiv 3\bmod{4}$.

Type 3:
$k$ contains $\Q(\beta)$, and
$\varepsilon=\displaystyle\sum_{\sigma\in\Gal(k/\Q(\beta))}24\sigma$
or $\displaystyle\sum_{\sigma\not\in\Gal(k/\Q(\beta))}24\sigma$.
%Further the prime number $p$ splits in $L$.

\end{lem}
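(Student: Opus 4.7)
The plan is to compare both sides of $\alpha^\varepsilon=\beta^{24h_k}$ via their factorizations into primes of $\cO_k$, and to split into two cases according to whether $\Q(\beta)$ embeds in $k$. Since $k/\Q$ is Galois and $q$ splits completely, $G=\Gal(k/\Q)$ acts simply transitively on the primes of $k$ above $q$, so $\{\mfq^\sigma:\sigma\in G\}$ is exactly that set of primes, all distinct. From $\alpha\cO_k=\mfq^{h_k}$ we get $\sigma(\alpha)\cO_k=(\mfq^\sigma)^{h_k}$, hence
$$(\alpha^\varepsilon)\cO_k=\prod_{\sigma\in G}(\mfq^\sigma)^{a_\sigma h_k}.$$
Comparing with $(\beta^{24h_k})$ forces $\beta^{24h_k}\in k\cap\Q(\beta)$, which is $\Q(\beta)$ if $\Q(\beta)\subset k$ and $\Q$ otherwise.

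In the first case I would introduce $H=\Gal(k/\Q(\beta))$. Since $q$ splits completely in $\Q(\beta)$ as $(\beta)(\overline{\beta})$, I may normalize $\mfq\cap\Q(\beta)=(\beta)$ (swapping $\beta$ and $\overline{\beta}$ if necessary), and then the identity $\sigma(\beta)=\beta\iff\sigma\in H$ shows that $\mfq^\sigma$ lies above $(\beta)$ precisely for $\sigma\in H$. Hence $(\beta^{24h_k})\cO_k=\prod_{\sigma\in H}(\mfq^\sigma)^{24h_k}$, and unique factorization yields $a_\sigma=24$ for $\sigma\in H$ and $a_\sigma=0$ otherwise, giving the first presentation of Type 3; the opposite normalization gives the complementary presentation.

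In the second case, $\beta^{24h_k}\in\Q$ has absolute value $q^{12h_k}$, so $\beta^{24h_k}=\pm q^{12h_k}$. Matching ideals in $\cO_k$ via $(q)=\prod_\sigma\mfq^\sigma$ forces $a_\sigma=12$ for all $\sigma$. The direct computation $\alpha^\varepsilon=\Norm_{k/\Q}(\alpha)^{12}=(\pm q^{h_k})^{12}=q^{12h_k}$ then fixes the positive sign. Finally, I would invoke Lemma \ref{epsilon}(2)(ii): if $p\not\equiv 3\bmod 4$, we would need $a_\sigma\in\{0,8,16,24\}$, incompatible with $a_\sigma=12$; hence $p\equiv 3\bmod 4$, giving Type 2.

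The main obstacle will be the bookkeeping in the first case, namely identifying via the subgroup $H$ which primes $\mfq^\sigma$ of $k$ restrict to $(\beta)$ versus $(\overline{\beta})$ in $\Q(\beta)$, and recognizing that the two normalization choices produce exactly the two alternative presentations of Type 3. The remainder is a clean application of unique factorization together with the congruence restrictions on the $a_\sigma$ already recorded in Lemma \ref{epsilon}.
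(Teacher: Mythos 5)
Your argument is correct and follows essentially the same route as the paper's: both compare the prime-ideal factorization of $\alpha^{\varepsilon}\cO_k=\mfq^{h_k\varepsilon}$ with that of $\beta^{24h_k}\cO_k$, split according to whether $\Q(\beta)$ lies in $k$ (equivalently, whether $\alpha^{\varepsilon}$ is rational), and deduce $p\equiv 3\bmod 4$ in Type 2 from condition (ii) of Lemma \ref{epsilon} (2). The only cosmetic difference is that in the Type 3 case you read off the exponents directly from the decomposition of $(\beta)\cO_k$ over the subgroup $\Gal(k/\Q(\beta))$, whereas the paper first uses $\Gal(k/L)$-invariance of $\mfq^{h_k\varepsilon}$ and then factors $(\beta\cO_L)^{24}$ in $\cO_L$.
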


\begin{proof}

Put $L:=\Q(\beta)$. Then $L$ is an imaginary quadratic field.
We have the inclusion of fields 
$\Q\subseteq\Q(\alpha^{\varepsilon})\subseteq L$.
Since $[L:\Q]=2$, we have the following two cases:
(i) $\alpha^{\varepsilon}\in\Q$,
(ii) $\Q(\alpha^{\varepsilon})=L$.

In case (i), we have
$\alpha^{\varepsilon}=\pm q^{12h_k}$, because
$|\alpha^{\varepsilon}|=|\beta^{24h_k}|=q^{12h_k}$ and
$\alpha^{\varepsilon}\in\Q$.
As $q^{12h_k}\cO_k=\alpha^{\varepsilon}\cO_k=\mfq^{h_k\varepsilon}$,
we have
$\mfq^{\varepsilon}=q^{12}\cO_k$.
Since $q$ splits completely in $k$
and
$q\cO_k=\prod_{\sigma\in\Gal(k/\Q)}\mfq^{\sigma}$,
we get
$\varepsilon=\sum_{\sigma\in\Gal(k/\Q)}12\sigma$.
In this case we have $p\equiv 3\bmod{4}$ by the condition (ii)
in Lemma \ref{epsilon} (2).

In case (ii), we have
$k\supseteq\Q(\alpha^{\varepsilon})=L$.
Consider the ideal
$\alpha^{\varepsilon}\cO_k=\mfq^{h_k\varepsilon}
=\prod_{\sigma\in\Gal(k/\Q)}\mfq^{h_k a_{\sigma}\sigma}$.
This is $\Gal(k/L)$-invariant since $\alpha^{\varepsilon}\in L$.
So, for any $\tau\in\Gal(k/L)$, we have
$\prod_{\sigma\in\Gal(k/\Q)}\mfq^{h_k a_{\sigma}\sigma}
=\prod_{\sigma\in\Gal(k/\Q)}\mfq^{h_k a_{\sigma}\sigma\tau}$.
Since %each $\mfq^{\sigma}$ differs from the others, 
$\mfq^{\sigma}\ne\mfq^{\sigma'}$ for $\sigma\ne\sigma'$,
we have
$$\varepsilon=\sum_{\sigma\in\Gal(k/L)}a\sigma
+\sum_{\sigma\not\in\Gal(k/L)}b\sigma$$
where $a,b\in\{0,8,12,16,24\}$.
By the assumption we have
$\beta^{24h_k}\cO_k=\alpha^{\varepsilon}\cO_k
=\mfq^{h_k\varepsilon}$,
and so
$$\beta^{24}\cO_k=\mfq^{\varepsilon}
=\prod_{\sigma\in\Gal(k/L)}\mfq^{a\sigma}
\prod_{\sigma\not\in\Gal(k/L)}\mfq^{b\sigma}.$$
Taking the intersection with $\cO_L$, we have
$\beta^{24}\cO_L=q_L^a {q'_L}^b$
where $q_L,q'_L$ are the two distinct primes of $L$ above $q$
with $q_L=\mfq\cap\cO_L$.
By taking %$\Norm_{L/\Q}$, 
the absolute norms, we see
$q^{24}=q^{a+b}$ and so $a+b=24$.
Applying prime ideal decomposition to
$q_L^a {q'_L}^b=(\beta\cO_L)^{24}$,
we get $(a,b)=(24,0)$ or $(0,24)$.
Therefore
$\varepsilon=\sum_{\sigma\in\Gal(k/L)}24\sigma$
or $\sum_{\sigma\not\in\Gal(k/L)}24\sigma$.
%
%Next we show that $p$ splits in $L=\Q(\alpha^{\varepsilon})$.
%The residue field of $L$ at $\mfp\cap\cO_L$ is $\F_p$
%since $\alpha^{\varepsilon}\bmod{\mfp}\equiv\varphi^{12}(\alpha\cO_k)\in\F_p^{\times}$.
%As $p$ is unramified in $k$, it is also unramified in $L$.
%Therefore $p$ splits in $L$.

\end{proof}

Let $\cM$ be the set of prime numbers $q$ such that $q$ splits
completely in $k$
and $q$ does not divide $6h_k$.
Let $\cN$ be the set of primes $\mfq$ of $k$ such that
$\mfq$ divides some prime number $q\in \cM$.
Let $\mfI_k$ be the ideal group of $k$, and let $\mfP_k$
be the subgroup of $\mfI_k$ consisting of principal ideals.
Take a finite subset $\emptyset\ne \cS\subseteq \cN$ such that
$\cS$ generates the ideal class group $Cl_k=\mfI_k/\mfP_k$.
For each prime $\mfq\in \cS$, fix an element $\alpha_{\mfq}\in\cO_k\setminus\{0\}$
satisfying $\mfq^{h_k}=\alpha_{\mfq}\cO_k$.
%
%By adding (finitely many) primes to $\cS$ if necessary, we may assume the
%following two conditions.

%($\cS$-i)
%The set $\cS$ contains two primes whose residual characteristics are different.

%($\cS$-ii)
%For any prime number $l$, the subset
%$\cS\setminus\{\mfq\in\cS\mid\mfq\ \text{divides}\ l\}$ of $\cS$
%generates $Cl_k$.

For $\mfq\in\cS$,
put $\N(\mfq)=\sharp(\cO_k/\mfq)$.
Then $\N(\mfq)$ is a prime number.
Define the sets

\noindent
$\cM_1(k):=$
$$\Set{(\mfq,\varepsilon_0,\beta_{\mfq})|
\mfq\in \cS,\ \varepsilon_0=\sum_{\sigma\in\Gal(k/\Q)}a_{\sigma}\sigma
\text{ with $a_{\sigma}\in\{0,8,12,16,24 \}$},\ 
\beta_{\mfq}\in\cFR(\N(\mfq))},$$

\noindent
$\cM_2(k):=\Set{\Norm_{k(\beta_{\mfq})/\Q}(\alpha_{\mfq}^{\varepsilon_0}-\beta_{\mfq}^{24h_k})\in\Z|
(\mfq,\varepsilon_0,\beta_{\mfq})\in\cM_1(k)}\setminus\{0\}$,

\noindent
$\cN_0(k):=\Set{\text{$l$ : prime number}|\text{$l$ divides some integer $m\in\cM_2(k)$}}$,

\noindent
$\cT(k):=\Set{\text{$l'$ : prime number}|\text{$l'$ is divisible
by some prime $\mfq'\in \cS$}}
\cup\{2,3\}$,

\noindent
$\cN_1(k):=\cN_0(k)\cup\cT(k)\cup\Ram(k)$.

\noindent
Notice that all of $\cM_1(k)$, $\cM_2(k)$, $\cN_0(k)$, $\cT(k)$,
$\cN_1(k)$ are finite.

\begin{thm}
\label{type23phi}

Assume that $k$ is Galois over $\Q$.
If $p\not\in\cN_1(k)$
(and if $p$ does not divide $d$),
then the character
$\varphi:\G_k\longrightarrow \F_p^{\times}$
%associated to a non-CM point of $M_0^B(p)(k)$ 
is of one of the following types.

Type 2:
$\varphi^{12}=\theta_p^{12}$ and $p\equiv 3\bmod{4}$.

Type 3:
There is an imaginary quadratic field $L$ satisfying the following
two conditions.

%\noindent
%(a)
%The prime number $p$ splits in $L$.

\noindent
(a)
The Hilbert class field $H_L$ of $L$ is contained in $k$.

\noindent
(b)
There is a prime $\mfp_L$ of $L$ lying over $p$
such that
$\varphi^{12}(\mfa)\equiv\delta^{24}\bmod{\mfp_L}$ holds
for any fractional ideal $\mfa$ of $k$ prime to $p$.
Here $\delta$ is any element of $L$ such that
$\Norm_{k/L}(\mfa)=\delta\cO_L$.

\end{thm}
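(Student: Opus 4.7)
The plan is to extract a single element $\varepsilon\in\Z[\Gal(k/\Q)]$ controlling $\varphi^{12}$ on principal ideals, to pin it down by computing Frobenius values of the QM-abelian surface $(A,i)/K$ at the auxiliary primes in $\cS$, and then to apply Lemma~\ref{type23epsilon} for the dichotomy before translating back to the claimed global description of $\varphi^{12}$ on $\mfI_k(p)$.

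Since $p\notin\cN_1(k)\supseteq\{2,3\}\cup\Ram(k)$, one has $p\ge 5$ and $p$ unramified in $k$, so Lemma~\ref{epsilon}(2) produces $\varepsilon=\sum_\sigma a_\sigma\sigma$ with $a_\sigma\in\{0,8,12,16,24\}$ and $\varphi^{12}(\alpha\cO_k)\equiv\alpha^\varepsilon\pmod{\mfp}$ for every $\alpha\in k^\times$ prime to $p$. The main task is then to exhibit, for each $\mfq\in\cS$ of residue characteristic $q$, an element $\beta_\mfq\in\cFR(q)$ satisfying the \emph{exact} equality
\[
\alpha_\mfq^{\varepsilon}=\beta_\mfq^{24h_k}\quad\text{in }\cO_{k(\beta_\mfq)}.
\]
Via $\varphi=\lambdaab\circ\tr_{K/k}$ and the class-field-theoretic identification of the transfer with the inclusion $\kA^\times\hookrightarrow\KA^\times$, one has $\varphi^{12}(\alpha_\mfq\cO_k)=\lambda^{12}(\alpha_\mfq\cO_K)$. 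A case-by-case analysis of the splitting of $\mfq$ in $K/k$, together with Lemma~\ref{lambdaunr} (which renders $\lambda^{12}$ unramified at the primes of $K$ above $\mfq$) and the Weil bound on the Frobenius eigenvalues of $A$ on the $2$-dimensional representation $\rhob_{A,p}$, gives the congruence $\alpha_\mfq^{\varepsilon}\equiv\beta_\mfq^{24h_k}\pmod{\mfp}$ with $\beta_\mfq\in\cFR(q)$. If the two sides were distinct, their difference would have nonzero $\Norm_{k(\beta_\mfq)/\Q}$ lying in $\cM_2(k)$ and divisible by $p$, contradicting $p\notin\cN_0(k)$; hence equality holds.

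With the equality above in hand for any $\mfq\in\cS$, Lemma~\ref{type23epsilon} forces $\varepsilon$ to be of Type~2 or Type~3, and I then translate this into the asserted form of $\varphi^{12}$. In Type~2, $\alpha^\varepsilon=\Norm_{k/\Q}(\alpha)^{12}$ so $\varphi^{12}=\theta_p^{12}$ on principal ideals; the Frobenius computation at each $\mfq\in\cS$ forces $\beta_\mfq=\pm i\sqrt{q}$ and hence $\varphi^{12}(\mfq)\equiv q^{12}\equiv\theta_p^{12}(\mfq)$, and since $\cS$ generates the class group this upgrades to $\varphi^{12}=\theta_p^{12}$ on all of $\mfI_k(p)$. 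In Type~3 with imaginary quadratic $L\subseteq k$, one has $\alpha^\varepsilon=\Norm_{k/L}(\alpha)^{24}$, and the equality $\alpha_\mfq^\varepsilon=\beta_\mfq^{24h_k}$ combined with $\beta_\mfq\bar\beta_\mfq=q$ shows that $\beta_\mfq\cO_L$ is a prime of $L$ of norm $q$, so $\beta_\mfq\cO_L$ equals $\mfq_L:=\mfq\cap\cO_L$ up to conjugation and $\mfq_L$ is therefore principal. Since $\Norm_{k/L}(\mfq)=\mfq_L$ (as $q$ splits in $k$, hence in $L$, so all residue degrees are $1$) and $\cS$ generates $Cl_k$, the norm map $Cl_k\to Cl_L$ vanishes, which is equivalent via class field theory to $H_L\subseteq k$. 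The congruence $\varphi^{12}(\mfa)\equiv\delta^{24}\pmod{\mfp_L}$ with $\delta\cO_L=\Norm_{k/L}(\mfa)$ then follows by patching the principal-ideal identity together with the values $\varphi^{12}(\mfq)\equiv\beta_\mfq^{24}$ at the generators in $\cS$.

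The main obstacle is the Frobenius computation in the middle step: one must handle all splitting behaviors of $\mfq$ in $K/k$ (split, inert, ramified) and all reduction types of $A$ at the primes of $K$ above $\mfq$ uniformly, and verify in each case that the $12$th-power product of the relevant $\lambda$-values takes the specific form $\beta^{24h_k}$ with $\beta\in\cFR(q)$. The essential inputs are the Weil bound $|\beta_i|=\sqrt{q}$ on Frobenius eigenvalues, the unramified-after-$12$th-power statement of Lemma~\ref{lambdaunr}, and the constraints on Frobenius imposed by the quaternionic structure captured by the factorization $\M_2(\Z_p)\subseteq\End(T_pA)$.
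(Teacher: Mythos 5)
Your overall architecture matches the paper's: produce $\varepsilon$ from Lemma~\ref{epsilon}(2), evaluate $\varphi^{12}$ at the primes of $\cS$ via Frobenius eigenvalues, force the exact equality $\alpha_\mfq^{\varepsilon}=\beta_\mfq^{24h_k}$ using $p\notin\cN_0(k)$, invoke Lemma~\ref{type23epsilon}, and translate. But the step you yourself flag as ``the main obstacle'' --- handling all splitting behaviors of $\mfq$ in $K/k$ --- is a genuine gap, and as stated it would fail. If $\mfq$ splits in $K/k$ as $\mfq_1\mfq_2$, then $\varphi^{12}(\mfq^{h_k})=\lambda^{12h_k}(\mfq_1)\lambda^{12h_k}(\mfq_2)$ is a product involving two a priori unrelated Frobenius eigenvalues, and if $\mfq$ is inert the relevant eigenvalue is a $q^2$-Weil number (absolute value $q$, not $\sqrt q$); in neither case do you land in the shape $\beta_\mfq^{24h_k}$ with a single $\beta_\mfq\in\cFR(q)$, which is exactly the shape the finite set $\cM_2(k)$ (hence $\cN_0(k)$) and Lemma~\ref{type23epsilon} are built around. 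The paper sidesteps this entirely: by Lemma~\ref{phi4indep} the character $\varphi^{12}$ is independent of the choice of $K$, and by Remark~\ref{Kexists} one may \emph{choose} $K$ so that every $\mfq\in\cS$ is ramified in $K/k$ (and every prime above $p$ is inert, as needed for Lemma~\ref{epsilon}(2)). Then $\mfq\cO_K=\mfq_K^2$, the residue field of $\mfq_K$ is the prime field $\F_q$, and $\varphi^{12}(\mfq^{h_k})=\lambda^{24h_k}(\mfq_K)\equiv\beta_\mfq^{24h_k}$ with $\beta_\mfq\in\cFR(q)$ by Jordan's description of Frobenius on QM-abelian surfaces over $\F_q$. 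Without this reduction your middle step does not go through.

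Two smaller gaps: in Type~2 you assert $\beta_\mfq=\pm\sqrt{-q}$, but $\beta_\mfq^{24h_k}=q^{12h_k}$ only gives $\beta_\mfq=\zeta\sqrt{-q}$ for a root of unity $\zeta$, and eliminating $\zeta\ne\pm1$ requires a separate argument using $\beta_\mfq\in\cFR(q)$ together with the hypothesis $q\nmid 6h_k$ built into $\cM$ (the paper spends half a page on this case analysis). In Type~3 you only treat $\varepsilon=\sum_{\sigma\in\Gal(k/L)}24\sigma$; the other branch $\varepsilon=\sum_{\sigma\notin\Gal(k/L)}24\sigma$ yields $\alpha^{\varepsilon}=\Norm_{k/\Q}(\alpha)^{24}\Norm_{k/L}(\alpha)^{-24}$ and must be converted to the stated form by applying the nontrivial automorphism of $L/\Q$ (which also changes the prime $\mfp_L$ to its conjugate). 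You should also note that a single prime above $p$ must be fixed consistently (replacing each $\beta_\mfq$ by its complex conjugate as needed) so that all congruences in (b) hold modulo one $\mfp_L$.
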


\begin{proof}

By replacing $K$ if necessary, we may assume
that every prime $\mfq\in \cS$ is ramified in $K/k$
(see the condition (2) in Lemma \ref{fieldofdef}).

Suppose $p\not\in\cT(k)\cup\Ram(k)$.
Take any prime $\mfq\in \cS$.
Let $q$ be the residual characteristic of $\mfq$, and
let $\mfq_K$ be the unique prime of $K$ above $\mfq$.
Then $p\ne q$.
Since $q\geq 5$, the abelian surface $A\otimes_K K_{\mfq_K}$ over $K_{\mfq_K}$
(which corresponds to $x\otimes K_{\mfq_K}$)
has good reduction after
a totally ramified finite extension $M(\mfq)/K_{\mfq_K}$
%a cyclic extension of degree $1,2,3,4$ or $6$
(\cite[Proposition 3.4 (1), p.101]{J}).
Then 
$\lambda^{12}(\mfq_K)=\lambda^{12}(\Frob_{\mfq_K})
=\lambda^{12}|_{\G_{M(\mfq)}}(\Frob_{M(\mfq)})=(\lambda|_{\G_{M(\mfq)}}(\Frob_{M(\mfq)}))^{12}$,
where $\Frob_{\mfq_K}$ (\resp $\Frob_{M(\mfq)}$) is the arithmetic Frobenius
of $\G_K$ at $\mfq_K$
(\resp the arithmetic Frobenius of $\G_{M(\mfq)}$).
Here note that $\lambda|_{\G_{M(\mfq)}}(\Frob_{M(\mfq)})$ is well-defined
since $\lambda|_{\G_{M(\mfq)}}$ is unramified.
There is a Frobenius eigenvalue $\beta_{\mfq}$ on some QM-abelian surface by $\cO$
over the \textit{prime field} $\F_q$
such that $\lambda|_{\G_{M(\mfq)}}(\Frob_{M(\mfq)})\equiv\beta_{\mfq}$
modulo a prime of $\Q(\beta_{\mfq})$ above $p$,
since $q$ splits completely in $k$
and $\mfq_K/\mfq$ is ramified.
Then we know $\beta_{\mfq}\in\cFR(q)$ by \cite[p.97]{J}.
Choose one prime $\mfp$ of $k$ above $p$.
Take a prime $\mfp_1$ of $k(\beta_{\mfq}\,|\,\mfq\in\cS)$ above $\mfp$.
Let $\mfp_2$ be the prime of $\Q(\beta_{\mfq}\,|\,\mfq\in\cS)$ below $\mfp_1$.
Replacing each $\beta_{\mfq}$ by its complex conjugate if necessary,
we may assume
$\lambda|_{\G_{M(\mfq)}}(\Frob_{M(\mfq)})\equiv\beta_{\mfq}\bmod{\mfp_2}$
for any $\mfq\in\cS$.
Then $\lambda^{12}(\mfq_K)\equiv\beta_{\mfq}^{12}\bmod{\mfp_2}$.

Applying Lemma \ref{epsilon} (2) to $\mfp$, we find an element
$\varepsilon=\sum_{\sigma\in\Gal(k/\Q)}a_{\sigma}\sigma\in\Z[\Gal(k/\Q)]$
with $a_{\sigma}\in\{0,8,12,16,24\}$,
which satisfies
$\varphi^{12}(\gamma\cO_k)\equiv\gamma^{\varepsilon}\bmod{\mfp}$
for any $\gamma\in k^{\times}$ prime to $p$
together with the condition (ii) in Lemma \ref{epsilon} (2).
In particular, for any prime $\mfq\in\cS$, we have
$\alpha_{\mfq}^{\varepsilon}\equiv\varphi^{12}(\alpha_{\mfq}\cO_k)
=\varphi^{12}(\mfq^{h_k})
=\lambda^{24h_k}(\mfq_K)
\equiv\beta_{\mfq}^{24h_k}\bmod{\mfp_1}$.
The second equality holds since $\mfq_K/\mfq$ is ramified.
Then $\alpha_{\mfq}^{\varepsilon}-\beta_{\mfq}^{24h_k}$
belongs to the prime of $k(\beta_{\mfq})$ below $\mfp_1$.
Therefore $p$ divides the rational integer
$\Norm_{k(\beta_{\mfq})/\Q}(\alpha_{\mfq}^{\varepsilon}-\beta_{\mfq}^{24h_k})$
for any prime $\mfq\in\cS$.

%If $\alpha^{\varepsilon}\ne\beta^{24h_k}$, then
%$p|\Norm_{k(\beta)/\Q}(\alpha^{\varepsilon}-\beta^{24h_k})\ne 0$.
%There are only finitely many such $p$'s.
%So, there is a constant $C_0(k)$ such that if $p>C_0(k)$ then
%for any $\mfq\in S$, we have
%$\alpha^{\varepsilon}=\beta^{24h_k}$.
%Here we should have written $\alpha_{\mfq}$,
%$\beta_{\mfq}$ for each $\mfq\in S$; but for simplicity we omit the subscripts...

Suppose $p\not\in\cN_1(k)$.
%Choose one prime $\mfq_0\in\cS$ not dividing $p$.
Then, for any prime $\mfq\in \cS$, %not dividing $p$,
% we find an element $\varepsilon$
%as in Lemma \ref{epsilon} (2)
%and an element $\beta_{\mfq}\in \cFR(\N(\mfq))$ satisfying
we have $\Norm_{k(\beta_{\mfq})/\Q}(\alpha_{\mfq}^{\varepsilon}-\beta_{\mfq}^{24h_k})=0$,
and so $\alpha_{\mfq}^{\varepsilon}=\beta_{\mfq}^{24h_k}$.
%
%By the condition (i), we can choose one prime $\mfq_0\in\cS$ not dividing $p$.
Choose one prime $\mfq_0\in\cS$. Applying Lemma \ref{type23epsilon}
to $\mfq_0$, we know that $\varepsilon$ is of type 2 or 3.

%Especially for $\mfq=\mfq_0$, we have
%$\alpha_{\mfq_0}^{\varepsilon}=\beta_{\mfq_0}^{24h_k}$.
%
%Then we know that $\varepsilon$ is of type 2 or 3 by Lemma \ref{type23epsilon}.
%(where we have used only one $\mfq\in\cS$).

%In the rest of the proof, we write $\beta=\beta_{\mfq}$ for simplicity.

First assume that $\varepsilon$ is of type 2.
%Let $\mfq\in\cS$ be any prime not dividing $p$.
For any prime $\mfq\in \cS$, %not dividing $p$,
we have $\beta_{\mfq}^{24h_k}=\alpha_{\mfq}^{\varepsilon}=\Norm_{k/\Q}(\alpha_{\mfq})^{12}
\in\Set{t\in\Q|t>0}$.
We also have
$|\beta_{\mfq}^{24h_k}|=q^{12h_k}$, where $q$ is the residual characteristic
of $\mfq$.
Then $\beta_{\mfq}^{24h_k}=q^{12h_k}$,
and so $\beta_{\mfq}=\zeta\sqrt{-q}$ for some $24h_k$-th root $\zeta$ of unity.
We claim $\beta_{\mfq}=\pm\sqrt{-q}$.
In the following proof of this claim, we write $\beta=\beta_{\mfq}$ for simplicity.

(I) Case $\Q(\beta)\ne\Q(\sqrt{-q})$.
Since $\Q(\zeta,\sqrt{-q})=\Q(\beta,\sqrt{-q})$,
we have
$[\Q(\zeta,\sqrt{-q}):\Q]=4$.
Furthermore we consider the following three cases:
(I-i)
$\Q(\zeta)=\Q$,
(I-ii)
$[\Q(\zeta):\Q]=2$ and
(I-iii)
$\Q(\zeta)=\Q(\zeta,\sqrt{-q})$.

Case (I-i).
We have $\zeta=\pm 1$ and $\beta=\pm\sqrt{-q}$.
But this is impossible because $\Q(\beta)\ne\Q(\sqrt{-q})$.

Case (I-ii).
We have $\ord\zeta=3,4$ or $6$.
%To see this, if $\ord\zeta=n$, then $\Gal(\Q(\zeta)/\Q)\cong(\Z/n\Z)^{\times}$
%and $\sharp(\Z/n\Z)^{\times}=2$. Therefore $n=3,4$ or $6$.
If $\ord\zeta=4$, then $\beta^2=\zeta^2(-q)=q>0$ and $\beta\in\R$.
This is a contradiction.
If $\ord\zeta=3$, then $\beta^2=\zeta^2(-q)=-\zeta^{-1}q$.
In this case $\Q(\beta)\supseteq\Q(\zeta)$, which implies
$\Q(\beta)=\Q(\zeta)$.
Then $\Q(\zeta)\ni\beta=\zeta\sqrt{-q}$, and so
$\Q(\zeta)\ni\sqrt{-q}$.
Hence $\Q(\zeta,\sqrt{-q})=\Q(\zeta)$.
This contradicts $[\Q(\zeta,\sqrt{-q}):\Q]=4$.
If $\ord\zeta=6$, then $\Q(\zeta)=\Q(\zeta^2)$ and
$\beta^2=-\zeta^2 q$.
In this case $\Q(\beta)\supseteq\Q(\zeta^2)=\Q(\zeta)$.
This leads to a contradiction just as in the case where $\ord\zeta=3$.

Case (I-iii).
We have $\Q(\zeta)\supseteq\Q(\sqrt{-q})$.
But the prime number $q$ is unramified in $\Q(\zeta)$ since $q$
does not divide $6h_k$.
This is a contradiction.

(II) Case $\Q(\beta)=\Q(\sqrt{-q})$.
In this case $\zeta\sqrt{-q}=\beta\in\Q(\sqrt{-q})$.
Then $\zeta\in\Q(\sqrt{-q})$ and
$\Q(\zeta)\subseteq\Q(\sqrt{-q})$.
Furthermore we consider the following two cases:
(II-i) $\Q(\zeta)=\Q$ and (II-ii) $\Q(\zeta)=\Q(\sqrt{-q})$.

Case (II-i).
We have $\zeta=\pm 1$ and $\beta=\pm\sqrt{-q}$, as required.

Case (II-ii).
Since $[\Q(\zeta):\Q]=2$, we have $\ord\zeta=3,4$ or $6$.
This can occur only when $q=3$.
But $q\ne 3$ since $q$ does not divide $6h_k$.

%Since $q$ does not divide $6h_k$ and $\Q(\beta)$ is an imaginary
%quadratic field, we have $\beta=\pm\sqrt{-q}$.

Therefore we conclude $\beta=\pm\sqrt{-q}$.
Then
$\varphi^{12}(\Frob_{\mfq})=\varphi^{12}(\mfq)
=\lambda^{24}(\mfq_K)\equiv\beta_{\mfq}^{24}=q^{12}
=\N(\mfq)^{12}\equiv\theta_p(\Frob_{\mfq})^{12}\bmod{p}$,
where $\Frob_{\mfq}$ is the arithmetic Frobenius of $\G_k$ at $\mfq$.
Combining this with
$\varphi^{12}(\gamma\cO_k)\equiv\Norm_{k/\Q}(\gamma)^{12}\bmod{p}$
for any $\gamma\in k^{\times}$ prime to $p$,
we conclude $\varphi^{12}=\theta_p^{12}$.
%(owing to the condition ($\cS$-ii)).
%

%
Next assume that $\varepsilon$ is of type 3 (for $\mfq_0$).
In this case $k$ contains the imaginary quadratic field $L=\Q(\beta_{\mfq_0})$,
and
$\varepsilon=\sum_{\sigma\in\Gal(k/L)}24\sigma$
or $\sum_{\sigma\not\in\Gal(k/L)}24\sigma$.
Applying Lemma \ref{type23epsilon} to each prime $\mfq\in\cS$,
we also have $k\supseteq\Q(\beta_{\mfq})$ and
$\varepsilon=\sum_{\sigma\in\Gal(k/\Q(\beta_{\mfq}))}24\sigma$
or $\sum_{\sigma\not\in\Gal(k/\Q(\beta_{\mfq}))}24\sigma$.
%for any prime $\mfq\in \cS$. %not dividing $p$.
Then $\Q(\beta_{\mfq})=L=\Q(\beta_{\mfq_0})$,
which does not depend on $\mfq$.
In this case note that $\mfp_2$ is a prime of $L=\Q(\beta_{\mfq}\,|\,\mfq\in\cS)$.
%Replacing $(A,i,V)$ by $(A/V,i',A[p]/V)$ if necessary,
%we may assume $\varepsilon=24\Norm_{k/L}$.
%Here $i':\cO\hookrightarrow\End_K(A/V)$ is the natural map
%induced by $i$.

Case (A) : $\varepsilon=\sum_{\sigma\in\Gal(k/L)}24\sigma$.
For any prime $\mfq\in\cS$, we have
$\Norm_{k/L}(\mfq)^{24h_k}=\Norm_{k/L}(\alpha_{\mfq})^{24}\cO_L
=\alpha_{\mfq}^{\varepsilon}\cO_L=\beta_{\mfq}^{24h_k}\cO_L=(\beta_{\mfq}\cO_L)^{24h_k}$
as ideals of $\cO_L$.
Then $\Norm_{k/L}(\mfq)=\beta_{\mfq}\cO_L$, which is a principal ideal.
Hence $\Norm_{k/L}(\mfI_k)\subseteq \mfP_L$,
where $\mfP_L$ is the principal ideal group of $L$.
Therefore we get $k\supseteq H_L$.
For any prime $\mfq\in\cS$, %not dividing $p$,
we have
$\varphi^{12}(\mfq)=\lambda^{24}(\mfq_K)\equiv\beta_{\mfq}^{24}\bmod{\mfp_2}$.
%for a prime $\mfp_{1,\mfq}$ of $L$ lying over $p$.
%
%Replacing $\beta_{\mfq}$ by its complex conjugate
%if necessary, we may assume $\mfp_{1,\mfq}=\mfp_0$.
%
We have also seen $\Norm_{k/L}(\mfq)=\beta_{\mfq}\cO_L$.
%Therefore the last assertion in Type 3 holds
%(owing to the condition (ii)).
Since $k=k(\beta_{\mfq}\,|\,\mfq\in\cS)$, we have $\mfp=\mfp_1$, which divides $\mfp_2$.
For any $\gamma\in k^{\times}$ prime to $p$, we have
$\varphi^{12}(\gamma\cO_k)
\equiv\Norm_{k/L}(\gamma)^{24}\bmod{\mfp_2}$.
Then we get
$\varphi^{12}(\mfa)\equiv\delta^{24}\bmod{\mfp_2}$
for any fractional ideal $\mfa$ of $k$ prime to $p$,
where $\delta\in L$ is a \textit{certain} element such that
$\Norm_{k/L}(\mfa)=\delta\cO_L$.
But the power $\delta^{24}$ is uniquely determined by $\mfa$
since $\delta$ is unique up to multiplication by an element
of $\cO_L^{\times}$ and $\sharp\cO_L^{\times}=2,4$ or $6$.
Then we have done by taking $\mfp_L=\mfp_2$.

Case (B) : $\varepsilon=\sum_{\sigma\not\in\Gal(k/L)}24\sigma$.
We have
$\Norm_{k/\Q}(\mfq)^{24h_k}\Norm_{k/L}(\mfq)^{-24h_k}
=\Norm_{k/\Q}(\alpha_{\mfq})^{24}\Norm_{k/L}(\alpha_{\mfq})^{-24}\cO_L
=\alpha_{\mfq}^{\varepsilon}\cO_L=\beta_{\mfq}^{24h_k}\cO_L=(\beta_{\mfq}\cO_L)^{24h_k}$
as ideals of $\cO_L$.
Then
$\Norm_{k/\Q}(\mfq)\Norm_{k/L}(\mfq)^{-1}=\beta_{\mfq}\cO_L$.
Applying the non-trivial element $c$ of $\Gal(L/\Q)$
we get
$\Norm_{k/L}(\mfq)=\beta_{\mfq}^c\cO_L$,
which is a principal ideal.
We also have $\varphi^{12}(\mfq)=\lambda^{24}(\mfq_K)\equiv\beta_{\mfq}^{24}\bmod{\mfp_2}$.
%for a prime $\mfp_{1,\mfq}$ of $L$ lying over $p$.
Then
$\varphi^{12}(\mfq)\equiv (\beta_{\mfq}^c)^{24}\bmod{\mfp_2^c}$.
%Replacing $\beta_{\mfq}$ by its complex conjugate
%if necessary, we may assume $\mfp_{1,\mfq}=\mfp_0$.
%
For any $\gamma\in k^{\times}$ prime to $p$, we have
$\varphi^{12}(\gamma\cO_k)
\equiv\Norm_{k/\Q}(\gamma)^{24}\Norm_{k/L}(\gamma)^{-24}\bmod{\mfp_2}$,
and then, applying $c$, we obtain
$\varphi^{12}(\gamma\cO_k)\equiv\Norm_{k/L}(\gamma)^{24}\bmod{\mfp_2^c}$.
Then we get
$\varphi^{12}(\mfa)\equiv\delta^{24}\bmod{\mfp_2^c}$
for any fractional ideal $\mfa$ of $k$ prime to $p$,
where $\delta\in L$ is a certain element (and so any element) such that
$\Norm_{k/L}(\mfa)=\delta\cO_L$.
We have done by taking $\mfp_L=\mfp_2^c$.
%owing to the condition ($\cS$-ii), 
%Note that we can take $\cN_1(k)$ to be
%$\cN_0(k)\cup \cT(k)\cup \Ram(k)$.

\end{proof}

\begin{rmk}
\label{mimictype}
In Lemma \ref{type23epsilon} and Theorem \ref{type23phi},
we mimic the notations ``Type 2", ``Type 3" in
\cite[Lemma 2 and Theorem 1, p.333]{Mo}.
We have no need to deal with ``Type 1" since
every QM-abelian surface has potentially good reduction everywhere
(or a Shimura curve has no cusps).
\end{rmk}

\begin{rmk}
\label{rmktype3}
In Type 3 of Theorem \ref{type23phi}, the element $\delta$
always exists since $\Norm_{k/L}(\mfI_k)\subseteq \mfP_L$.

\end{rmk}

Let $\cN_2(k)$ be the set of prime numbers $l$
satisfying the following two conditions.

(i)
The number $-l$ is a discriminant of some quadratic field.

(ii)
For any prime number $q$ satisfying $3<q<\frac{l}{4}$, if $q$ splits completely in $k$
then $q$ does not split in $\Q(\sqrt{-l})$.

\begin{thm}[{\cite[Theorem A, p.160]{Ma}}]
\label{goldfeld}

If $k$ is a quadratic field, then
the set $\cN_2(k)$ is finite.
Further the upper bound of $\cN_2(k)$ is effectively estimated
except at most one prime number.

\end{thm}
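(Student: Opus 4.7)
The plan is to reformulate the defining property of $\cN_2(k)$ in terms of prime splitting in a biquadratic extension of $\Q$, and then apply effective lower bounds for class numbers of imaginary quadratic fields.

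First I would write $k=\Q(\sqrt{D})$ with discriminant $D$, and let $\chi_D$ (\resp $\chi_{-l}$) denote the quadratic Dirichlet character cutting out $k$ (\resp $\Q(\sqrt{-l})$). A prime $q$ unramified in the relevant field splits completely in $k$ iff $\chi_D(q)=1$ and splits in $\Q(\sqrt{-l})$ iff $\chi_{-l}(q)=1$. Thus the condition $l\in\cN_2(k)$ translates to: no prime $q$ with $3<q<l/4$ satisfies both $\chi_D(q)=1$ and $\chi_{-l}(q)=1$, i.e., no such $q$ splits completely in the biquadratic field $K_l:=k(\sqrt{-l})$. Its three quadratic subfields are $k$, $\Q(\sqrt{-l})$, and a third imaginary quadratic field $F_l$ (with discriminant dividing a power of $2\cdot|D|\cdot l$); splitting completely in $K_l$ is equivalent to splitting in each of the three.

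Next I would use effective lower bounds for class numbers to produce small split primes. By the Goldfeld--Gross--Zagier theorem, one has an effective lower bound $h(-\Delta)\gg_\varepsilon (\log\Delta)^{1-\varepsilon}$ for the class number of an imaginary quadratic field of discriminant $-\Delta$. Combined with the analytic class number formula and standard estimates on partial Euler products for $L(s,\chi_{-l})$ and $L(s,\chi_{-lD})$, this forces, for $l$ sufficiently large, the existence of a prime $q<l/4$ which splits in both $\Q(\sqrt{-l})$ and $F_l$, and hence splits completely in $K_l$ over $k$. A Chebotarev density argument applied to the fixed quadratic field $k/\Q$ then guarantees that among such $q$ one can be chosen which also splits in $k$, contradicting condition (ii). This gives the desired finiteness of $\cN_2(k)$.

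The caveat of an unestimated exceptional prime reflects the well-known ineffectivity surrounding Siegel zeros. If some $L(s,\chi_{-l})$ admits an exceptional real zero very close to $s=1$, the effective lower bound on $h(-l)$ degenerates for that particular $l$; by the Landau--Page theorem such exceptional discriminants can occur for at most one value of $l$, which yields the ``at most one prime number'' exception. The main obstacle is precisely the effective class number lower bound, which is the deep Goldfeld--Gross--Zagier ingredient; everything else is bookkeeping with quadratic characters and Chebotarev. Since this is exactly the content of \cite[Theorem A, p.160]{Ma}, my proposal amounts to following that argument specialized to the quadratic setting.
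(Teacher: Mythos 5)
First, note that the paper does not prove this statement at all: it is quoted verbatim from Goldfeld's appendix to Mazur's paper (\cite[Theorem A, p.160]{Ma}), so there is no internal argument to compare with, and what you have written is a reconstruction of that external proof. Your reformulation of condition (ii) as ``no prime $3<q<l/4$ splits completely in the biquadratic field $k(\sqrt{-l})$'' is correct (one small slip: when $k$ is imaginary --- the only case the paper ultimately uses --- the third quadratic subfield $F_l$ is \emph{real}, not imaginary; also the closing Chebotarev step is redundant, since a prime splitting in two of the three quadratic subfields automatically splits in the third).

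The genuine gap is in the quantitative engine. The Goldfeld--Gross--Zagier bound $h(-\Delta)\gg_{\varepsilon}(\log\Delta)^{1-\varepsilon}$ is exception-free and does not degenerate in the presence of a Siegel zero (that is precisely its point), so it cannot be the source of the ``at most one exception''; that caveat comes from Siegel/Tatuzawa/Landau--Page--type statements about real zeros of $L(s,\chi)$ near $s=1$, exactly as the paper's introduction indicates. It is also anachronistic for the 1978 source. More seriously, the step ``this forces, for $l$ sufficiently large, the existence of a prime $q<l/4$ which splits in both'' does not follow from any polylogarithmic class number bound. The hypothesis constrains only primes splitting completely in $k$, and a Mertens-type computation on $\sum_{3<q<l/4}(1+\chi_D(q))(1+\chi_{-l}(q))/q=O_k(1)$ shows it forces only $L(1,\chi_{-l})\,L(1,\chi_D\chi_{-l})\ll_k 1/\log l$, i.e.\ a product of class numbers of size $\ll_k l/\log l$. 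That is compatible with each class number having its typical order $\asymp\sqrt{l}$ (up to logarithms), which exceeds any $(\log l)^{1-\varepsilon}$ threshold by a power of $l$. So the implication you assert is off by roughly a factor of $\sqrt{l}$, and the part you dismiss as ``bookkeeping with quadratic characters'' is in fact where the entire content of Goldfeld's Theorem A lies: one must extract from the hypothesis a much stronger conclusion (an exceptional real zero at distance $O(1/\log l)$ from $s=1$, or an abnormally small $L$-value) before the one-exception machinery can be applied.
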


From now to the end of this section, assume that $k$ is Galois over $\Q$.

\begin{prop}
\label{type2}

Suppose $p\geq 11$, $p\ne 13$ and $p\not\in\cN_1(k)$.
If $\varphi$ is of type 2, then $p\in\cN_2(k)$.
%
%Let $k$ be a quadratic field, and let $p$ be a prime number
%satisfying $p\geq 11$ and $p\ne 13$.
%Assume that $p$ does not divide $d$ and that there is a point
%of $M_0^B(p)(k)$ whose corresponding character $\varphi$ is of type 2.
%Then $p$ belongs to $\cN_2(k)$.

\end{prop}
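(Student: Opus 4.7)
The strategy is first to verify condition~(i) directly from $p\equiv 3\pmod 4$, and then to translate the type~$2$ hypothesis $\varphi^{12}=\theta_p^{12}$ into a congruence on a Frobenius eigenvalue $\beta$ at $\mfq$; the $\G_K$-stability of $V$ together with the size bounds forces $a=0$, and then condition~(ii) follows by quadratic reciprocity.

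Since $\varphi$ is of type~$2$, $p\equiv 3\pmod 4$, so $-p$ is the discriminant of $\Q(\sqrt{-p})$; this verifies~(i). For~(ii), fix a prime $q$ with $3<q<p/4$ splitting completely in $k$, and a prime $\mfq$ of $k$ above $q$. By Lemma~\ref{phi4indep} the character $\varphi^{12}$ does not depend on $K$, so using Remark~\ref{Kexists} I replace $K$ by a quadratic extension of $k$ ramified at $\mfq$. By \cite[Proposition 3.4 (1), p.101]{J} the QM-abelian surface acquires good reduction over a totally ramified extension $M(\mfq)/K_{\mfq_K}$; since $q$ splits completely in $k$ and $K_{\mfq_K}/k_{\mfq}$ is ramified, the residue field is $\F_q$, and the Frobenius eigenvalue $\beta\in\cFR(q)$ satisfies $\beta^2+a\beta+q=0$ with $|a|<2\sqrt q$. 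Because $\mfq$ ramifies in $K/k$, class field theory gives $\varphi^{12}(\mfq)=\lambda^{24}(\mfq_K)\equiv\beta^{24}\pmod{\mfp_2}$ for a prime $\mfp_2$ of $\Q(\beta)$ above $p$. Combined with $\varphi^{12}=\theta_p^{12}$ this yields $\beta^{24}\equiv q^{12}\pmod{\mfp_2}$, and since $\beta\bar\beta=q$ is a $\mfp_2$-unit, $\beta^{12}\equiv\bar\beta^{12}\pmod{\mfp_2}$.

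Next I exploit the $\G_K$-stability of $V$: the scalar $\lambda(\Frob_{\mfq_K})\in\F_p^\times$ must be an eigenvalue of Frobenius on $A[p]$, whose characteristic polynomial reduces to $(x^2+\bar a x+\bar q)^2\in\F_p[x]$. Hence $x^2+\bar a x+\bar q$ splits in $\F_p$, both reductions $\wt{\beta},\wt{\bar\beta}$ lie in $\F_p^\times$, and $\wt{\eta}:=\wt{\beta}/\wt{\bar\beta}\in\F_p^\times$ is a $12$-th root of unity. Since $p\equiv 3\pmod 4$, $\gcd(12,p-1)\mid 6$, so the order of $\wt{\eta}$ lies in $\{1,2,3,6\}$. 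Order~$1$ would force $p\mid 4q-a^2$, impossible as $0<4q-a^2<p$. For orders~$3$ and~$6$ one computes $\wt{\eta}+\wt{\eta}^{-1}=-1$ and $1$ respectively, whence $\bar a^2\equiv\bar q\pmod p$ or $\bar a^2\equiv 3\bar q\pmod p$; the bounds $0\leq a^2<4q<p$ and $q<p/4$ place $a^2-cq$ strictly between $-p$ and $p$ for $c\in\{1,3\}$, and $cq$ is not a perfect square for prime $q>3$, so both cases fail. Only order~$2$ survives, giving $\bar a=0$ and hence $a=0$ since $|a|<2\sqrt q<\sqrt p$.

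Therefore $\beta=\pm\sqrt{-q}$, so $\wt{\beta}=\pm\sqrt{-\bar q}\in\F_p^\times$ shows that $-q$ is a square modulo $p$. Since $\left(\frac{-1}{p}\right)=-1$, this means $q$ is a non-residue modulo $p$; a short reciprocity computation (using $p\equiv 3\pmod 4$) converts this into $\left(\frac{-p}{q}\right)=-1$, so $q$ does not split in $\Q(\sqrt{-p})$. This verifies~(ii). The main obstacle is the potentially ``exotic'' case where $\wt{\eta}$ has order $12$ (corresponding to $p\mid a^4-4a^2q+q^2$, a factor of the Lucas value $V_{12}=(\beta^{12}-\bar\beta^{12})/(\beta-\bar\beta)$); this is ruled out precisely by the Galois-stability input, which forces $\wt{\eta}\in\F_p^\times$ and thereby restricts its order to divisors of $6$.
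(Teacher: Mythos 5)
Your proof is correct, and it reaches the same two targets as the paper ($p\equiv 3\bmod 4$ gives condition (i) of $\cN_2(k)$; showing every completely split prime $3<q<p/4$ is inert in $\Q(\sqrt{-p})$ gives condition (ii)), but the route through the key step is genuinely different. The paper proves its Lemma \ref{q/p-1} by contradiction: assuming $\left(\frac{q}{p}\right)=1$, it first establishes the auxiliary Lemma \ref{type2lambda} (that $\lambda^{12}\theta_p^{-6}$ is unramified everywhere and kills $C_{K/k}$ in $Cl_K$), introduces the twisted character $\psi=\lambda\theta_p^{-(p+1)/4}$ with $\psi(\Frob_M)^6=1$, and derives $(\beta+\betab)^2\equiv q$ or $4q \bmod p$, which the size bounds turn into an impossible equality. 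You instead argue directly: from $\varphi^{12}=\theta_p^{12}$ evaluated at $\mfq$ you get $\beta^{12}\equiv\betab^{12}$, observe that the character value $\lambda|_{\G_{M(\mfq)}}(\Frob_{M(\mfq)})$ forces both eigenvalue reductions into $\F_p^{\times}$ so that $\wt\eta=\wt\beta/\wt{\betab}$ has order dividing $\gcd(12,p-1)\mid 6$, and then eliminate orders $1,3,6$ by the same kind of size estimates, leaving $a=0$, $\beta=\pm\sqrt{-q}$, hence $\left(\frac{-q}{p}\right)=1$ and $\left(\frac{q}{p}\right)=-1$. This bypasses Lemma \ref{type2lambda} entirely and, as a bonus, re-derives $\beta=\pm\sqrt{-q}$ for an \emph{arbitrary} completely split $q<p/4$ (the paper only proves this for $\mfq\in\cS$, in Theorem \ref{type23phi}, and avoids needing it for the new $q$ via the $\psi$-trick). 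Two cosmetic points: write $\lambda|_{\G_{M(\mfq)}}(\Frob_{M(\mfq)})$ rather than $\lambda(\Frob_{\mfq_K})$, since $\lambda$ itself need not be unramified at $\mfq_K$ (only $\lambda^{12}$ is); and your restriction to $q>3$ is exactly what the definition of $\cN_2(k)$ requires, so no generality is lost, but it is worth saying so explicitly since the order-$6$ case $a^2=3q$ genuinely fails only for $q>3$.
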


%By replacing $K$ if necessary,
%we assume that each prime $\mfp$ of $k$ above $p$ is inert in $K$
%from now to the end of this section.
We give two lemmas to show Proposition \ref{type2}.

\begin{lem}
\label{type2lambda}

Suppose $p\geq 11$, $p\ne 13$ and $p\not\in\cN_1(k)$.
Further assume the following two conditions.

(a)
Every prime $\mfp$ of $k$ above $p$ is inert in $K/k$.

(b)
Every prime $\mfq\in\cS$ is ramified in $K/k$.

\noindent
If $\varphi$ is of type 2, then we have the following.

(i) The character $\lambda^{12}\theta_p^{-6}:\G_K\longrightarrow\F_p^{\times}$
is unramified everywhere.

(ii) The map $Cl_K\longrightarrow\F_p^{\times}$ induced from $\lambda^{12}\theta_p^{-6}$
is trivial on
$C_{K/k}:=\im(Cl_k\longrightarrow Cl_K)$,
where $Cl_K$ is the ideal class group of $K$
and
$Cl_k\longrightarrow Cl_K$
is the map defined by
$[\mfa]\longmapsto[\mfa\cO_K]$.
%induced from the inclusion $k\subseteq K$.

\end{lem}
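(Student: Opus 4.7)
For part~(i), I would first observe that by Lemma~\ref{lambdaunr} the character $\lambda^{12}$ is unramified at every prime of $K$ not dividing $p$, while $\theta_p$ is manifestly unramified away from $p$; so the whole content reduces to checking the claim on inertia at a prime $\cP \mid p$ of $K$. Hypothesis~(a) combined with $p \notin \Ram(k)$ (which holds because $p \notin \cN_1(k)$) implies that $\cP$ is unramified over $\Q$, so Lemma~\ref{epsilon}(1) applies and yields $\lambda^{12}|_{I_\cP} = \theta_p^b$ with $b \in \{0,4,6,8,12\}$. Reading off the local computation in the proof of Lemma~\ref{epsilon}(2) — in which the local norm $K_\cP \to k_\mfp$ on $\cO_{k_\mfp}^\times$ acts by squaring because $\cP/\mfp$ is unramified of degree $2$ — I would then extract the relation $\varphi^{12}|_{I_\mfp} = \theta_p^{2b}$, where $\mfp := \cP \cap k$.

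Next, type~2 provides $\varphi^{12} = \theta_p^{12}$, and $\theta_p|_{I_\mfp}$ has full order $p-1$ since $p$ is unramified in $k$. Thus $2b \equiv 12 \pmod{p-1}$, i.e.\ $b \equiv 6 \pmod{(p-1)/2}$. A finite check pins down $b = 6$ among $\{0,4,6,8,12\}$ whenever $(p-1)/2 > 6$, which covers all $p \geq 17$; the case $p = 11$ (where $(p-1)/2 = 5$) I would verify by inspection, while the excluded value $p = 13$ is precisely where $(p-1)/2 = 6$ admits two spurious solutions $b \in \{0,12\}$. With $b = 6$ in hand and $I_\cP = I_\mfp$ by~(a), part~(i) follows.

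For part~(ii), having (i) at my disposal, the character $\lambda^{12}\theta_p^{-6}$ descends to a character of $Cl_K$, so it suffices to evaluate it on $[\mfa\cO_K]$ for an integral ideal $\mfa$ of $k$ coprime to $p$. Using the definition $\varphi = \lambdaab \circ \tr_{K/k}$ together with the class-field-theoretic translation of $\tr_{K/k}$ as the natural inclusion $\kA^\times \hookrightarrow \KA^\times$, I would derive the ideal-theoretic identity $\varphi^{12}(\mfa) = \lambda^{12}(\mfa\cO_K)$ by a short split/inert/ramified case analysis (the ramified case needs that $\lambda^{12}$ is unramified). Combining this with $\varphi^{12} = \theta_p^{12}$ and the multiplicativity $\N_{K/\Q}(\mfa\cO_K) = \N_{k/\Q}(\mfa)^2$ gives
\begin{equation*}
(\lambda^{12}\theta_p^{-6})(\mfa\cO_K) = \theta_p^{12}(\mfa)\cdot\N_{K/\Q}(\mfa\cO_K)^{-6} = \N_{k/\Q}(\mfa)^{12}\cdot\N_{k/\Q}(\mfa)^{-12} = 1.
\end{equation*}

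The principal obstacle I anticipate is the numerical step isolating $b = 6$, since it is precisely this step that dictates the peculiar exclusion $p \neq 13$ in the hypotheses. Hypothesis~(b) does not appear to enter the argument for this lemma; it is a standing condition prepared for the global Frobenius argument in Theorem~\ref{type23phi}. Hypothesis~(a), on the other hand, is essential at two points: it guarantees that Lemma~\ref{epsilon}(1) applies at $\cP$, and it identifies $I_\cP$ with $I_\mfp$ inside $G_{k_\mfp}$ so that the two restrictions of $\theta_p^6$ coincide.
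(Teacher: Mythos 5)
Your proof is correct. Part (i) is essentially identical to the paper's argument: unramifiedness away from $p$ from Lemma \ref{lambdaunr}, then at $\cP\mid p$ the exponent $b$ from Lemma \ref{epsilon}(1), the local norm/transfer computation giving $\varphi^{12}|_{I_{\mfp}}=\theta_p^{2b}$, the congruence $2b\equiv 12\bmod{p-1}$, and the elimination of all $b\ne 6$ using $p\ge 11$, $p\ne 13$ (your diagnosis of why $p=13$ is excluded matches the paper exactly). Part (ii), however, takes a genuinely different route. The paper evaluates $\lambda^{12}\theta_p^{-6}$ only on the classes $[\mfq\cO_K]$ for $\mfq\in\cS$ (which generate $C_{K/k}$), and computes $\lambda^{12}(\mfq_K)\equiv\beta_{\mfq}^{12}$ with $\beta_{\mfq}=\pm\sqrt{-q}$ by importing the Frobenius-eigenvalue and type-2-versus-type-3 analysis from the proof of Theorem \ref{type23phi}; this is precisely where hypothesis (b) enters, since it forces $\mfq_K/\mfq$ to be ramified so that the reduction of $A$ lives over the prime field $\F_q$. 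You instead prove the clean idele-theoretic identity $\varphi^{12}(\mfa)=\lambda^{12}(\mfa\cO_K)$ for all $\mfa$ prime to $p$ (the split/inert/ramified check is correct, and the paper itself implicitly uses the ramified case of this identity in Theorem \ref{type23phi}), and then type 2 together with $\Norm_{K/\Q}(\mfa\cO_K)=\Norm_{k/\Q}(\mfa)^2$ finishes it. Your route is shorter, applies to every class of $C_{K/k}$ at once rather than just to generators, and — as you correctly observe — makes hypothesis (b) superfluous for this lemma (it is still needed later, e.g.\ in Lemma \ref{q/p-1}); what it forgoes is the extra information $\beta_{\mfq}=\pm\sqrt{-q}$ that the paper's computation produces as a by-product.
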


\begin{proof}

(i)
We know that $\lambda^{12}$ is unramified outside $p$ by Lemma
\ref{lambdaunr}.
Then $\lambda^{12}\theta_p^{-6}$ is also unramified outside $p$.
Take a prime $\cP$ of $K$ above $p$.
We show that $\lambda^{12}\theta_p^{-6}|_{I_{\cP}}$ is trivial.
Put $\mfp:=\cP\cap\cO_k$.
By Lemma \ref{epsilon} (1), we have
$\lambda^{12}|_{I_{\cP}}=\theta_p^a$
for an element $a\in\{0,4,6,8,12\}$.
Since $\cP/\mfp$ is inert, we have the following commutative
diagram:
\begin{equation*}
\begin{CD}
\Gab_{k_{\mfp}}@>\text{$\tr$}>>\Gab_{K_{\cP}}
@>\text{$(\lambda^{12}|_{\G_{K_{\cP}}})^{\text{ab}}$}>>\F_p^{\times} \\
@AAA @AAA @AA\text{$id$}A \\
\cO_{k_{\mfp}}^{\times}@>>\text{$\subseteq$}>\cO_{K_{\cP}}^{\times}
@>>\text{$\Norm_{K_{\cP}/\Q_p}^{-a}\bmod{p}$}>\F_p^{\times},
\end{CD}
\end{equation*}
where the left and middle vertical maps are the reciprocity maps
of local class field theory,
$\tr$ is the natural map induced from the transfer map
$\G_{k_{\mfp}}\longrightarrow\Gab_{K_{\cP}}$
and $(\lambda^{12}|_{\G_{K_{\cP}}})^{\text{ab}}$
is the natural map induced from
$\lambda^{12}|_{\G_{K_{\cP}}}$.
For any $x\in\cO_{k_{\mfp}}^{\times}$, we have
$\varphi^{12}(x)\equiv\Norm_{K_{\cP}/\Q_p}(x)^{-a}
=(\Norm_{k_{\mfp}/\Q_p}\circ\Norm_{K_{\cP}/k_{\mfp}}(x))^{-a}
=\Norm_{k_{\mfp}/\Q_p}(x)^{-2a}
\equiv\theta_p(x)^{2a}\bmod{p}$.
Since $\varphi^{12}=\theta_p^{12}$, we have
$\theta_p^{12-2a}(x)\equiv 1\bmod{p}$
for any $x\in\cO_{k_{\mfp}}^{\times}$.
As $p$ is unramified in $k$, we know
$\theta_p(\cO_{k_{\mfp}}^{\times})=\F_p^{\times}$.
Then $2a\equiv 12\bmod{p-1}$.
Since $p\geq 11$ and $p\ne 13$, we have $a=6$.
Hence $\lambda^{12}|_{I_{\cP}}=\theta_p^6$,
and so $\lambda^{12}\theta_p^{-6}|_{I_{\cP}}$ is trivial.

(ii)
Take any prime $\mfq\in\cS$.
Let $q$ be the residual characteristic of $\mfq$, and let
$\mfq_K$ be the unique prime of $K$ above $\mfq$.
In the proof of Theorem \ref{type23phi}, we have seen that
the element $\varepsilon$ in Lemma \ref{epsilon} (2)
is of type 2 or 3.
But, because of the hypothesis $p\geq 11$, $p\ne 13$
and the condition (iii) in Lemma \ref{epsilon} (2),
the element $\varepsilon$ cannot be of type 3.
We also have seen (in the proof of Theorem \ref{type23phi})
$\lambda^{12}(\mfq_K)\equiv\beta^{12}\bmod{p}$
where
$\beta=\pm\sqrt{-q}$
in the case of type 2.
Then
$\lambda^{12}(\mfq\cO_K)
=\lambda^{12}(\mfq_K^2)
=\lambda^{24}(\mfq_K)
\equiv \beta^{24}
=q^{12}
\equiv\theta_p^{12}(\mfq_K)
=\theta_p^6(\mfq\cO_K)\bmod{p}$,
as required.

\end{proof}

\begin{lem}
\label{q/p-1}

Suppose $p\geq 11$, $p\ne 13$ and $p\not\in\cN_1(k)$.
Assume that $\varphi$ is of type 2.
Let $q<\frac{p}{4}$ be a prime number
which splits completely in $k$.
Then we have
$\left(\frac{q}{p}\right)=-1$.

\end{lem}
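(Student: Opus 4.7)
My strategy is to evaluate the character $\chi:=\lambda^{12}\theta_p^{-6}$ from Lemma \ref{type2lambda} at a Frobenius above $q$, and to combine the Hasse--Weil bound on the trace of Frobenius with the smallness hypothesis $q<p/4$ to pin down that trace. First I would fix a prime $\mfq$ of $k$ above $q$ and, by Remark \ref{Kexists} and Lemma \ref{fieldofdef}, replace $K$ by a quadratic extension of $k$ satisfying the two hypotheses (a), (b) of Lemma \ref{type2lambda} together with the extra condition that $\mfq$ be ramified in $K/k$. Writing $\mfq_K$ for the unique prime of $K$ above $\mfq$, so that $\mfq_K^{2}=\mfq\cO_K$ and $[\mfq\cO_K]\in C_{K/k}$, Lemma \ref{type2lambda}(ii) gives $\chi(\mfq_K)^{2}=\chi(\mfq\cO_K)=1$, whence $\chi(\mfq_K)\in\{\pm 1\}$.

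Next, after a totally ramified finite extension of $K_{\mfq_K}$ over which $A$ acquires good reduction (possible since QM abelian surfaces have potentially good reduction and the inertial action on $T_{p}A$ factors through the tame quotient), the character $\lambda$ is unramified at $\mfq_K$ and $x:=\lambda(\Frob_{\mfq_K})\in\F_p^{\times}$ is well defined. By the Weil bound on the reduction, $x$ is a root modulo $p$ of $X^{2}-aX+q$ for some integer $a$ with $|a|\le 2\sqrt{q}$. Setting $z:=x^{2}/q$, the identity $z^{6}=x^{12}/q^{6}=\chi(\mfq_K)\in\{\pm 1\}$ forces $z^{12}=1$, and the relation $x^{2}=ax-q$ rewrites, for $z\ne -1$, as $a^{2}\equiv q(z+1)^{2}/z\pmod p$.

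Since $p\equiv 3\pmod 4$ (Type 2), $\F_p^{\times}$ contains no primitive $4$th root of unity, so every twelfth root of unity in $\F_p^{\times}$ is a sixth root; i.e.\ $z\in\{\pm 1,\pm\omega,\pm\omega^{2}\}$ where $\omega$ is a primitive cube root of unity in $\F_p$ (the four non-$\pm 1$ cases occurring only when $p\equiv 1\pmod 3$). A direct calculation using $\omega^{2}+\omega+1=0$ yields $q(z+1)^{2}/z\equiv cq\pmod p$ with $c=4$ for $z=1$, $c=1$ for $z\in\{\omega,\omega^{2}\}$, and $c=3$ for $z\in\{-\omega,-\omega^{2}\}$. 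Thus $a^{2}\equiv cq\pmod p$ with $c\in\{1,3,4\}$. The hypothesis $q<p/4$ combined with $|a|^{2}\le 4q$ places both $a^{2}$ and $cq$ in $[0,p)$, so the congruence becomes an equation of non-negative integers $a^{2}=cq$. For $c\in\{1,4\}$ this has no solution ($q$ being a non-square prime), and for $c=3$ it forces $q=3$ and $a=\pm 3$.

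To finish I would invoke a residue-field argument. If $z=-1$ then $a=0$, so $x^{2}\equiv -q\pmod p$ with $x\in\F_p^{\times}$, hence $-q$ is a quadratic residue modulo $p$. In the exceptional case $(q,a)=(3,\pm 3)$, the Frobenius eigenvalue $\beta=(a\pm\sqrt{a^{2}-4q})/2$ lies in $\Q(\sqrt{-3})$, so $x\in\F_p^{\times}$ being the reduction of $\beta$ at a prime of $\Q(\beta)$ above $p$ forces $p$ to split in $\Q(\sqrt{-3})$, giving $\bigl(\tfrac{-3}{p}\bigr)=1$; thus $-q$ is again a quadratic residue modulo $p$. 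In either case, combining with $\bigl(\tfrac{-1}{p}\bigr)=-1$ from $p\equiv 3\bmod 4$, one obtains $\bigl(\tfrac{q}{p}\bigr)=-1$. The main obstacle is the bookkeeping of the third paragraph: one must classify twelfth roots of unity in $\F_p^{\times}$ (case-splitting on $p\bmod 3$), verify the explicit formula $a^{2}\equiv cq\pmod p$, and recognize that $q<p/4$ is tight enough to promote the congruence to an integer equation $a^{2}=cq$, which then admits only the trivial solution $a=0$ or the exceptional configuration $q=3$ that still delivers the desired conclusion.
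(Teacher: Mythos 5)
Your proof is correct and rests on the same pillars as the paper's own argument: Lemma \ref{type2lambda} (ii) evaluated at $[\mfq\cO_K]=[\mfq_K^{2}]$, the Weil bound for the Frobenius eigenvalue of the good reduction over the prime field $\F_q$, and the inequality $q<\frac{p}{4}$ used to upgrade a congruence modulo $p$ to an identity of integers. The only difference is organizational: the paper assumes $\left(\frac{q}{p}\right)=1$ and reaches the contradiction $(\beta+\betab)^2\in\{q,4q\}$, whereas your direct case analysis on the sixth root of unity $z=x^{2}/q$ leaves the two surviving possibilities $z=-1$ and $q=3$ (which correspond exactly to $q^{(p-1)/2}\equiv-1$), and you correctly extract $\left(\frac{q}{p}\right)=-1$ from them by the quadratic-residue and discriminant arguments in your last paragraph.
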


\begin{proof}

Assume otherwise i.e. $\left(\frac{q}{p}\right)=1$.
Since $\left(\frac{q}{p}\right)\equiv q^{\frac{p-1}{2}}\bmod{p}$,
we have $q^{\frac{p-1}{2}}\equiv 1\bmod{p}$.

Take a prime $\mfq$ of $k$ above $q$.
By replacing $K$ if necessary,
we may assume the conditions (a), (b) in Lemma \ref{type2lambda}
and that $\mfq$ is ramified in $K/k$.
Let $\mfq_K$ be the unique prime of $K$ above $\mfq$.
The abelian variety $A\otimes_K K_{\mfq_K}$ has good reduction
after a totally ramified finite extension $M/K_{\mfq_K}$.
Then $\lambda(\Frob_M)\equiv\beta$
modulo a prime $\mfp_0$ of $\Q(\beta)$ above $p$
for a Frobenius eigenvalue
$\beta$ of some QM-abelian surface over the prime field $\F_q$,
where $\Frob_M$ is the arithmetic Frobenius of $\G_M$
($\subseteq\G_{K_{\mfq_K}}$).
We also have $\lambda^{-1}\theta_p(\Frob_M)\equiv\betab\bmod{\mfp_0}$,
where $\betab$ is the complex conjugate of $\beta$.
Put $\psi:=\lambda\theta_p^{-\frac{p+1}{4}}$.
Then
$\psi^{12}=\lambda^{12}\theta_p^{-3(p+1)}
=\lambda^{12}\theta_p^{-6}$.
By Lemma \ref{type2lambda} (ii),
we have
$1=\lambda^{12}(\mfq\cO_K)\theta_p^{-6}(\mfq\cO_K)
=\psi^{12}(\mfq\cO_K)=\psi^{24}(\mfq_K)
=\psi^{24}(\Frob_M)=\psi(\Frob_M)^{24}$.
Here, note that $\psi(\Frob_M)$ is well-defined and
that the fourth equality holds because the extension
$M/K_{\mfq_K}$ is totally ramified.
Since $\F_p^{\times}$ is a cyclic group of order $p-1$ and
$p-1\equiv 2\bmod{4}$, we have $\psi(\Frob_M)^6=1$.
Then
$\psi(\Frob_M)^2+\psi(\Frob_M)^{-2}\equiv-1$ or $2\bmod{p}$.
We also have
$\beta^2+\betab^2\equiv
\psi(\Frob_M)^2\theta_p(\Frob_M)^{\frac{p+1}{2}}
+\psi(\Frob_M)^{-2}\theta_p(\Frob_M)^{\frac{-p+3}{2}}
=\theta_p(\Frob_M)^{\frac{p+1}{2}}
(\psi(\Frob_M)^2+\psi(\Frob_M)^{-2})
=q^{\frac{p+1}{2}}(\psi(\Frob_M)^2+\psi(\Frob_M)^{-2})\bmod{p}$.
Since $q^{\frac{p-1}{2}}\equiv 1\bmod{p}$, we have
$q^{\frac{p+1}{2}}\equiv q\bmod{p}$.
Then $\beta^2+\betab^2\equiv -q$ or $2q\bmod{p}$,
and so $(\beta+\betab)^2\equiv q$ or $4q\bmod{p}$.

Since $\beta\in\cFR(q)$, we have $|\beta+\betab|<2\sqrt{q}$.
Since $q<\frac{p}{4}$, we have
$|(\beta+\betab)^2-q|<3q<p$ and
$|(\beta+\betab)^2-4q|\leq 4q<p$.
Then $(\beta+\betab)^2\equiv q$ or $4q\bmod{p}$ implies
$(\beta+\betab)^2=q$ or $4q$.
Since $\beta+\betab\in\Z$, we have a contradiction.
Therefore we conclude
$\left(\frac{q}{p}\right)=-1$.

\end{proof}

\noindent
(Proof of Proposition \ref{type2})

%Let $p$ be an odd prime number which is unramified in $k$.
Assume that $\varphi$ is of type 2.
In this case $p\equiv 3\bmod{4}$, and so
$-p$ is the discriminant of $\Q(\sqrt{-p})$.
For any $q<\frac{p}{4}$ that splits completely in $k$, we have seen
$\left(\frac{q}{p}\right)=-1$ in Lemma \ref{q/p-1}.
If $q\ne 2$, this implies
$\left(\frac{-p}{q}\right)=\left(\frac{-1}{q}\right)\left(\frac{p}{q}\right)
=\left(\frac{-1}{q}\right)\left(\frac{q}{p}\right)(-1)^{\frac{(p-1)(q-1)}{4}}
=\left(\frac{q}{p}\right)\left(\frac{-1}{q}\right)(-1)^{\frac{q-1}{2}}
=\left(\frac{q}{p}\right)=-1$,
and so $q$ is inert in $\Q(\sqrt{-p})$.
Then $p$ belongs to the set $\cN_2(k)$.

%Then by \cite[Theorem A, p.160]{Ma}, such $p$ belongs to a finite set
%$\cN_2(k)$ of prime numbers depending only on $k$.

\qed

%\begin{thm}
%\label{mainthm}

%Let $[k:\Q]=2$ and $k$ is not an imaginary quadratic field of
%class number one.
%Then we have $M_0(p)(k)=\{\text{CM points}\}$ for large enough $p$.

%\end{thm}

%\begin{rmk}
%\label{emptyset}

%A CM point of $M_0^B(p)(k)$ for $p\nmid d$ with $p\not\in\cN(k)$ corresponds to a
%triple $(A,i,V)$ (over $\kb$) with
%$\Aut_{\cO}(A,V)\cong\Z/4\Z$ or $\Z/6\Z$.
%Such a point does not exist when $\nu_2(p)=\nu_3(p)=0$.
%\end{rmk}

%\vspace{10mm}

\section{Classification of characters (II)}
\label{charII}

Let $k$ be a number field, and
let $(A,i)$ be a QM-abelian surface by $\cO$ over $k$.
For a prime number $p$ not dividing $d$,
assume that the representation $\rhob_{A,p}$ in (\ref{rhobar}) is reducible.
Then there is a 1-dimensional sub-representation of $\rhob_{A,p}$;
let $\nu$ be its associated character.
In this case notice that there is a left $\cO$-submodule $V$
of $A[p](\kb)$ with $\F_p$-dimension $2$ on which $\G_k$ acts by $\nu$,
and so the triple $(A,i,V)$ determines a point of $M_0^B(p)(k)$.
%
%Suppose $B\otimes_{\Q}k\cong\M_2(k)$.
%Further assume $\Aut(x)\ne\{\pm 1\}$ or
%$\Aut(x')\not\cong\Z/4\Z$.
%
%Then $x$ is represented by a triple $(A,i,V)$ defined over $k$
%by Proposition \ref{fieldM0Bp} (1).
%Let $\nu:\G_k\longrightarrow\F_p^{\times}$
%be the character associated to $V$ as in (\ref{lambda}).
%
We can classify $\nu$
by the same method as in the last section.
%where
%$B\otimes_{\Q}k\not\cong\M_2(k)$ and $\Aut_{\cO}(A_0,V_0)=\{\pm 1\}$.
%But the proof is simpler, because we need only one character $\nu$.
%
Below we give the statements of lemmas, a theorem and a proposition,
but omit the proofs since they are almost the same as (or simpler than) in the last section.

\begin{lem}
\label{nuunr}

The character $\nu^{12}$ is unramified at every prime of
$k$ not dividing $p$.

\end{lem}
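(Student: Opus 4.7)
The plan is to mimic the proof of Lemma \ref{lambdaunr} almost verbatim, with the simplification that $\nu$ is already a character of $\G_k$ (rather than of $\G_K$ for a quadratic extension). The key input, as before, is potential good reduction of QM-abelian surfaces with a controlled inertia action on torsion.

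First, I would fix a prime $\mfl$ of $k$ with residual characteristic $l \neq p$ and replace $k$ by the completion $k_{\mfl}$ and $A$ by its base change, so the question becomes whether $\nu^{12}|_{I_{\mfl}} = 1$. Invoking \cite[Proposition 3.4 (1), p.101]{J} in the same dichotomy used in Lemma \ref{lambdaunr}, there exists a finite Galois extension $M/k_{\mfl}^{unr}$ such that $A \otimes_{k} M$ has good reduction, and such that every element of $\Gal(M/k_{\mfl}^{unr}) \cong I_{\mfl}/\G_M$ has order dividing $12$ (cyclic of degree in $\{1,2,3,4,6\}$ when $d \neq 6$ or $l \geq 5$, and an order in $\{1,2,3,4,6\}$ for each individual element when $d = 6$ and $l \leq 3$).

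Since $\mfl \nmid p$ and $A$ has good reduction over $M$, the criterion of N\'eron--Ogg--Shafarevich gives that $\G_M$ acts trivially on $T_pA$, hence on $A[p]$, and in particular on any $\G_M$-stable subquotient. Because $\nu$ is the character cut out by a $\G_k$-stable one-dimensional $\F_p$-subrepresentation of $\rhob_{A,p}$, this yields $\nu|_{\G_M} = 1$. For any $\sigma \in I_{\mfl}$, the element $\sigma^{12}$ lies in $\G_M$ (since the image of $\sigma$ in $I_{\mfl}/\G_M$ has order dividing $12$), hence
\[
\nu^{12}(\sigma) \;=\; \nu(\sigma^{12}) \;=\; 1.
\]
Thus $\nu^{12}|_{I_{\mfl}}$ is trivial, i.e.\ $\nu^{12}$ is unramified at $\mfl$.

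There is no real obstacle here beyond bookkeeping: the proof transposes the one for Lemma \ref{lambdaunr} directly, with the mild simplification that no quadratic extension $K/k$ needs to be introduced, because the reducibility hypothesis on $\rhob_{A,p}$ already provides a $\G_k$-stable submodule over $k$ itself. The only point worth double-checking is that the case $d = 6$, $l \in \{2,3\}$ is handled by the fact that $12$ is a common multiple of all orders $1,2,3,4,6$ appearing in $\Gal(M/k_{\mfl}^{unr})$, which is exactly the arithmetic fact that made the exponent $12$ appear in Lemma \ref{lambdaunr}.
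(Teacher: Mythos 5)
Your proof is correct and follows exactly the route the paper intends: the paper omits the proof of this lemma, stating that it is ``almost the same as (or simpler than)'' the proof of Lemma \ref{lambdaunr}, and your argument is precisely that proof transposed from $\G_K$ to $\G_k$, with the correct observation that no auxiliary quadratic extension is needed here.
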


\begin{lem}
\label{epsilon'}

(1)
Let $\mfp$ be a prime of $k$ lying over $p$.
Assume $p\geq 5$ and that $\mfp$ is unramified in $k/\Q$.
Then $\nu^{12}|_{I_{\mfp}}=\theta_p^b$
for an element $b\in\{0,4,6,8,12\}$.
Further we can take $b\in\{0,6,12\}$ (\resp $b\in\{0,4,8,12\}$)
if $p\not\equiv 2\bmod{3}$ (\resp $p\not\equiv 3\bmod{4}$).

(2)
Assume that $k$ is Galois over $\Q$.
Suppose $p\geq 5$ and that $p$ is unramified in $k$.
Take a prime $\mfp$ of $k$ lying over $p$.
Then there exists an element
$$\varepsilon'=\sum_{\sigma\in\Gal(k/\Q)}a'_{\sigma}\sigma\in\Z[\Gal(k/\Q)]$$
with $a'_{\sigma}\in\{0,4,6,8,12\}$
satisfying the following three conditions.

(i)
$\nu^{12}(\alpha\cO_k)\equiv\alpha^{\varepsilon'}\bmod{\mfp}$
for each element $\alpha\in k^{\times}$ prime to $p$.

%(ii)
%For $\sigma,\sigma'\in\Gal(k/\Q)$,
%we have $a'_{\sigma}=a'_{\sigma'}$
%if $\mfp^{\sigma}=\mfp^{\sigma'}$.

(ii)
If $p\not\equiv 2\bmod{3}$ (\resp $p\not\equiv 3\bmod{4}$),
then
$a'_{\sigma}\in\{0,6,12\}$ (\resp $a'_{\sigma}\in\{0,4,8,12\}$)
for any $\sigma\in\Gal(k/\Q)$.

(iii)
$\nu^{12}|_{I_{\mfp^{\sigma^{-1}}}}=\theta_p^{a'_{\sigma}}$
for any $\sigma\in\Gal(k/\Q)$.

\end{lem}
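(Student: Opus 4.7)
The proof follows the same pattern as Lemma \ref{epsilon}, and is in fact simpler because $\nu$ is already a character of $\G_k$ itself, so neither the transfer map nor the accompanying factor of $2$ coming from $\Norm_{K_{\cP}/k_{\mfp'}}$ in Lemma \ref{epsilon} enters the picture.

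For (1), I would begin from the fact that the QM-abelian surface $A\otimes_k k_{\mfp}$ acquires good reduction over a totally ramified extension $M/k_{\mfp}$ of degree $e_M\in\{1,2,3,4,6\}$ by \cite[Proposition 3.4 (1), p.101]{J}; we may take $e_M=4$ or $6$. Since $\mfp$ is unramified in $k/\Q$, Serre's theory (\cite[Proposition 5, p.266 and Proposition 8, p.269]{Se}) gives $\nu|_{I_{\mfp}}=\theta_p^c$ for some integer $c$, and hence $\nu|_{I_M}=\theta_M^{ce_M}$ where $\theta_M$ is the fundamental character of level $1$ of $M$ in the sense of \cite[p.267]{Se}. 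Raynaud's bound \cite[Corollaire 3.4.4, p.270]{Ray} forces $0\leq\langle ce_M\rangle_{p-1}\leq e_M$ (by the same elementary argument given in the proof of Lemma \ref{epsilon} (1)). A case-by-case analysis on $e_M\in\{4,6\}$ and on the possible values $\langle ce_M\rangle_{p-1}\in\{0,2,4,6\}$, literally the same as in Lemma \ref{epsilon} (1), yields $\nu^{12}|_{I_{\mfp}}=\theta_p^b$ with $b\in\{0,4,6,8,12\}$; the further restrictions on $b$ when $p\not\equiv 2\bmod 3$ or $p\not\equiv 3\bmod 4$ come from the divisibility $(p-1)\mid(ce_M - \langle ce_M\rangle_{p-1})$ in each subcase.

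For (2), I would apply (1) to every prime $\mfp'$ of $k$ above $p$, obtaining exponents $b_{\mfp'}\in\{0,4,6,8,12\}$ with $\nu^{12}|_{I_{\mfp'}}=\theta_p^{b_{\mfp'}}$. Via local class field theory, this restriction corresponds to the map $\cO_{k_{\mfp'}}^{\times}\longrightarrow\F_p^{\times}$, $x\longmapsto\Norm_{k_{\mfp'}/\Q_p}(x)^{-b_{\mfp'}}\bmod p$. For $\alpha\in k^{\times}$ prime to $p$, I would write the ideal $\alpha\cO_k$ idelically as $(\alpha^{-1},\ldots,\alpha^{-1},1,1,\ldots)\in\kA^{\times}$ (with $\alpha^{-1}$ in the components above $p$ and $1$ elsewhere) and factor $\nu^{12}$ through the local components. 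Reorganizing the primes above $p$ as a $\Gal(k/\Q)$-orbit of a fixed $\mfp$ via the decomposition group $D_{\mfp}$ — exactly as in the displayed computation in the proof of Lemma \ref{epsilon} (2), but with no intermediate field $K$ and hence without the factor of $2$ — produces $\nu^{12}(\alpha\cO_k)\equiv\alpha^{\varepsilon'}\bmod\mfp$ with $\varepsilon'=\sum_{\sigma\in\Gal(k/\Q)}a'_{\sigma}\sigma$ and $a'_{\sigma}=b_{\mfp^{\sigma^{-1}}}\in\{0,4,6,8,12\}$. Assertion (iii) is then just a rephrasing of the local statement at $\mfp^{\sigma^{-1}}$, and the congruence restrictions in (ii) on each $a'_{\sigma}$ are inherited directly from the corresponding restrictions on $b_{\mfp^{\sigma^{-1}}}$ supplied by (1).

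The only real obstacle is the careful bookkeeping between the ideal-theoretic, idelic, and local-inertia descriptions of $\nu^{12}$; but this is precisely the computation already carried out in Lemma \ref{epsilon} (2), and the absence of the quadratic extension $K/k$ in the present setting makes it strictly simpler, so no new conceptual input is needed.
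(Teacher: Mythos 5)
Your proof is correct and is precisely what the paper intends: the authors omit the proof of Lemma \ref{epsilon'} on the grounds that it is ``almost the same as (or simpler than)'' that of Lemma \ref{epsilon}, and your adaptation---dropping the quadratic extension $K$, the transfer map, and the resulting factor of $2$ in the exponents, while keeping the Jordan/Serre/Raynaud argument for (1) and the idelic computation over the $\Gal(k/\Q)$-orbit of $\mfp$ for (2)---is exactly that simplification.
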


\begin{lem}
\label{type23epsilon'}

Under the situation in Lemma \ref{epsilon'} (2), take a prime number
q different from $p$. Assume that $q$ splits completely in $k$,
and take a prime $\mfq$ of $k$ lying over $q$.
Let $\alpha\in\cO_k\setminus\{0\}$ be an element such that
$\mfq^{h_k}=\alpha\cO_k$.
If $\alpha^{\varepsilon'}=\beta^{12h_k}$
for an  element $\beta\in\cFR(q)$,
%a Frobenius eigenvalue $\beta$ on some QM-abelian surface
%defined over $\F_q$, 
then $\varepsilon'$ is of one of the following types.

Type 2:
$\varepsilon'=\displaystyle\sum_{\sigma\in\Gal(k/\Q)}6\sigma$ and $p\equiv 3\bmod{4}$.

Type 3:
$k$ contains $\Q(\beta)$, and
$\varepsilon'=\displaystyle\sum_{\sigma\in\Gal(k/\Q(\beta))}12\sigma$
or $\displaystyle\sum_{\sigma\not\in\Gal(k/\Q(\beta))}12\sigma$.
%Further the prime number $p$ splits in $L$.

\end{lem}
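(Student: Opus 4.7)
My plan is to follow the proof of Lemma \ref{type23epsilon} essentially verbatim, halving all the exponents. I would begin by setting $L := \Q(\beta)$; since $\beta^2 + a\beta + q = 0$ with $|a| < 2\sqrt{q}$, the field $L$ is imaginary quadratic, and the identity $\alpha^{\varepsilon'} = \beta^{12h_k}$ places $\alpha^{\varepsilon'}$ inside $L$. The inclusion chain $\Q \subseteq \Q(\alpha^{\varepsilon'}) \subseteq L$ then splits the argument into two mutually exclusive cases: $\alpha^{\varepsilon'} \in \Q$, or $\Q(\alpha^{\varepsilon'}) = L$.

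In the first case, $|\alpha^{\varepsilon'}| = q^{6h_k}$ forces $\alpha^{\varepsilon'} = \pm q^{6h_k}$. Comparing ideals via the complete splitting $q\cO_k = \prod_\sigma \mfq^\sigma$ gives $\mfq^{\varepsilon'} = q^6\cO_k$, hence $\varepsilon' = \sum_\sigma 6\sigma$. The congruence $p \equiv 3 \pmod{4}$ then drops out of condition (ii) of Lemma \ref{epsilon'} (2): if we had $p \not\equiv 3 \pmod{4}$, each $a'_\sigma$ would be forced into $\{0,4,8,12\}$, contradicting $a'_\sigma = 6$.

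In the second case, $k \supseteq L$, and $\Gal(k/L)$-invariance of the principal ideal $\alpha^{\varepsilon'}\cO_k$ (combined with the pairwise distinctness of the primes $\mfq^\sigma$ coming from complete splitting) makes $a'_\sigma$ depend only on the coset $\sigma\,\Gal(k/L)$, producing
\[
\varepsilon' = a\sum_{\sigma \in \Gal(k/L)} \sigma + b\sum_{\sigma \notin \Gal(k/L)} \sigma
\]
with $a, b \in \{0,4,6,8,12\}$. Intersecting $\beta^{12h_k}\cO_k = \mfq^{h_k\varepsilon'}$ with $\cO_L$ yields $\beta^{12}\cO_L = q_L^{a}\, {q_L'}^b$, where $q_L, q_L'$ are the two distinct primes of $L$ above $q$ (distinct because $q$ splits in $k \supseteq L$). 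Taking absolute norms over $\Q$ gives $a+b=12$, and unique factorization of the principal ideal $(\beta\cO_L)^{12}$ in $\cO_L$ then pins $(a,b)$ down to $(12,0)$ or $(0,12)$, yielding the two Type~3 possibilities.

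The only delicate point is the derivation of $p \equiv 3 \pmod{4}$ in Case~(i): it hinges on the precise interaction of the scaled middle coefficient $6$ with the congruence constraints of Lemma \ref{epsilon'} (2), namely $6 \in \{0,6,12\}$ but $6 \notin \{0,4,8,12\}$. Otherwise the proof is a structural parallel of that of Lemma \ref{type23epsilon}, and I expect no new obstacles.
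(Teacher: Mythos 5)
Your proposal is correct and is exactly the argument the paper intends: the authors explicitly omit this proof as being "almost the same as" that of Lemma \ref{type23epsilon}, and your halved-exponent version (with the key point that $a'_\sigma=6\notin\{0,4,8,12\}$ forces $p\equiv 3\bmod 4$ via condition (ii) of Lemma \ref{epsilon'} (2)) reproduces it faithfully.
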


Define the finite sets
\noindent
$\cM'_1(k):=$
$$\Set{(\mfq,\varepsilon'_0,\beta_{\mfq})|
\mfq\in \cS,\ \varepsilon'_0=\sum_{\sigma\in\Gal(k/\Q)}a'_{\sigma}\sigma
\text{ with $a'_{\sigma}\in\{0,4,6,8,12 \}$},\ 
\beta_{\mfq}\in\cFR(\N(\mfq))},$$

\noindent
$\cM'_2(k):=\Set{\Norm_{k(\beta_{\mfq})/\Q}(\alpha_{\mfq}^{\varepsilon'_0}-\beta_{\mfq}^{12h_k})\in\Z|
(\mfq,\varepsilon'_0,\beta_{\mfq})\in\cM'_1(k)}\setminus\{0\}$,

\noindent
$\cN'_0(k):=\Set{\text{$l$ : prime number}|\text{$l$ divides some integer $m\in\cM'_2(k)$}}$,

%\noindent
%$\cT(k):=\Set{\text{$l'$ : prime number}|\text{$l'$ is divided
%by some prime $\mfq'\in \cS$}}
%\cup\{2,3\}$,

\noindent
$\cN'_1(k):=\cN'_0(k)\cup\cT(k)\cup\Ram(k)$.

\begin{thm}
\label{type23nu}

Assume that $k$ is Galois over $\Q$.
If $p\not\in\cN'_1(k)$
(and if $p$ does not divide $d$), then the character
$\nu:\G_k\longrightarrow \F_p^{\times}$
%associated to a non-CM point of $M_0^B(p)(k)$ 
is of one of the following types.

Type 2:
$\nu^{12}=\theta_p^6$ and $p\equiv 3\bmod{4}$.

Type 3:
There is an imaginary quadratic field $L$ satisfying the following
two conditions.

%\noindent
%(a)
%The prime number $p$ splits in $L$.

\noindent
(a)
The Hilbert class field $H_L$ of $L$ is contained in $k$.

\noindent
(b)
There is a prime $\mfp_L$ of $L$ lying over $p$
such that
$\nu^{12}(\mfa)\equiv\delta^{12}\bmod{\mfp_L}$ holds
for any fractional ideal $\mfa$ of $k$ prime to $p$.
Here $\delta$ is any element of $L$ such that
$\Norm_{k/L}(\mfa)=\delta\cO_L$.

\end{thm}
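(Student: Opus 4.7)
The plan is to mirror the argument for Theorem \ref{type23phi}, but applied directly to $\nu$ on $\G_k$, so the quadratic extension $K/k$ that appeared there is no longer needed (the triple $(A,i,V)$ is already defined over $k$ by hypothesis). For each prime $\mfq\in\cS$ with residual characteristic $q$, the QM-abelian surface $A\otimes_k k_{\mfq}$ acquires good reduction over a totally ramified extension $M(\mfq)/k_{\mfq}$ (\cite[Proposition 3.4 (1), p.101]{J}). Since $\nu^{12}$ is unramified outside $p$ by Lemma \ref{nuunr}, and $M(\mfq)/k_{\mfq}$ is totally ramified, $\nu|_{\G_{M(\mfq)}}(\Frob_{M(\mfq)})$ is well-defined and the Eichler--Shimura type reduction produces a Frobenius eigenvalue $\beta_{\mfq}\in\cFR(q)$ of some QM-abelian surface by $\cO$ over the prime field $\F_q$ (here we use that $q$ splits completely in $k$) with
\[
\nu|_{\G_{M(\mfq)}}(\Frob_{M(\mfq)})\equiv\beta_{\mfq}\bmod{\mfp_2}
\]
for a suitable prime $\mfp_2$ of $\Q(\beta_{\mfq}\,|\,\mfq\in\cS)$ above $p$ (replacing each $\beta_{\mfq}$ by its complex conjugate if needed). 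Consequently $\nu^{12}(\mfq)\equiv\beta_{\mfq}^{12}\bmod{\mfp_2}$, and evaluating at $\alpha_{\mfq}$ using Lemma \ref{epsilon'} (2) yields
\[
\alpha_{\mfq}^{\varepsilon'}\equiv\nu^{12}(\mfq^{h_k})\equiv\beta_{\mfq}^{12h_k}\bmod{\mfp_1}
\]
for a prime $\mfp_1$ of $k(\beta_{\mfq}\,|\,\mfq\in\cS)$ above $\mfp_2$. Therefore $p$ divides the rational integer $\Norm_{k(\beta_{\mfq})/\Q}(\alpha_{\mfq}^{\varepsilon'}-\beta_{\mfq}^{12h_k})$ for every $\mfq\in\cS$.

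Now the assumption $p\notin\cN'_1(k)$ forces each of these norms to vanish, so $\alpha_{\mfq}^{\varepsilon'}=\beta_{\mfq}^{12h_k}$ for all $\mfq\in\cS$. Fixing one $\mfq_0\in\cS$ and applying Lemma \ref{type23epsilon'} classifies $\varepsilon'$ as Type~2 or Type~3. In the Type~2 case ($\varepsilon'=\sum_{\sigma}6\sigma$ and $p\equiv 3\bmod{4}$), one has $\beta_{\mfq}^{12h_k}=\Norm_{k/\Q}(\alpha_{\mfq})^{6}>0$ and $|\beta_{\mfq}^{12h_k}|=q^{6h_k}$, so $\beta_{\mfq}=\zeta\sqrt{-q}$ for a $12h_k$-th root of unity $\zeta$; the same case-by-case analysis as in the proof of Theorem \ref{type23phi} (using that $q\nmid 6h_k$ and $\beta_{\mfq}\notin\R$) rules out every $\zeta$ except $\pm 1$, giving $\beta_{\mfq}=\pm\sqrt{-q}$. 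Then $\nu^{12}(\mfq)\equiv\beta_{\mfq}^{12}=q^{6}=\N(\mfq)^{6}\equiv\theta_p^{6}(\Frob_{\mfq})\bmod{p}$, and combined with $\nu^{12}(\gamma\cO_k)\equiv\Norm_{k/\Q}(\gamma)^{6}\bmod{p}$ for $\gamma$ prime to $p$ (coming from $\varepsilon'=\sum 6\sigma$), we conclude $\nu^{12}=\theta_p^{6}$.

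In the Type~3 case, Lemma \ref{type23epsilon'} applied to every $\mfq\in\cS$ shows that $\Q(\beta_{\mfq})$ coincides for all $\mfq$ with a single imaginary quadratic field $L\subseteq k$, and that $\varepsilon'$ is either $\sum_{\sigma\in\Gal(k/L)}12\sigma$ or $\sum_{\sigma\notin\Gal(k/L)}12\sigma$. In the first subcase the identity $\alpha_{\mfq}^{\varepsilon'}=\beta_{\mfq}^{12h_k}$ becomes $\Norm_{k/L}(\mfq)^{12h_k}=(\beta_{\mfq}\cO_L)^{12h_k}$, hence $\Norm_{k/L}(\mfq)=\beta_{\mfq}\cO_L$ is principal for each $\mfq\in\cS$, and since $\cS$ generates $Cl_k$ one gets $\Norm_{k/L}(\mfI_k)\subseteq\mfP_L$, i.e.\ $H_L\subseteq k$. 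The congruence $\nu^{12}(\mfq)\equiv\beta_{\mfq}^{12}\bmod{\mfp_2}$ then translates to $\nu^{12}(\mfa)\equiv\delta^{12}\bmod{\mfp_L}$ with $\mfp_L=\mfp_2$ and $\delta$ any generator of $\Norm_{k/L}(\mfa)$; the value $\delta^{12}$ is independent of the choice of $\delta$ since $\sharp\cO_L^{\times}\in\{2,4,6\}$ divides $12$. The second subcase is handled by the same token after applying the non-trivial $c\in\Gal(L/\Q)$, giving $\mfp_L=\mfp_2^c$.

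The only place where anything requires care is the root-of-unity case analysis in Type~2, and this is identical in shape to the argument in the proof of Theorem \ref{type23phi} (the exponent being $12h_k$ rather than $24h_k$ does not change the list of roots to eliminate, because the obstruction comes from the subfield $\Q(\zeta)$, and $q\nmid 6h_k$ already excludes $\ord\zeta\in\{3,4,6\}$ except in the degenerate case $\Q(\zeta)=\Q$). No new ideas are needed, so there is no genuine obstacle.
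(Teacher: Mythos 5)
Your proposal is correct and is exactly the adaptation the paper intends: the authors omit the proof of Theorem \ref{type23nu}, stating it is ``almost the same as (or simpler than)'' that of Theorem \ref{type23phi}, and your argument is precisely that proof with the quadratic extension $K$ removed, the exponents $24h_k$ replaced by $12h_k$, and Lemmas \ref{epsilon'} and \ref{type23epsilon'} in place of their counterparts. All the delicate points (residue field $\F_q$ via complete splitting, the vanishing of the norms forced by $p\notin\cN'_1(k)$, the root-of-unity elimination using $q\nmid 6h_k$, and the well-definedness of $\delta^{12}$) are handled as in the paper.
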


From now to Lemma \ref{q/p-1nu}, assume that $k$ is Galois over $\Q$.

\begin{prop}
\label{type2nuN2}

Suppose %$p\ne 7$ and 
$p\not\in\cN'_1(k)$.
If $\nu$ is of type 2, then $p\in\cN_2(k)$.

\end{prop}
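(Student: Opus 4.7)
The plan is to mimic the proof of Proposition \ref{type2} in the previous section, which becomes even simpler in the current setting because $(A,i)$ is already defined over $k$ itself and $\nu$ is a character of $\G_k$ directly, so there is no need to introduce an auxiliary quadratic extension $K/k$ and no transfer-map bookkeeping. Since $\nu$ of Type 2 means $\nu^{12}=\theta_p^6$ and $p\equiv 3\bmod{4}$, the first defining property of $\cN_2(k)$ (that $-p$ be a discriminant of a quadratic field) is automatic. The whole problem reduces to establishing the second condition: for every prime $q$ with $3<q<p/4$ that splits completely in $k$, one must show that $q$ does not split in $\Q(\sqrt{-p})$. By quadratic reciprocity combined with $p\equiv 3\bmod{4}$, this is equivalent to $\left(\frac{q}{p}\right)=-1$.

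To prove $\left(\frac{q}{p}\right)=-1$, I would run the contradiction argument of Lemma \ref{q/p-1}, adapted directly to $\G_k$. Assume for contradiction that $q^{(p-1)/2}\equiv 1\bmod{p}$, and pick a prime $\mfq$ of $k$ above $q$; since $q$ splits completely in $k$, the residue field at $\mfq$ is $\F_q$. By \cite[Proposition 3.4 (1), p.101]{J}, $A\otimes_k k_{\mfq}$ acquires good reduction over some totally ramified finite extension $M/k_{\mfq}$, so the residue field of $M$ is still $\F_q$ and $\theta_p(\Frob_M)\equiv q\bmod{p}$. Let $\beta\in\cFR(q)$ be the Frobenius eigenvalue of the reduction with $\nu(\Frob_M)\equiv\beta$ modulo a suitable prime $\mfp_0\mid p$ of $\Q(\beta)$. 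Using $\det\rhob_{A,p}=\theta_p$, the other diagonal character of $\rhob_{A,p}$ is $\theta_p\nu^{-1}$, whose value at $\Frob_M$ reduces to the complementary eigenvalue: $\theta_p\nu^{-1}(\Frob_M)\equiv\betab\bmod{\mfp_0}$.

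Set $\psi:=\nu\theta_p^{-(p+1)/4}$, which is well-defined because $p\equiv 3\bmod{4}$. A direct computation using $\theta_p^{p-1}=1$ yields
$$\psi^{12}=\nu^{12}\theta_p^{-3(p+1)}=\theta_p^6\theta_p^{-3(p+1)}=\theta_p^{-3(p-1)}=1.$$
Since $p-1\equiv 2\bmod{4}$, the order of $\psi(\Frob_M)\in\F_p^{\times}$ divides $\gcd(12,p-1)\in\{2,6\}$, so $\psi(\Frob_M)^6=1$ and therefore $\psi(\Frob_M)^2+\psi(\Frob_M)^{-2}\equiv -1$ or $2\bmod{p}$. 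Expressing $\beta=\psi(\Frob_M)\theta_p(\Frob_M)^{(p+1)/4}$ and using $\theta_p(\Frob_M)^{(p+1)/2}\equiv q\cdot q^{(p-1)/2}\equiv q\bmod{p}$, one obtains
$$\beta^2+\betab^2\equiv q\bigl(\psi(\Frob_M)^2+\psi(\Frob_M)^{-2}\bigr)\equiv -q \text{ or } 2q\bmod{p},$$
so $(\beta+\betab)^2\equiv q$ or $4q\bmod{p}$. The strict Weil bound $|\beta+\betab|<2\sqrt{q}$ together with $q<p/4$ then force this congruence to be an equality in $\Z$, making $q$ or $4q$ a perfect square — impossible since $q$ is prime.

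Hence $\left(\frac{q}{p}\right)=-1$ for every such $q$, and quadratic reciprocity (using $p\equiv 3\bmod{4}$) converts this into $\left(\frac{-p}{q}\right)=-1$, so $q$ is inert in $\Q(\sqrt{-p})$. This verifies both defining conditions of $\cN_2(k)$ and concludes $p\in\cN_2(k)$. The main technical step, as in Section \ref{charI}, is tracking which Frobenius eigenvalue of the reduction corresponds to $\nu(\Frob_M)$ versus the complementary character $\theta_p\nu^{-1}$ and checking that the resulting congruence can be upgraded to an equality in $\Z$ via the Weil bound; once that is done, the quadratic-reciprocity endgame is essentially verbatim that of Proposition \ref{type2}.
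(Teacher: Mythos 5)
Your proposal is correct and follows exactly the route the paper intends: it is the adaptation of Lemma \ref{type2lambda} and Lemma \ref{q/p-1} (i.e.\ the content of Lemmas \ref{type2nu} and \ref{q/p-1nu}, whose proofs the paper omits as ``the same as or simpler than'' those of Section \ref{charI}) followed by the quadratic-reciprocity endgame of Proposition \ref{type2}. Your observation that $\psi^{12}=1$ holds identically from $\nu^{12}=\theta_p^6$, so that no class-group argument or auxiliary extension $K/k$ is needed, is precisely the simplification the paper alludes to.
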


Proposition \ref{type2nuN2} follows from the following two lemmas.

\begin{lem}
\label{type2nu}

Suppose %$p\ne 7$ and 
$p\not\in\cN'_1(k)$.
If $\nu$ is of type 2, then
there is a character $\psi':\G_k\longrightarrow\F_p^{\times}$
such that $\psi'^6=1$ and
$\nu=\psi'\theta_p^{\frac{p+1}{4}}$.

\end{lem}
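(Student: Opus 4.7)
The plan is to define $\psi':=\nu\,\theta_p^{-(p+1)/4}$; note that this makes sense because type $2$ forces $p\equiv 3\bmod{4}$, so $(p+1)/4$ is a positive integer. The relation $\nu=\psi'\,\theta_p^{(p+1)/4}$ then holds tautologically by the definition of $\psi'$, so the entire content of the lemma reduces to the assertion $\psi'^{6}=1$.

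I would first verify the weaker statement $\psi'^{12}=1$ by a direct calculation using $\nu^{12}=\theta_p^{6}$ (which is the type $2$ hypothesis). Explicitly,
\[
\psi'^{12}=\nu^{12}\,\theta_p^{-3(p+1)}=\theta_p^{6-3(p+1)}=\theta_p^{-3(p-1)}=1,
\]
where the last step uses $\theta_p^{p-1}=1$. At this stage we only know that $\psi'$ has order dividing $12$, and the remaining task is to improve $12$ to $6$.

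The key is the following elementary group-theoretic observation: $\psi'$ is a homomorphism into $\F_p^{\times}$, a cyclic group of order $p-1$, so the order of its image divides $p-1$; combined with $\psi'^{12}=1$, the order of $\psi'$ divides $\gcd(12,p-1)$. Now type $2$ gives $p\equiv 3\bmod{4}$, hence $p-1\equiv 2\bmod{4}$, so $4\nmid p-1$. Therefore $\gcd(12,p-1)\in\{2,6\}$ (it is $2$ or $6$ according as $3\nmid p-1$ or $3\mid p-1$), and in particular divides $6$. We conclude $\psi'^{6}=1$, as required.

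There is no serious obstacle: the proof is essentially a one-line character calculation together with the elementary arithmetic constraint $4\nmid p-1$ coming from the congruence condition built into type $2$. The hypothesis $p\not\in\cN'_1(k)$ is not used in this lemma; it is presumably reserved for the companion statement (the analogue of Lemma \ref{q/p-1}) that will combine with Lemma \ref{type2nu} to establish the Legendre-symbol condition placing $p$ into $\cN_2(k)$, thereby completing the proof of Proposition \ref{type2nuN2}.
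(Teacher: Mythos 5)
Your proof is correct and is essentially the paper's intended argument: the paper omits the proof of this lemma as ``almost the same as (or simpler than)'' the corresponding step in Section \ref{charI}, where exactly the character $\psi=\lambda\theta_p^{-\frac{p+1}{4}}$ is introduced and the passage from exponent $12$ (or $24$) down to $6$ is made via the observation that $p-1\equiv 2\bmod 4$. Your tautological definition of $\psi'$, the computation $\psi'^{12}=\theta_p^{-3(p-1)}=1$, and the $\gcd(12,p-1)\mid 6$ step reproduce that argument in the simpler setting where $\nu$ is already a character of $\G_k$.
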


\begin{lem}
\label{q/p-1nu}

Suppose %$p\ne 7$ and 
$p\not\in\cN'_1(k)$.
Assume that $\nu$ is of type 2.
Let $q<\frac{p}{4}$ be a prime number
which splits completely in $k$.
Then we have
$\left(\frac{q}{p}\right)=-1$.

\end{lem}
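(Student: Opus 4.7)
The plan is to follow the proof of Lemma \ref{q/p-1} almost verbatim, substituting $\nu$ for $\lambda$ and dispensing with the auxiliary quadratic extension: since $(A,i)$ and $V$ are already defined over $k$ in the present situation, $\nu$ lives on $\G_k$ directly, and the prime $\mfq$ of $k$ above $q$ already has residue field $\F_q$ because $q$ splits completely in $k$. This replaces the role played by the ramification of $\mfq_K/\mfq$ in the original argument.

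First I would argue by contradiction and suppose $\left(\frac{q}{p}\right)=1$, equivalently $q^{(p-1)/2}\equiv 1\pmod p$. By Lemma \ref{type2nu} we can write $\nu=\psi'\theta_p^{(p+1)/4}$ with $\psi'^6=1$. Fix any prime $\mfq$ of $k$ above $q$, and invoke the potential good reduction of QM-abelian surfaces (\cite[Proposition 3.4 (1), p.101]{J}) to choose a totally ramified finite extension $M/k_{\mfq}$ over which $A$ acquires good reduction. Modulo a suitable prime $\mfp_0$ of $\Q(\beta)$ above $p$, the value $\nu(\Frob_M)$ is congruent to a Frobenius eigenvalue $\beta\in\cFR(q)$ of some QM-abelian surface over the prime field $\F_q$; using the Weil-pairing formalism from Section \ref{QM} (which pins down $\det\rhob_{A,p}=\theta_p$), the other eigenvalue $\bar\beta$ is congruent to $\nu^{-1}\theta_p(\Frob_M)$ modulo $\mfp_0$.

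Next, a short calculation—using $\theta_p(\Frob_M)\equiv q\pmod p$, the standing hypothesis $q^{(p-1)/2}\equiv 1\pmod p$, and $\psi'(\Frob_M)^6=1$ (which forces $\psi'(\Frob_M)^2+\psi'(\Frob_M)^{-2}\equiv -1$ or $2\pmod p$), together with $\beta\bar\beta=q$—gives
$$(\beta+\bar\beta)^2\equiv q\ \text{or}\ 4q\pmod p.$$
The finish is the same archimedean estimate as in Lemma \ref{q/p-1}: the inequality $|\beta+\bar\beta|<2\sqrt{q}$ (since $\beta\in\cFR(q)$) together with $q<p/4$ yields $|(\beta+\bar\beta)^2-q|<3q<p$ and $|(\beta+\bar\beta)^2-4q|\le 4q<p$. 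Hence the congruence upgrades to an equality of rational integers, but $\beta+\bar\beta\in\Z$ and neither $q$ nor $4q$ is a perfect square for the prime $q$, which is the desired contradiction.

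The main obstacle I anticipate is purely bookkeeping: making precise the identification $\bar\beta\equiv\nu^{-1}\theta_p(\Frob_M)\pmod{\mfp_0}$ (which requires the determinant/Weil-pairing statement for $\rhob_{A,p}$ in this reducible situation), and confirming that the potential-good-reduction input of \cite{J} supplies a \emph{totally ramified} extension of $k_{\mfq}$ even at small residue characteristics $q\in\{2,3\}$. Both ingredients are contained in the preceding sections and in the corresponding step of Lemma \ref{q/p-1}, so no genuinely new input is needed.
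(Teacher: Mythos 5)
Your proposal is correct and is exactly the route the paper intends: the authors explicitly omit this proof because it is "almost the same as (or simpler than)" that of Lemma \ref{q/p-1}, and your substitutions (working directly over $k_{\mfq}$, using that the residue field is already the prime field $\F_q$ since $q$ splits completely, and replacing the appeal to Lemma \ref{type2lambda}(ii) by the global identity $\nu=\psi'\theta_p^{(p+1)/4}$ from Lemma \ref{type2nu}) are precisely the right adjustments. The final congruence $(\beta+\overline{\beta})^2\equiv q$ or $4q\pmod p$ and the archimedean bound then yield the same contradiction as in Lemma \ref{q/p-1}.
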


\noindent
(Proof of Theorem \ref{mainthm0})

Let $k$ be a quadratic field which is not an imaginary quadratic
field of class number one, and let $p$ be a prime number not dividing $d$.
Take a point $x\in M_0^B(p)(k)$.

(1)
Suppose $B\otimes_{\Q}k\cong\M_2(k)$.

(1-i)
Assume $\Aut(x)\ne\{\pm 1\}$
or $\Aut(x')\not\cong\Z/4\Z$.
Then $x$ is represented by a triple $(A,i,V)$ defined over $k$
by Proposition \ref{fieldM0Bp} (1),
and we have a character
$\nu:\G_k\longrightarrow\F_p^{\times}$ as in (2.2). %(\ref{lambda}).
Assume $p\not\in\cN'_1(k)$.
Then the character $\nu$ is of type 2 or type 3
in Theorem \ref{type23nu}.
But type 3 is impossible since
$k$ is a quadratic field which is not an imaginary quadratic field of class number one.
Therefore $\nu$ is of type 2, and so $p\in\cN_2(k)$ %\cup\{ 7\}$
by Proposition \ref{type2nuN2}.

(1-ii)
Assume otherwise (i.e. $\Aut(x)=\{\pm 1\}$
and $\Aut(x')\cong\Z/4\Z$).
Then $x$ is represented by a triple $(A,i,V)$ defined over a quadratic extension of $k$
by Proposition \ref{fieldM0Bp} (2),
and we have a character
$\varphi:\G_k\longrightarrow\F_p^{\times}$ as in (\ref{phi}).
By Theorem \ref{type23phi} and Proposition \ref{type2},
we have $p\in\cN_1(k)\cup\cN_2(k)\cup\{5,7,13\}$.

(2)
Suppose $B\otimes_{\Q}k\not\cong\M_2(k)$.
Further assume that $x$ is not an elliptic point of order $2$ or $3$;
this implies $\Aut(x)=\{\pm 1\}$.
By the same argument as in (1-ii),
we have $p\in\cN_1(k)\cup\cN_2(k)\cup\{5,7,13\}$.

\qed

\section{Examples of points on Shimura curves of genus zero}
\label{ex}

The genus of the Shimura curve $M^B$ is zero if and only if
$d\in\{6,10,22\}$ (\cite[Lemma 3.1, p.168]{A}).
The defining equations of such $M^B$'s are the following by \cite[Theorem 1-1, p.279]{K}.

\begin{equation*}
\begin{cases}
d=6\ :\ x^2+y^2+3=0,\\
d=10\ :\ x^2+y^2+2=0,\\
d=22\ :\ x^2+y^2+11=0.
\end{cases}
\end{equation*}

\noindent
In these cases, for a number field $k$ the condition $M^B(k)\ne\emptyset$
implies $M^B\otimes_{\Q}k\cong\pP^1_k$, and so
$\sharp M^B(k)=\infty$.

\begin{prop}
\label{MBknotempty}

Let $k$ be a quadratic field and let $d=6$ (\resp $10$, \resp $22$).

(1)
We have $M^B(k)\ne\emptyset$ if and only if $k$ is imaginary
and $3$ (\resp $2$, \resp $11$) does not split in $k$.

(2)
We have $B\otimes_{\Q}k\cong\M_2(k)$ if and only if
neither $2$ nor $3$ (\resp neither $2$ nor $5$, \resp neither $2$ nor $11$)
splits in $k$.

\end{prop}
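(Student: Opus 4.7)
The two parts rest on local-global principles, for conics and for quaternion algebras respectively. For part (1), the preceding discussion identifies $M^B$ with the plane conic $C_q:X^2+Y^2+qZ^2=0$, where $q=3,2,11$ when $d=6,10,22$. Since $C_q$ is a smooth conic of genus zero, $M^B(k)\neq\emptyset$ is equivalent to $C_q(k)\neq\emptyset$, and by Hasse--Minkowski this holds if and only if $C_q(k_v)\neq\emptyset$ for every place $v$ of $k$.

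The first step is to determine the places of $\Q$ where $C_q$ has no points, equivalently where the Hilbert symbol $(-1,-q)_\ell$ equals $-1$. The plan is to show this set is exactly $\{\infty,q\}$ in all three cases. The archimedean failure is clear from positive definiteness of the form. For $q=3,11$ the failure at $\ell=q$ follows by infinite descent modulo $q$: since $-1$ is a nonsquare modulo $q$, any primitive integral solution must have all three coordinates divisible by $q$, which upon substitution reproduces the original equation. For $q=2$ the failure at $\ell=2$ is checked directly, either via the standard formula for the Hilbert symbol at $2$ or by noting that squares in $\Z_2$ are congruent to $0,1,4\pmod 8$, so $-2\equiv 6\pmod 8$ is not a sum of two squares. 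At all other primes, the tame formulas for the Hilbert symbol (or an explicit Hensel lift of a solution modulo $8$ or $16$) give $(-1,-q)_\ell=+1$.

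Next I would propagate these local obstructions to $k$. At any finite place $v$ of $k$ lying over $\ell\neq q$, the inclusion $\Q_\ell\hookrightarrow k_v$ already produces $k_v$-points on $C_q$. At an archimedean place, positive definiteness forces no solutions over $\R$, so a real place of $k$ would obstruct $k$-rationality; therefore $M^B(k)\neq\emptyset$ requires $k$ to be imaginary. At a place $v$ above $q$, the completion $k_v$ equals $\Q_q$ when $q$ splits (giving no points), and is a quadratic extension of $\Q_q$ when $q$ is inert or ramified; in the latter case the unique quaternion division algebra over $\Q_q$ is split by every quadratic extension, so $C_q(k_v)\neq\emptyset$. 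Assembling these yields part (1).

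Part (2) is a direct application of Brauer--Hasse--Noether and the local splitting criterion for quaternion algebras: $B\otimes_\Q k\cong\M_2(k)$ if and only if, at every place $w$ of $k$ lying over a prime $v$ of $\Q$ where $B$ ramifies, the completion $k_w/\Q_v$ is nontrivial, i.e.\ $v$ does not split in $k$. Since $B$ is indefinite there is no archimedean constraint, and the finite ramified primes of $B$ are $\{2,3\}$, $\{2,5\}$, $\{2,11\}$ for $d=6,10,22$ respectively, yielding the stated criterion. The only mildly delicate calculation in the whole argument is the Hilbert-symbol evaluation at $\ell=2$; everything else is routine assembly.
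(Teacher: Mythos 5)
Your proof is correct and follows essentially the same route as the paper: identify $M^B$ with the explicit conic from Kurihara's equations, determine the local obstructions (which you verify lie exactly at $\infty$ and at $3$, $2$, $11$ respectively), use that a quadratic extension of $\Q_q$ splits the local division algebra, and conclude by the Hasse principle; part (2) is the same local splitting criterion for $B\otimes_{\Q}k$ that the paper invokes via Lemma 4.3. You merely spell out the Hilbert-symbol computations that the paper leaves implicit.
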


\begin{proof}

(1)
Obviously $M^B(\R)=\emptyset$ in these cases.
Let $p$ be a prime number.
For $d=6$ (\resp $10$, \resp $22$), we have
$M^B(\Q_p)=\emptyset$ if and only if $p=3$ (\resp $2$, \resp $11$).
For any quadratic extension $L$ of $\Q_p$, we have
$M^B(L)\ne\emptyset$ in these cases.
Then we have done by Hasse principle.

(2)
See the proof of Lemma \ref{fieldofdef}.

\end{proof}

Concerning points over imaginary quadratic fields on $M^B$ of genus zero,
we have the following examples.

(Case $d=6$).

$\bullet$
For $k\in\{\Q(\sqrt{-13}), \Q(\sqrt{-21}), \Q(\sqrt{-37})\}$,
we have
$h_k\ne 1$, $B\otimes_{\Q}k\cong\M_2(k)$ and $M^B(k)\ne\emptyset$.

$\bullet$
For $k\in\{\Q(\sqrt{-31}), \Q(\sqrt{-39})\}$,
we have
$h_k\ne 1$, $B\otimes_{\Q}k\not\cong\M_2(k)$ and $M^B(k)\ne\emptyset$.

(Case $d=10$).

$\bullet$
For $k\in\{\Q(\sqrt{-17}), \Q(\sqrt{-22}), \Q(\sqrt{-33}), \Q(\sqrt{-38})\}$,
we have
$h_k\ne 1$, $B\otimes_{\Q}k\cong\M_2(k)$ and $M^B(k)\ne\emptyset$.

$\bullet$
For $k\in\{\Q(\sqrt{-6}), \Q(\sqrt{-34}), \Q(\sqrt{-51})\}$,
we have
$h_k\ne 1$, $B\otimes_{\Q}k\not\cong\M_2(k)$ and $M^B(k)\ne\emptyset$.

(Case $d=22$).

$\bullet$
For $k\in\{\Q(\sqrt{-5}), \Q(\sqrt{-33}), \Q(\sqrt{-37})\}$,
we have
$h_k\ne 1$, $B\otimes_{\Q}k\cong\M_2(k)$ and $M^B(k)\ne\emptyset$.

$\bullet$
For $k\in\{\Q(\sqrt{-15}), \Q(\sqrt{-23}), \Q(\sqrt{-31}), \Q(\sqrt{-47})\}$,
we have
$h_k\ne 1$, $B\otimes_{\Q}k\not\cong\M_2(k)$ and $M^B(k)\ne\emptyset$.

%
%For each $B$ as above, notice that there are infinitely many imaginary quadratic
%fields $k$ such that $M^B(k)\ne\emptyset$ holds.

\section{Consequence to Galois images}
\label{galim}

\begin{thm}
\label{irred}
Let $k$ be an imaginary quadratic field with $h_k\geq 2$.
Then for any QM-abelian surface $(A,i)$ by $\cO$ over $k$ and for any prime number $p$
not dividing $d$
with $p\not\in\cN'_1(k)\cup\cN_2(k)$, %\cup\{ 7\}$,
the representation
$$\rhob_{A,p}:\G_k\longrightarrow\GL_2(\F_p)$$
is irreducible.

\end{thm}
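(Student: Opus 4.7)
The plan is to argue by contradiction, using the classification results of Section \ref{charII}. Suppose $\rhob_{A,p}$ is reducible; then there is a $1$-dimensional sub-representation whose character $\nu:\G_k\longrightarrow\F_p^{\times}$ is exactly the character attached, as in Section \ref{charII}, to the triple $(A,i,V)$ where $V$ is the associated $\cO$-stable $\F_p$-plane in $A[p](\kb)$. Since $k$ is a quadratic field it is automatically Galois over $\Q$, so Theorem \ref{type23nu} applies and, because $p\not\in\cN'_1(k)$, the character $\nu$ must be of type 2 or type 3.

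I would first eliminate type 2. If $\nu$ is of type 2, then Proposition \ref{type2nuN2} (whose hypothesis $p\not\in\cN'_1(k)$ is exactly part of our assumption) forces $p\in\cN_2(k)$, contradicting $p\not\in\cN_2(k)$.

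Next I would eliminate type 3 by a simple degree comparison. Type 3 asserts the existence of an imaginary quadratic field $L$ such that the Hilbert class field $H_L$ of $L$ is contained in $k$. Since $[H_L:\Q]=2h_L$ and $[k:\Q]=2$, the inclusion $H_L\subseteq k$ forces $h_L=1$ and hence $H_L=L=k$. But then $h_k=h_L=1$, contradicting the hypothesis $h_k\geq 2$.

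Both cases being ruled out, $\rhob_{A,p}$ cannot be reducible, which proves the theorem. The content of the argument is essentially already packaged in the results of Sections \ref{charI}--\ref{charII}; there is no serious obstacle beyond invoking them correctly. The only point requiring a moment's care is the bookkeeping that Theorem \ref{type23nu} and Proposition \ref{type2nuN2} are applicable in our setting (Galois $k/\Q$, $p\nmid d$, $p\not\in\cN'_1(k)$), all of which are immediate from the hypotheses.
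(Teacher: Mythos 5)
Your argument is correct and is essentially the paper's own proof: the paper simply refers back to step (1-i) of the proof of Theorem \ref{mainthm0}, which is exactly your reduction to Theorem \ref{type23nu}, the elimination of type 3 because $H_L\subseteq k$ forces $h_k=1$, and the elimination of type 2 via Proposition \ref{type2nuN2}. No differences worth noting.
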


\begin{proof}

Suppose $p\nmid d$ and that $\rhob_{A,p}$ is reducible.
Then there is a $1$-dimensional sub-representation of $\rhob_{A,p}$;
let $\nu:\G_k\longrightarrow\F_p^{\times}$ be its associated character.
Then by repeating exactly the same argument as in (1-i) in the proof of
Theorem \ref{mainthm0} at the end of \S \ref{charII},
we conclude
$p\in\cN'_1(k)\cup\cN_2(k)$. %\cup\{ 7\}$.

\end{proof}

\section{Application to a finiteness conjecture on abelian varieties}
\label{apply}

For a number field $k$ and a prime number $p$, let $\ktil_p$ denote the maximal pro-$p$
extension of $k(\pmu_p)$ in $\Qb$ which is unramified away from $p$,
where $\pmu_p$ is the group of $p$-th roots of unity in $\Qb$.
For a number field $k$, an integer $g\geq 0$ and a prime number $p$,
let
$\sA(k,g,p)$ denote the set of $k$-isomorphism classes $[A]$ of abelian varieties
$A$ over $k$, of dimension $g$, which satisfy
$$k(A[p^{\infty}])\subseteq\ktil_p,$$
where $k(A[p^{\infty}])$ is the field generated over $k$ by the $p$-power torsion of $A$.
We know that $\sA(k,g,p)$ is a finite set
(\cite[1. Theorem, p.309]{Z}, \cf \cite[Satz 6, p.363]{Fa}).
For fixed $k$ and $g$, define the set
$$\sA(k,g):=\{([A],p)\mid [A]\in\sA(k,g,p)\}.$$
We have the following finiteness conjecture on abelian varieties.

\begin{conj}[{\cite[Conjecture 1, p.1224]{RT}}]
\label{RTconj}

Let $k$ be a number field, and let $g\geq 0$ be an integer.
Then the set $\sA(k,g)$
is finite.

\end{conj}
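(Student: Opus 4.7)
The plan is to split Conjecture \ref{RTconj} into two independent parts: (i) for each prime $p$, the set $\sA(k,g,p)$ is finite, and (ii) the set of primes $p$ with $\sA(k,g,p)\ne\emptyset$ is finite. Part (i) is the Zarhin--Faltings finiteness theorem cited immediately before the conjecture. The substance of the conjecture is therefore (ii): bounding the primes $p$ for which some $g$-dimensional abelian variety $A/k$ can satisfy $k(A[p^{\infty}])\subseteq\ktil_p$.

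First I would convert the RT hypothesis into a constraint on $\rhob_{A,p}\colon\G_k\to\GL_{2g}(\F_p)$. Since $\Gal(\ktil_p/k(\pmu_p))$ is a pro-$p$ group, every Jordan--H\"older constituent of $\rhob_{A,p}$ has order prime to $p$ and factors through $\Gal(k(\pmu_p)/k)\subseteq\F_p^{\times}$; equivalently, each constituent is a power of the mod $p$ cyclotomic character $\theta_p$ twisted by a finite-order character unramified outside $p$. This is precisely the character shape whose rigidity is analyzed in Sections \ref{charI}--\ref{charII}, and it forces the relevant mod $p$ representations to decompose into pieces amenable to the Mazur--Momose character machinery.

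Next I would proceed by cases on $\End_{\Qb}(A)\otimes\Q$. If $A$ is $\Qb$-isogenous to a product of elliptic curves, reduce to $X_0(p)(k')$ for a suitable finite extension $k'/k$ and apply Theorem \ref{X0p}. If $A$ has CM, the mod $p$ image is abelian and the constituents correspond to algebraic Hecke characters; the RT condition forces those Gr\"ossencharacters to be cyclotomic twists, which bounds $p$ in terms of the discriminants of $k$ and of the CM field. If $A$ is $\Qb$-isogenous to a QM-abelian surface over an imaginary quadratic $k$ with $h_k\geq 2$, apply Theorem \ref{irred}: the RT condition forces $\rhob_{A,p}$ to be reducible, contradicting Theorem \ref{irred} for every $p\notin\cN'_1(k)\cup\cN_2(k)$. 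Assembling these case-by-case bounds yields (ii) whenever the endomorphism algebra of $A_{\Qb}$ falls under one of them.

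The hard part will be the remaining cases: $g\geq 2$ with $\End_{\Qb}(A)\otimes\Q$ neither containing a full-rank CM field nor arising from a moduli problem for which an analogue of Theorem \ref{irred} is available. For $g=2$ this amounts to QM-abelian surfaces over base fields outside the class treated by Theorem \ref{irred} (e.g.\ imaginary quadratic $k$ with $h_k=1$, or non-Galois quartic $k$), while for $g\geq 3$ no analogue of the character classification of Sections \ref{charI}--\ref{charII} is presently known: Frobenius eigenvalues on simple factors of $A\bmod\mfq$ need no longer lie in an imaginary quadratic field, so the key divisibility step behind Lemma \ref{type23epsilon'} and its type-2/type-3 dichotomy collapses, and no substitute input is currently available. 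Conjecture \ref{RTconj} therefore remains open in general; the present paper contributes the QM slice for $g=2$ over imaginary quadratic fields with $h_k\geq 2$ as a new input toward the full conjecture, and a complete proof awaits analogues of Theorem \ref{irred} for the remaining Shimura-type moduli of abelian varieties.
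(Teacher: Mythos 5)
The statement you were given is Conjecture \ref{RTconj}, and the paper does not prove it: it is imported verbatim from \cite{RT} as an open finiteness conjecture, and the paper's actual contribution consists of the special cases Theorem \ref{RTB} and Corollary \ref{RTQM} (QM-abelian surfaces over an imaginary quadratic field $k$ with $h_k\geq 2$), deduced from Theorem \ref{irred}. Your proposal is therefore correctly calibrated in its final assessment: the reduction of the conjecture to showing $\sA(k,g,p)=\emptyset$ for all but finitely many $p$ (the finiteness of each $\sA(k,g,p)$ being Zarhin--Faltings, exactly as the paper notes), the translation of the hypothesis $k(A[p^{\infty}])\subseteq\ktil_p$ into the statement that the Jordan--H\"older constituents of $\rhob_{A,p}$ are essentially powers of $\theta_p$, and the explicit concession that the general case is out of reach all agree with the status reported in the paper, which claims nothing beyond the QM slice for $g=2$.

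As an attempted proof, however, the intermediate case analysis has gaps beyond the ones you admit. The trichotomy by $\End_{\Qb}(A)\otimes\Q$ is not exhaustive, and none of the three sketched cases is actually closed: for a product of elliptic curves the RT condition must first be shown to yield a $k$-rational $p$-isogeny on an elliptic curve over $k$ (this is how \cite{RT} prove Theorem \ref{A(Q,1)} for $g=1$), whereas passing to an auxiliary extension $k'$ makes Theorem \ref{X0p}(2) inapplicable as stated, since the exceptional set $\mfS(k')$ depends on $k'$, which varies with $A$, and $k'$ may be one of the excluded class-number-one imaginary quadratic fields; the CM case is an assertion (``bounds $p$ in terms of the discriminants''), not an argument; and Theorem \ref{irred} requires the QM structure to be defined over $k$ itself with $k$ imaginary quadratic of class number at least $2$, which ``$\Qb$-isogenous to a QM-abelian surface'' does not supply (indeed Proposition \ref{fieldMB} shows descent of the QM structure to $k$ is obstructed unless $B\otimes_{\Q}k\cong\M_2(k)$). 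So your text should be read as an accurate survey of what is known and of where the obstruction lies, matching the paper --- which likewise offers no proof of Conjecture \ref{RTconj} --- rather than as a proof of the statement.
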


As an application of Theorem \ref{X0p}, the following is known.

\begin{thm}[{\cite[Theorem 2, p.1224 and Theorem 4, p.1227]{RT}}]
\label{A(Q,1)}

Let $k$ be $\Q$ or a quadratic field which is not an imaginary quadratic field
of class number one.
Then the set $\sA(k,1)$ is finite.

\end{thm}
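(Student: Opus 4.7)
The plan is to split $\sA(k,1)$ by prime via $\sA(k,1)=\bigsqcup_{p}\sA(k,1,p)\times\{p\}$. Since each $\sA(k,1,p)$ is already finite by the cited result of Zarhin, it suffices to show that $\sA(k,1,p)=\emptyset$ for all but finitely many primes $p$; $\sA(k,1)$ will then be a finite union of finite sets.

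Fix $p$ and an elliptic curve $A/k$ with $[A]\in\sA(k,1,p)$, and analyze the mod $p$ representation $\rho\colon\G_k\to\GL_2(\F_p)=\Aut(A[p])$. The hypothesis $k(A[p^{\infty}])\subseteq\ktil_p$ forces $k(A[p^{\infty}])/k(\pmu_p)$ to be pro-$p$, so $P:=\rho(\G_{k(\pmu_p)})$ is a finite $p$-subgroup of $\GL_2(\F_p)$. If $P\ne 1$, then $P$ is a full Sylow $p$-subgroup, so after conjugation $P$ equals the standard unipotent upper-triangular subgroup $U$; since $\G_{k(\pmu_p)}\triangleleft\G_k$, the full image $H:=\rho(\G_k)$ normalizes $P=U$ and therefore lies in the Borel subgroup $N_{\GL_2(\F_p)}(U)$. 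If $P=1$, then $\rho$ factors through $\Gal(k(\pmu_p)/k)$, so $H$ is abelian of order dividing $p-1$; by Maschke's theorem and the fact that all characters of such an $H$ take values in $\F_p^{\times}$ (because $\F_p^{\times}$ contains all $(p-1)$-th roots of unity), the $\F_p[H]$-module $A[p]$ splits as a sum of two $\G_k$-stable lines. In every case $A$ acquires a $k$-rational cyclic subgroup $C$ of order $p$, and the pair $(A,C)$ defines a non-cuspidal $k$-rational point of $X_0(p)$.

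At this stage Theorem~\ref{X0p} closes the argument. For $k=\Q$, part (1) forces $p$ to belong to the explicit finite set $\{2,3,5,7,11,13,17,19,37,43,67,163\}$; for $k$ a quadratic field that is not an imaginary quadratic field of class number one, part (2) forces $p\in\mfS(k)$. Either way $\sA(k,1,p)=\emptyset$ for $p$ outside a finite set of exceptional primes, and combining with the finiteness of $\sA(k,1,p)$ at each of those exceptional primes gives the finiteness of $\sA(k,1)$.

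Nothing deep is produced here: the heavy inputs (Zarhin/Faltings for fixed $p$ and Mazur/Momose for varying $p$) are used as black boxes. The only step that is not entirely automatic is the $P=1$ case of the Galois-image analysis, where one must check that an abelian subgroup of $\GL_2(\F_p)$ of order dividing $p-1$ acts completely reducibly on $\F_p^{2}$ with $\F_p$-rational characters; this follows at once from Maschke together with the divisibility constraint on $|H|$, so I anticipate no serious obstacle beyond clean exposition.
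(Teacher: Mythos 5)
Your argument is correct and is essentially the proof of Rasmussen--Tamagawa that the paper cites here without reproducing: the constraint $k(A[p^\infty])\subseteq\ktil_p$ forces $\rhob_{A,p}$ into a Borel (this is [RT, Lemma 3]), yielding a non-cuspidal point of $X_0(p)(k)$, after which Theorem \ref{X0p} bounds $p$ and Zarhin's theorem finishes each remaining $\sA(k,1,p)$. No discrepancy to report.
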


Here we study the case where $g=2$.
For an indefinite quaternion division algebra $B$ over $\Q$,
let $\sA(k,2,p)_B$ be the subset of $\sA(k,2,p)$
consisting of abelian varieties $A$ over $k$ whose endomorphism algebra $\End_k(A)$ contains
a maximal order $\cO$ of $B$ as a subring.
Define also the set
$$\sA(k,2)_B:=\{([A],p)\mid [A]\in\sA(k,2,p)_B\},$$
which is a subset of $\sA(k,2)$.
If one of the following two conditions
is satisfied, we know that the set $\sA(k,2)_B$ is empty
(\cite[Theorem 0, p.136]{Sh} and \cite[Theorem (1.1), p.93]{J}).

\noindent
(i) $k$ has a real place.

\noindent
(ii) $B\otimes_{\Q}k\not\cong\M_2(k)$.

As an application of Theorem \ref{irred}, we have the following.

\begin{thm}
\label{RTB}

Let $k$ be an imaginary quadratic field with $h_k\geq 2$.
Then the set $\sA(k,2)_B$ is finite.

\end{thm}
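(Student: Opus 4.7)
The plan is to combine Theorem \ref{irred} with the known finiteness of $\sA(k,2,p)$. Fix $([A],p)\in \sA(k,2)_B$. By hypothesis $\End_k(A)$ contains a maximal order $\cO$ of $B$, so after choosing such an embedding $i$ we may view $(A,i)$ as a QM-abelian surface over $k$ to which the machinery of Sections \ref{QM}--\ref{charII} applies, and we have $k(A[p^{\infty}])\subseteq \ktil_p$.

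The first step is to translate this torsion-field condition into reducibility of $\rhob_{A,p}$. The Weil pairing gives $k(\mu_p)\subseteq k(A[p])$, so the finite extension $k(A[p])/k(\mu_p)$ is contained in the pro-$p$ extension $\ktil_p/k(\mu_p)$; therefore $\rhob_{A,p}(\G_{k(\mu_p)})=\Gal(k(A[p])/k(\mu_p))$ is a $p$-subgroup of $\GL_2(\F_p)$. After conjugation it lies in the unipotent upper-triangular group and in particular fixes some line $L\subseteq \F_p^2$. I would then upgrade this to reducibility of $\rhob_{A,p}$ itself by case analysis: if $\rhob_{A,p}(\G_{k(\mu_p)})$ is nontrivial, then $L$ is the \emph{unique} fixed line, and normality of $\G_{k(\mu_p)}$ in $\G_k$ forces every Galois conjugate $\sigma(L)$ also to be fixed by $\G_{k(\mu_p)}$, so $L$ must be $\G_k$-stable; if $\rhob_{A,p}(\G_{k(\mu_p)})$ is trivial, then $\rhob_{A,p}$ factors through $\Gal(k(\mu_p)/k)\hookrightarrow \F_p^{\times}$, and its image is cyclic of order dividing $p-1$, hence diagonalisable over $\F_p$. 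In either case $\rhob_{A,p}$ has a $\G_k$-stable line and is reducible.

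At this point Theorem \ref{irred} forces $p\mid d$ or $p\in \cN'_1(k)\cup \cN_2(k)$, a finite set $\mathcal{E}=\mathcal{E}(k,B)$ of exceptional primes. For each $p\in \mathcal{E}$ the inclusion $\sA(k,2,p)_B\subseteq \sA(k,2,p)$ together with the known finiteness of $\sA(k,2,p)$ shows that only finitely many $k$-isomorphism classes occur at that $p$. Hence
\[
\sA(k,2)_B=\bigcup_{p\in\mathcal{E}}\{([A],p):[A]\in\sA(k,2,p)_B\}
\]
is a finite union of finite sets and so is finite. The only step with any real content is the first one, the passage from the pro-$p$ torsion-field condition to reducibility of $\rhob_{A,p}$; once this is in place Theorem \ref{irred} does all the heavy lifting and the rest is bookkeeping.
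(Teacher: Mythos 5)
Your proof is correct and follows essentially the same route as the paper: establish that $\rhob_{A,p}$ is reducible, invoke Theorem \ref{irred} to confine $p$ to a finite set, and then use the finiteness of each $\sA(k,2,p)$. The only difference is that where the paper simply cites \cite[Lemma 3, p.1225]{RT} for the (stronger) fact that $\rhob_{A,p}$ is conjugate to an upper-triangular form with cyclotomic powers on the diagonal, you supply a correct self-contained derivation of reducibility via the $p$-group and normality argument.
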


\begin{proof}

Let $([A],p)$ be an element of  $\sA(k,2)_B$ with $p\nmid\disc B$.
Then the Galois representation $\rhob_{A,p}$ is conjugate to the form
$\left(\begin{matrix} \theta_p^a & * \\ 0 & \theta_p^b\end{matrix}\right)$
where $\theta_p:\G_k\longrightarrow\F_p^{\times}$ is the mod $p$ cyclotomic character
(\cite[Lemma 3, p.1225]{RT};
notice that this lemma holds if $\chib$ is injective).
In particular, the representation $\rhob_{A,p}$ is reducible.
By Theorem \ref{irred},
we know that there are only finitely many such prime numbers $p$.
Since each set $\sA(k,2,p)_B$ is finite, the set $\sA(k,2)_B$ is also finite.

\end{proof}

\begin{rmk}
\label{QMdetcyclo}

In the proof of Theorem \ref{RTB}, we have
$\det\rhob_{A,p}=\theta_p$
(\cite[Proposition 1.1 (2), p.300]{Oh}).
So, in fact
$\rhob_{A,p}$ is conjugate to the form
$\left(\begin{matrix} \theta_p^a & * \\ 0 & \theta_p^{1-a}\end{matrix}\right)$
just as in the case of an elliptic curve
(\cf \cite[(6), p.1226]{RT}).

\end{rmk}

Let $\cQM$ be the set of isomorphism classes of indefinite quaternion division
algebras over $\Q$.
Define the set
$$\sA(k,2)_{\cQM}:=\bigcup_{B\in\cQM}\sA(k,2)_B,$$
%\{([A],p)\mid [A]\in\sA(k,2,p)_B\},$$
which is a subset of $\sA(k,2)$.
As a corollary of Theorem \ref{RTB}, we know the following.

\begin{cor}
\label{RTQM}

Let $k$ be an imaginary quadratic field with $h_k\geq 2$.
Then the set $\sA(k,2)_{\cQM}$ is finite.

\end{cor}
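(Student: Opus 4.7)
The plan is to reduce Corollary \ref{RTQM} to Theorem \ref{RTB} by showing that only finitely many primes $p$ can arise as the second coordinate of an element of $\sA(k,2)_{\cQM}$. Since $\sA(k,2,p)$ is finite for each $p$ (Faltings--Zarhin), and $\sA(k,2,p)_{\cQM} \subseteq \sA(k,2,p)$, it is enough to bound the set $P := \{p : \sA(k,2,p)_{\cQM} \ne \emptyset\}$. Given $([A], p) \in \sA(k,2,p)_B$ for some $B \in \cQM$, I would split into the two cases $p \nmid \disc B$ and $p \mid \disc B$ and argue in each that $p$ belongs to a finite set depending only on $k$.

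The case $p \nmid \disc B$ is handled by rerunning the proof of Theorem \ref{RTB}: by Remark \ref{QMdetcyclo} together with \cite[Lemma 3]{RT}, the representation $\rhob_{A,p}$ is reducible, and then Theorem \ref{irred}---whose exceptional set $\cN'_1(k) \cup \cN_2(k)$ depends only on $k$, not on $B$---forces $p \in \cN'_1(k) \cup \cN_2(k)$, a finite set uniform in $B$.

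The case $p \mid \disc B$ is more delicate, since $B \otimes_{\Q} \Q_p$ is now a division algebra and the $2 \times 2$ description of $\rhob_{A,p}$ used in \S\ref{QM} breaks down. Here I would use that $\cO \otimes_{\Z} \Z_p$ is the maximal order of the local quaternion division algebra and that $T_p A$ is a free rank-one module over it, so that $\rho_{A,p}$ takes values in $(\cO \otimes_{\Z} \Z_p)^{\times}$. The condition $k(A[p^{\infty}]) \subseteq \ktil_p$ places $\rho_{A,p}(\G_{k(\pmu_p)})$ inside the unique maximal pro-$p$ subgroup of $(\cO \otimes_{\Z} \Z_p)^{\times}$ (the kernel of reduction to $\F_{p^2}^{\times}$), so the residue character $\rhob_{A,p}$ factors through $\Gal(k(\pmu_p)/k) \hookrightarrow \F_p^{\times} \subseteq \F_{p^2}^{\times}$ and equals $\theta_p^c$ for some integer $c$. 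On the other hand, the Weil pairing gives $\Nrd \circ \rho_{A,p} = \theta_p$, and on residues the reduced norm is $x \mapsto x^{p+1}$, which restricts on $\F_p^{\times}$ to $x \mapsto x^2$. Combining yields $\theta_p^{2c-1} = 1$ as a character of $\G_k$. For $p > 2$ this is impossible whenever $\theta_p$ has full order $p - 1$---equivalently whenever $k \cap \Q(\pmu_p) = \Q$---because $p - 1$ is even while $2c - 1$ is odd. For $k$ imaginary quadratic, the exceptional possibility $k \subset \Q(\pmu_p)$ forces $k = \Q(\sqrt{-p})$ with $p \equiv 3 \pmod{4}$, which singles out at most one prime $p_0$ depending on $k$; together with $p = 2$ this is only finitely many primes.

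Putting the two cases together gives $P \subseteq \cN'_1(k) \cup \cN_2(k) \cup \{2, p_0\}$, which is finite, so $\sA(k,2)_{\cQM} \subseteq \bigcup_{p \in P} \{([A],p) : [A] \in \sA(k,2,p)\}$ is a finite union of finite sets. The main obstacle is the local analysis at $p \mid \disc B$: one must correctly set up $\rho_{A,p}$ in $(\cO \otimes_{\Z} \Z_p)^{\times}$, identify the maximal pro-$p$ subgroup together with the residue $\F_{p^2}^{\times}$, and match the pro-$p$ condition coming from the $\ktil_p$-hypothesis against the reduced-norm identity coming from the Weil pairing.
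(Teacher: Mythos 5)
Your proposal is correct, but it takes a genuinely different route from the paper. The paper's proof is much shorter: it observes that if $\sA(k,2)_B\ne\emptyset$ then the pair $(A,i)$ gives a point of $M^B(k)$, whence $B\otimes_{\Q}k\cong\M_2(k)$ by Jordan's rationality criterion, and then invokes Jordan's finiteness theorem \cite[Theorem 6.6, p.111]{J} to conclude that only finitely many $B\in\cQM$ can occur; the corollary then follows by applying Theorem \ref{RTB} to each of these finitely many $B$. You instead fix nothing about $B$ and bound the set of primes $p$ uniformly, which forces you to handle the case $p\mid\disc B$ --- a case the paper never confronts, since for each of its finitely many $B$'s the primes dividing $\disc B$ form a finite set already absorbed by Zarhin's finiteness of $\sA(k,2,p)$. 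Your local analysis at $p\mid\disc B$ is sound in outline (freeness of $T_pA$ of rank one over the maximal order $\cO\otimes_{\Z}\Z_p$ of the local division algebra, residue field $\F_{p^2}$ with pro-$p$ kernel of reduction, $\Nrd\circ\rho_{A,p}=\theta_p$, and the parity obstruction $\theta_p^{2c-1}=1$ with the single exceptional prime $k=\Q(\sqrt{-p})$, $p\equiv 3\bmod 4$), but each of these inputs is a nontrivial standard fact that would need a citation (e.g.\ to Ohta or Jordan) rather than being available in the paper. The trade-off: the paper's argument is shorter and leans on Jordan's global finiteness theorem as a black box, while yours avoids that theorem entirely and yields an explicit, $B$-independent bound on the admissible primes, at the cost of the extra computation at the ramified primes.
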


\begin{proof}

Let $([A],p)$ be an element of $\sA(k,2)_{\cQM}$.
Then there is an indefinite quaternion division algebra $B$ over $\Q$
such that $\End_k(A)$ contains a maximal order $\cO$ of $B$ as a subring.
If we let $i:\cO\hookrightarrow\End_k(A)$
be the inclusion, the pair $(A,i)$ is a QM-abelian surface by $\cO$ over $k$
and it determines a point of $M^B(k)$.
In this case we have
$B\otimes_{\Q}k\cong\M_2(k)$
by \cite[Theorem (1.1), p.93]{J}.
Further we find that there are only finitely many such $B$'s
by \cite[Theorem 6.6, p.111]{J}.

\end{proof}

\begin{rmk}
\label{Ozeki}

Conjecture \ref{RTconj} is solved for any $K$ and $g$
when restricted to semi-stable abelian varieties (\cite[Corollary 4.5, p.2392]{O1})
or abelian varieties
with abelian Galois representations (\cite[Theorem 1.2]{O2}).

\end{rmk}

%\begin{rmk}
%\label{RTQMempty}

%Since $M^B(\R)=\emptyset$ for any indefinite quaternion division algebra $B$
%over $\Q$, there are no QM-abelian surfaces over $\R$.
%So, for any number field $k$ with a real place,
%we have $\sA(k,2)_{\cQM}=\emptyset$.

%\end{rmk}

\def\bibname{References}

(Keisuke Arai)
Department of Mathematics, School of Engineering,
Tokyo Denki University,
5 Senju Asahi-cho, Adachi-ku, Tokyo 120-8551, Japan
%2-2 Kanda-Nishiki-cho, Chiyoda-ku, Tokyo, Japan 
%101-8457

\textit{E-mail address}: \texttt{araik@mail.dendai.ac.jp}

\vspace{3mm}

(Fumiyuki Momose)
Department of Mathematics, Faculty of Science and Engineering,
Chuo University,
1-13-27 Kasuga, Bunkyo-ku, Tokyo 112-8551, Japan

%\textit{E-mail address}: \texttt{momose@math.chuo-u.ac.jp}


\begin{thebibliography}{SGA5}

\bibitem{A} \textit{K.\ Arai},
On the Galois images associated to QM-abelian surfaces,
Proceedings of the Symposium on Algebraic
Number Theory and Related Topics, 165--187, 
RIMS K\^{o}ky\^{u}roku Bessatsu, {\bf B4}, Res. Inst. Math. Sci. (RIMS), Kyoto, 2007.

\bibitem{A2} \textit{K.\ Arai},
Galois images and modular curves,
Proceedings of the Symposium on Algebraic
Number Theory and Related Topics, 145--161, 
RIMS K\^{o}ky\^{u}roku Bessatsu, {\bf B32}, Res. Inst. Math. Sci. (RIMS), Kyoto, 2012.
%to appear in RIMS K\^{o}ky\^{u}roku Bessatsu.

\bibitem{Ba} \textit{A.\ Baker},
Linear forms in the logarithms of algebraic numbers, %I,II,III
Mathematika {\bf 13} (1966), 204--216.
%; ibid. 14 (1967), 102--107; ibid. 14 (1967) 220--228.

\bibitem{Be} \textit{A.\ Besser},
CM cycles over Shimura curves,
J. Algebraic Geom. {\bf 4} (1995), no. 4, 659--691.

\bibitem{Bu} \textit{K.\ Buzzard}, 
Integral models of certain Shimura curves, 
Duke Math. J. {\bf 87} (1997), no. 3, 591--612.

\bibitem{DD} \textit{P.\ D\`{e}bes, J.-C.\ Douai},
Algebraic covers: field of moduli versus field of definition,
Ann. Sci. Ecole Norm. Sup. (4) {\bf 30} (1997), no. 3, 303--338.

\bibitem{DR} \textit{P.\ Deligne, M.\ Rapoport},
Les sch\'{e}mas de modules de courbes elliptiques,
Modular functions of one variable, II, 
143--316. Lecture Notes in Math., Vol. {\bf 349}, Springer, Berlin, 1973.

\bibitem{Fa} \textit{G.\ Faltings},
Endlichkeitss\"{a}tze f\"{u}r abelsche Variet\"{a}ten \"{u}ber Zahlk\"{o}rpern,
Invent. Math. {\bf 73} (1983), no. 3, 349--366.

\bibitem{J} \textit{B.\ Jordan},
Points on Shimura curves rational over number fields,
J. Reine Angew. Math. {\bf 371} (1986), 92--114. 

\bibitem{K} \textit{A.\ Kurihara},
On some examples of equations defining Shimura curves
and the Mumford uniformization,
J. Fac. Sci. Univ. Tokyo Sect. IA Math. {\bf 25} (1979), no. 3, 277--300. 

%\bibitem[Ma1]{Maz1} B.\ Mazur,
%{\em Modular curves and the Eisenstein ideal}, 
%I.H.E.S. Publ. Math. No. 47 (1977), 33-186. 

%\bibitem[Maz2]{Maz2} B.\ Mazur,
%{\em Rational points on modular curves},
%Modular functions of one variable V, 
%Lecture Notes in Math., Vol. 601, Springer, Berlin (1977), 107-148.

\bibitem{Ma} \textit{B.\ Mazur},
Rational isogenies of prime degree (with an appendix by D. Goldfeld), 
Invent. Math. {\bf 44} (1978), no. 2, 129--162.

\bibitem{Mo} \textit{F.\ Momose},
Isogenies of prime degree over number fields,
Compositio Math. {\bf 97} (1995), no. 3, 329--348.

\bibitem{Mu} \textit{D.\ Mumford},
Abelian varieties,
Tata Institute of Fundamental Research Studies in Mathematics,
No. 5 Published for the Tata Institute of Fundamental Research, Bombay; Oxford University Press, London 1970.
%viii+242 pp. (Reviewer: J. S. Milne), 14.51 

%\bibitem[Mo1]{Mo1} F.\ Momose,
%{\em Rational points on the modular curves $X\sb {{\rm split}}(p)$}, 
%Compositio Math. 52 (1984), no. 1, 115--137.

%\bibitem[Mo2]{Mo2} F.\ Momose,
%{\em Rational points on the modular curves $X^+_0(p^r)$},
%J. Fac. Sci. Univ. Tokyo Sect. IA Math. 33 (1986), no. 3, 441--466. 

%\bibitem[Mo3]{Mo3} F.\ Momose,
%{\em Rational points on the modular curves $X^+_0(N)$}, 
%J. Math. Soc. Japan 39 (1987), no. 2, 269--286.

\bibitem{Oh} \textit{M.\ Ohta}, 
On $l$-adic representations of Galois groups obtained from certain 
two-dimensional abelian varieties, 
J. Fac. Sci. Univ. Tokyo Sect. IA Math. {\bf 21} (1974), 299--308.

\bibitem{O1} \textit{Y.\ Ozeki},
Non-existence of certain Galois representations with a uniform tame inertia weight,
Int. Math. Res. Not. {\bf 2011} (2011), no. 11, 2377--2395. 

\bibitem{O2} \textit{Y.\ Ozeki},
Non-existence of certain CM abelian varieties with prime power torsion,
preprint, available at the web page
(http://arxiv.org/pdf/1112.3097.pdf).

\bibitem{RT} \textit{C.\ Rasmussen, A.\ Tamagawa},
A finiteness conjecture on abelian varieties with constrained prime power torsion,
Math. Res. Lett. {\bf 15} (2008), no. 6, 1223--1231. 

\bibitem{Ray} \textit{M.\ Raynaud},
Sch\'{e}mas en groupes de type $(p,\dots, p)$,
Bull. Soc. Math. France {\bf 102} (1974), 241--280.

\bibitem{Se} \textit{J.-P.\ Serre},
Propri\'{e}t\'{e}s galoisiennes des points d'ordre fini des 
courbes elliptiques, 
Invent. Math. {\bf 15} (1972), no. 4, 259--331. 

%\bibitem[Si]{Si} J.\ Silverman,
%{\em Advanced topics in the arithmetic of elliptic curves},
%Graduate Texts in Mathematics, 151. Springer-Verlag, New York, 1994.

\bibitem{Shimizu} \textit{H.\ Shimizu}, 
Hokei kans\={u}. I--III (Japanese) [Automorphic functions. I--III] 
Second edition.
Iwanami Shoten Kiso S\={u}gaku [Iwanami Lectures on Fundamental Mathematics], 
8. Dais\={u} [Algebra], vii. Iwanami Shoten,
Tokyo, 1984. 

\bibitem{Sh0} \textit{G.\ Shimura},
Introduction to the arithmetic theory of automorphic functions,
Kano Memorial Lectures, No. 1. Publications of the Mathematical Society of Japan, No. 11.
Iwanami Shoten, Publishers, Tokyo; Princeton University Press, Princeton, N.J., 1971.

\bibitem{Sh} \textit{G.\ Shimura},
On the real points of an arithmetic quotient of a bounded symmetric domain,
Math. Ann. {\bf 215} (1975), 135--164.

\bibitem{Si} \textit{J.\ Silverman},
The arithmetic of elliptic curves.
Second edition. Graduate Texts in Mathematics, 106. Springer, Dordrecht, 2009.

\bibitem{St} \textit{H.\ M.\ Stark},
A complete determination of the complex quadratic fields of class-number one,
Michigan Math. J. {\bf 14} (1967) 1--27.

\bibitem{We} \textit{A.\ Weil},
Basic number theory,
Reprint of the second (1973) edition. Classics in Mathematics. Springer-Verlag, Berlin, 1995.

\bibitem{Z} \textit{Y.\ G.\ Zarhin},
A finiteness theorem for unpolarized abelian varieties over number fields
with prescribed places of bad reduction,
Invent. Math. {\bf 79} (1985), no. 2, 309--321.

\end{thebibliography}
\end{document}